\newtheorem{thm}{Theorem}
\newtheorem{lem}[thm]{Lemma}
\newtheorem{cor}[thm]{Corollary}
\theoremstyle{definition}
\newtheorem*{defn}{Definition}
\newtheorem{exa}{Example}
\newcommand{\IR}{{\mathbb R}}
\newcommand{\IC}{{\mathbb C}}
\newcommand{\IT}{{\mathbb T}}
\newcommand{\IF}{{\mathbb F}}
\newcommand{\IN}{{\mathbb N}}
\newcommand{\IZ}{{\mathbb Z}}
\newcommand{\IM}{{\mathbb M}}
\newcommand{\IB}{{\mathbb B}}
\newcommand{\cA}{{\mathcal A}}
\newcommand{\cB}{{\mathcal B}}
\newcommand{\cH}{{\mathcal H}}
\newcommand{\cQ}{{\mathcal Q}}
\newcommand{\cS}{{\mathcal S}}
\newcommand{\cT}{{\mathcal T}}
\newcommand{\fF}{{\mathfrak F}_m^d}
\newcommand{\ii}{{\mathbbm i}}
\newcommand{\ik}{{\mathbbm k}}
\newcommand{\rd}{\mathrm{d}}
\newcommand{\re}{\mathrm{e}}
\newcommand{\rh}{\mathrm{h}}
\newcommand{\rr}{\mathrm{r}}
\newcommand{\ru}{\mathrm{u}}
\newcommand{\G}{\Gamma}
\newcommand{\e}{\varepsilon}
\newcommand{\p}{\varphi}
\DeclareMathOperator*{\supp}{supp}
\DeclareMathOperator*{\lh}{span}
\DeclareMathOperator*{\diag}{diag}
\DeclareMathOperator*{\arch}{arch}
\DeclareMathOperator*{\md}{md}
\DeclareMathOperator*{\Tr}{Tr}
\DeclareMathOperator*{\tr}{tr}
\newcommand{\id}{\mathrm{id}}
\newcommand{\op}{\mathrm{op}}
\newcommand{\bubble}{0}
\newcommand{\Hil}{H}
\newcommand{\Hilk}{K}
\newcommand{\fin}{\mathrm{fin}}
\newcommand{\ip}[1]{\mathopen{\langle}#1\mathclose{\rangle}}
\newcommand{\mb}[1]{\left[ #1 \right]}
\newcommand{\questionbox}{\smash{\text{\rlap{\scriptsize\hspace*{3.4pt}\raisebox{1.6pt}{$?$}}\large$\Box$}}}
\newcommand{\shom}{$*$-homo\-mor\-phism\xspace}
\newcommand{\srep}{$*$-repre\-sen\-ta\-tion\xspace}
\newcommand{\siso}{$*$-iso\-mor\-phism\xspace}
\newcommand{\tpr}{$\tau$-pre\-serv\-ing\xspace}
\newcommand{\spc}{semi-pre-$\mathrm{C}^*$-alge\-bra\xspace}
\newcommand{\ca}{$\mathrm{C}^*$-alge\-bra\xspace}
\newcommand{\pstz}{Positiv\-stellen\-s\"atze\xspace}
\newcommand{\interitemtext}[1]{%
\begin{list}{}
{\itemindent=0mm\labelsep=0mm
\labelwidth=0mm\leftmargin=0mm
\addtolength{\leftmargin}{-\@totalleftmargin}}
\item #1
\end{list}}
\title[About the Connes Embedding Conjecture]{About the Connes Embedding Conjecture\\
\lowercase{---algebraic approaches---}}
\author{Narutaka Ozawa}
\address{RIMS, Kyoto University, \mbox{606-8502}}
\email{narutaka@kurims.kyoto-u.ac.jp}
\thanks{Partially supported by JSPS (23540233) and by
the Danish National Research Foundation (DNRF) through the
Centre for Symmetry and Deformation}
\subjclass{16W80; 46L89, 81P15}
\keywords{Connes Embedding Conjecture, Kirchberg's Conjecture, Tsirelson's Problem,
semi-pre-C$^*$-algebras, noncommutative real algebraic geometry}
\date{\today}
\begin{document}
%
%
\maketitle
\subsection{Introduction.}
The Connes Embedding Conjecture (\cite{connes}) is considered as one of the most important
open problems in the field of operator algebras. It asserts that any finite von Neumann
algebra is approximable by matrix algebras in a suitable sense. It turns out, most notably by
Kirchberg's seminal work (\cite{kirchberg}), that the Connes Embedding Conjecture is
equivalent to a variety of other important conjectures, which touches most of the subfields
of operator algebras, and also some other branches of mathematics such as noncommutative real
algebraic geometry (\cite{schmudgen}) and quantum information theory. In this note, we look
at the algebraic aspects of this conjecture. (See \cite{bo,kirchberg,qwep} for the analytic aspects.)
This leads to a study of the $\mathrm{C}^*$-alge\-braic aspect of noncommutative real algebraic
geometry in terms of \spc{}s.
Specifically, we treat some easy parts of \pstz of Putinar (\cite{putinar}),
Helton--McCullough (\cite{hm}), and Schm\"udgen--Bakonyi--Timotin (\cite{bt}).
We then treat their tracial analogue by Klep--Schweighofer (\cite{ks}),
which is equivalent to the Connes Embedding Conjecture.
We give new proofs of Kirchberg's theorems on the tensor product
$\mathrm{C}^*\IF_d\otimes\IB(\ell_2)$ and on the equivalence between
the Connes Embedding Conjecture and Kirchberg's Conjecture.
We also look at Tsirelson's Problem in quantum information theory
(\cite{fritz,jungeetal,tsirelson}), and prove it is again equivalent to the Connes Embedding Conjecture.
This paper is an expanded lecture note for the author's lecture for ``Masterclass on sofic groups
and applications to operator algebras" (University of Copenhagen, 5--9 November 2012).
The author gratefully acknowledges the kind hospitality provided by
University of Copenhagen during his stay in Fall 2012.
He also would like to thank Professor Andreas Thom for
valuable comments on this note.

%
\subsection{Ground assumption}
We deal with unital $*$-algebras over $\ik\in\{\IC,\IR\}$, and
every algebra is assumed to be unital, unless it is clearly not so.
The unit of an algebra is simply denoted by $1$ and all homomorphisms
and representations between algebras are assumed to preserve the units.
We denote by $\ii$ the imaginary unit, and by $\lambda^*$ the complex conjugate
of $\lambda\in\IC$.
In case $\ik=\IR$, one has $\lambda^*=\lambda$ for all $\lambda\in\ik$.
\subsection{Semi-pre-$\mathrm{C}^*$-algebras}
We will give the definition and examples of \spc{}s.
Recall that a unital algebra $\cA$ is called a \emph{$*$-algebra}
if it is equipped with a map $x\mapsto x^*$ satisfying the following properties:
\begin{enumerate}[(i)]
\item
$1^*=1$ and $(x^*)^*=x$ for every $x \in \cA$;
\item
$(xy)^*=y^*x^*$ for every $x,y \in \cA$;
\item
$(\lambda x + y)^*=\lambda^*x^*+y^*$ for every $x,y \in \cA$ and $\lambda\in\ik$.
\end{enumerate}
The sets of hermitian elements and unitary (orthogonal) elements are written respectively as
\[
\cA_\rh := \{ a\in \cA : a^*=a \}
\ \mbox{ and }\
\cA_\ru := \{ u\in \cA : u^*u=1=uu^*\}.
\]
Every element $x\in\cA$ decomposes uniquely as a sum $x=a+b$ of
an hermitian element $a$ and a skew-hermitian element $b$.
The set of hermitian elements is an $\IR$-vector space.
We say a linear map $\p$ between $*$-spaces is
\emph{self-adjoint} if $\p^*=\p$, where $\p^*$ is
defined by $\p^*(x)=\p(x^*)^*$.
We call a subset $\cA_+\subset\cA_\rh$ a \emph{$*$-positive cone}
(commonly known as a \emph{quadratic module}) if it satisfies the following:
\begin{enumerate}[(i)]
\item
$\IR_{\geq0}1\subset\cA_+$ and
$\lambda a + b \in \cA_+$ for every $a,b \in \cA_+$ and $\lambda\in\IR_{\geq0}$;
\item
$x^*ax\in\cA_+$ for every $a \in \cA_+$ and $x \in \cA$.
\end{enumerate}
For $a,b\in\cA_\rh$, we write $a\le b$ if $b-a\in\cA_+$.
We say a linear map $\p$ between spaces with positivity
is \emph{positive} if it sends positive elements to positive elements
(and often it is also required self-adjoint),
and a positive linear map $\p$ is \emph{faithful} if $a\geq0$ and $\p(a)=0$ implies $a=0$.
Given a $*$-positive cone $\cA_+$, we define the $*$-subalgebra of bounded elements by
\[
\cA^{\mathrm{bdd}}=\{ x\in \cA : \exists R>0\mbox{ such that }x^*x\le R1\}.
\]
This is indeed a $*$-subalgebra of $\cA$. 
For example, if $x$ is bounded and $x^*x\le R1$, then $x^*$ is also
bounded and $xx^*\le R1$, because
\[
0 \le R^{-1}(R1-xx^*)^2 = R1-2xx^*+R^{-1}x(x^*x)x^*\le R 1- xx^*.
\]
Thus, if $\cA$ is generated (as a $*$-algebra) by $\cS$,
then $\cS\subset\cA^{\mathrm{bdd}}$ implies $\cA=\cA^{\mathrm{bdd}}$.

\begin{defn}
A unital $*$-algebra $\cA$ is called a \emph{semi-pre-$\mathrm{C}^*$-algebra}
if it comes together with a $*$-positive cone $\cA_+$ satisfying
the Combes axiom (also called the \emph{archimedean property}) that $\cA=\cA^{\mathrm{bdd}}$.
\end{defn}

Since $h\le (1+h^2)/2$ for $h\in\cA_\rh$, one has $\cA_\rh=\cA_+-\cA_+$ for
a \spc.
We define the \emph{ideal of infinitesimal elements} by
\[
I(\cA)=\{ x\in\cA : x^*x\le\e1\mbox{ for all }\e>0\}
\]
and the \emph{archimedean closure} of the $*$-positive cone $\cA_+$ (or any other cone) by
\[
\arch(\cA_+)=\{ a\in\cA_\rh : a+\e1\in\cA_+\mbox{ for all }\e>0\}.
\]
The cone $\cA_+$ is said to be \emph{archimedean closed} if $\cA_+=\arch(\cA_+)$.
A \ca $A$ is of course a \spc,
with a zero infinitesimal ideal and an archimedean closed $*$-positive cone
\[
A_+=\{ x^*x : x\in A\}.
\]
If $A\subset\IB(\Hil)$ (here $\IB(\Hil)$ denotes the \ca
of the bounded linear operators on a Hilbert space $\Hil$ over $\ik$),
then one also has
\[
A_+=\{ a\in A_\rh : \ip{a\xi,\xi}\geq0\mbox{ for all }\xi\in\Hil\}.
\]
Note that the condition $a$ being hermitian can not be dropped when $\ik=\IR$.
It will be shown (Theorem~\ref{thm:fundamental}) that if $\cA$ is a \spc,
then $\cA/I(\cA)$ is a pre-\ca with a $*$-positive
cone $\arch(\cA_+)$.

\begin{defn}
We define the \emph{universal} \emph{$\mathrm{C}^*$-algebra}
of a \spc $\cA$
as the \ca $\mathrm{C}^*_\ru(\cA)$ together with a positive \shom
$\iota\colon\cA\to\mathrm{C}^*_\ru(\cA)$ which satisfies the following properties:
$\iota(\cA)$ is dense in $\mathrm{C}^*_\ru(\cA)$ and
every positive \srep $\pi$ of $\cA$ on a Hilbert space $\Hil$ extends
to a \srep $\bar{\pi}\colon\mathrm{C}^*_\ru(\cA)\to\IB(\Hil)$,
i.e., $\pi=\bar{\pi}\circ\iota$.
In other words, $\mathrm{C}^*_\ru(\cA)$ is the separation and completion
of $\cA$ under the $\mathrm{C}^*$-semi-norm
\[
\sup\{ \|\pi(a)\|_{\IB(\Hil)} :
\pi\mbox{ a positive \srep on a Hilbert space $\Hil$}\}.
\]
(We may restrict the dimension of $\Hil$ by the cardinality of $\cA$.)
\end{defn}
We emphasize that only \emph{positive} \srep{}s are considered.
Every positive \shom between \spc{}s
extends to a positive \shom between their universal \ca{}s.
It may happen that $\cA_+=\cA_\rh$ and $\mathrm{C}^*_\ru(\cA)=\{0\}$,
which is still considered as a unital (?) \ca.
Every \shom between \ca{}s is automatically positive, has a norm-closed range,
and maps the positive cone onto the positive cone of the range.
However, this is not at all the case for \spc{}s, as we will exhibit
a prototypical example in Example~\ref{exa:grpalg}.
On the other hand, we note that if $\cA$ is a norm-dense $*$-subalgebra of
a \ca $A$ such that $\arch(\cA_+)=\cA\cap A_+$, then every
positive \srep of $\cA$ extends to a \srep of $A$,
i.e., $A=\mathrm{C}^*_\ru(\cA)$.
(Indeed, if $x\in\cA$ has $\|x\|_A<1$, then $1-x^*x\in\cA_+$ and hence
$\|\pi(x)\|<1$ for any positive \srep $\pi$ of $\cA$.)
It should be easy to see that the following examples satisfy the axiom
of \spc{}s.

\begin{exa}\label{exa:grpalg}
Let $\G$ be a discrete group and $\ik[\G]$ be its group algebra over $\ik$:
\[
(f*g)(s)=\sum_{t\in\G} f(st^{-1})g(t)
\ \mbox{ and }\
f^*(s)=f(s^{-1})^*
\ \mbox{ for }\
f,g\in\ik[\G].
\]
The canonical $*$-positive cone of $\ik[\G]$ is defined as the sums of hermitian squares,
\[
\ik[\G]_+=\{ \sum_{i=1}^n \xi_i^**\xi_i : n\in\IN,\ \xi_i\in \ik[\G]\}.
\]
Then, $\ik[\G]$ is a \spc such that $\mathrm{C}^*_\ru(\ik[\G])=\mathrm{C}^*\G$, the full group \ca of $\G$,
which is the universal \ca generated by the unitary representations of $\G$.
There is another group \ca.
Recall that the left regular representation $\lambda$ of $\G$ on $\ell_2\G$ is defined by
$\lambda(s)\delta_t=\delta_{st}$ for $s,t\in\G$, or equivalently by $\lambda(f)\xi=f*\xi$ for
$f\in\ik[\G]$ and $\xi\in\ell_2\G$. The reduced group \ca $\mathrm{C}^*_\rr\G$ of $\G$ is
the \ca obtained as the norm-closure of $\lambda(\ik[\G])$ in $\IB(\ell_2\G)$.
The group algebra $\ik[\G]$ is equipped with the corresponding $*$-positive cone
\begin{align*}
\ik_\rr[\G]_+
 &=\{ f\in\ik[\G] : \exists f_n\in\ik[\G]_+\mbox{ such that }f_n\to f\mbox{ pointwise}\}\\
 &=\{ f\in\ik[\G] : \mbox{$f$ is of positive type}\},
\end{align*}
and the resultant \spc $\ik_\rr[\G]$ satisfies
$\mathrm{C}^*_\ru(\ik_\rr[\G])=\mathrm{C}^*_\rr\G$.
Indeed, if $f\in \ik[\G]\cap\lambda^{-1}(\mathrm{C}^*_\rr\G_+)$, then for
$\xi=\lambda(f)^{1/2}\delta_1\in\ell_2\G$, one has $f=\xi^**\xi$ and
$f$ is the pointwise limit of $\xi_n^**\xi_n \in \ik[\G]_+$,
where $\xi_n\in\ik[\G]$ are such that $\|\xi_n-\xi\|_2\to0$.
On the other hand, if $f$ is of positive type (i.e., the kernel $(x,y)\mapsto f(x^{-1}y)$ is positive semidefinite),
then $f=f^*$ and $\ip{\lambda(f)\eta,\eta}\geq0$ for every $\eta\in\ell_2\G$,
which implies $\lambda(f)\in\mathrm{C}^*_\rr\G_+$.
It follows that $\ik_\rr[\G]_+=\arch(\ik[\G]_+)$ if and only if $\G$ is amenable
(see Theorem~\ref{thm:fundamental}).
\end{exa}

\begin{exa}\label{exa:cp}
The $*$-algebra $\ik[x_1,\ldots,x_d]$ of polynomials in
$d$ commuting hermitian variables $x_1,\ldots,x_d$ is
a \spc, equipped with the $*$-positive cone
\[
\ik[x_1,\ldots,x_d]_+=\mbox{$*$-positive cone generated by }\{ 1 - x_i^2 : i=1,\ldots,d\}.
\]
One has $\mathrm{C}^*_\ru(\ik[x_1,\ldots,x_d])=C([-1,1]^d)$,
the algebra of the continuous functions on $[-1,1]^d$, and $x_i$ is identified
with the $i$-th coordinate projection.
\end{exa}

\begin{exa}\label{exa:ncp}
The $*$-algebra $\ik\langle x_1,\ldots,x_d\rangle$ of polynomials in
$d$ non-commuting hermitian variables $x_1,\ldots,x_d$ is
a \spc, equipped with the $*$-positive cone
\[
\ik\langle x_1,\ldots,x_d\rangle_+=\mbox{$*$-positive cone generated by }\{ 1 - x_i^2 : i=1,\ldots,d\}.
\]
One has $\mathrm{C}^*_\ru(\ik\langle x_1,\ldots,x_d\rangle)=C([-1,1])*\cdots*C([-1,1])$,
the unital full free product of $d$-copies of $C([-1,1])$.
\end{exa}

\begin{exa}\label{exa:tensor}
Let $\cA$ and $\cB$ be \spc{}s. We denote by
$\cA\otimes\cB$ the algebraic tensor product over $\ik$.
There are two standard ways to make $\cA\otimes\cB$ into a \spc.
The first one, called the maximal tensor product and denoted by $\cA\otimes^{\max}\cB$,
is $\cA\otimes\cB$ equipped with
\[
(\cA\otimes^{\max}\cB)_+=\mbox{$*$-positive cone generated by }\{ a\otimes b : a\in\cA_+,\ b\in\cB_+\}.
\]
The second one, called the minimal tensor product and denoted by $\cA\otimes^{\min}\cB$,
is $\cA\otimes\cB$ equipped with
\[
(\cA\otimes^{\min}\cB)_+=(\cA\otimes\cB)_\rh \cap
 (\iota_A\otimes\iota_B)^{-1}((\mathrm{C}^*_\ru(\cA)\otimes_{\min}\mathrm{C}^*_\ru(\cB))_+).
\]
(See Theorem~\ref{thm:tensorize} for a ``better" description.)
One has $\mathrm{C}^*_\ru(\cA\otimes^{\alpha}\cB)=\mathrm{C}^*_\ru(\cA)\otimes_{\alpha}\mathrm{C}^*_\ru(\cB)$
for $\alpha\in\{\max,\min\}$. The right hand side is
the \ca maximal (resp.\ minimal) tensor product (see \cite{bo,pisier}).
For $\cA_1\subset\cA_2$ and $\cB_1\subset\cB_2$, one has
\[
(\cA_1\otimes^{\min}\cB_1)_+ = (\cA_1\otimes^{\min}\cB_1) \cap (\cA_2\otimes^{\min}\cB_2)_+,
\]
but the similar identity need not hold for the maximal tensor product.
\end{exa}

\begin{exa}\label{exa:freeprod}
The unital algebraic free product $\cA*\cB$
of \spc{}s $\cA$ and $\cB$, equipped with
\[
(\cA*\cB)_+=\mbox{$*$-positive cone generated by }(\cA_+\cup \cB_+),
\]
is a \spc, and
$\mathrm{C}^*_\ru(\cA*\cB)=\mathrm{C}^*_\ru(\cA)*\mathrm{C}^*_\ru(\cB)$,
the unital full free product of the \ca{}s $\mathrm{C}^*_\ru(\cA)$ and $\mathrm{C}^*_\ru(\cB)$.
\end{exa}
The following is very basic
(cf.\ \cite{cimpric} and Proposition 15 in \cite{schmudgen}).

\begin{thm}\label{thm:fundamental}
Let $\cA$ be a \spc and $\iota\colon \cA\to \mathrm{C}^*_\ru(\cA)$
be the universal \ca of $\cA$.
Then, one has the following.
\begin{enumerate}[$\bullet$]
\item
$\ker\iota=I(\cA)$, the ideal of the infinitesimal elements.
\item
$\cA_\rh\cap\iota^{-1}(\mathrm{C}^*_\ru(\cA)_+)=\arch(\cA_+)$,
the archimedean closure of $\cA_+$.
\end{enumerate}
\end{thm}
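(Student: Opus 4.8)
The plan is to reduce everything to the second bullet and, within it, to its nontrivial inclusion. Two inclusions are immediate. If $x\in I(\cA)$, then for every positive \srep $\pi$ on $\Hil$ one has $\pi(x)^*\pi(x)=\pi(x^*x)\le\e I$ for all $\e>0$ (because $\e1-x^*x\in\cA_+$ and $\pi$ is positive), whence $\pi(x)=0$; as this holds for all $\pi$ and $\iota(x)=0$ iff $\pi(x)=0$ for every positive \srep, we get $I(\cA)\subseteq\ker\iota$. Likewise, since $\iota$ is positive, $a\in\arch(\cA_+)$ gives $\iota(a)+\e1=\iota(a+\e1)\in\mathrm{C}^*_\ru(\cA)_+$ for all $\e>0$, and the positive cone of a \ca is norm-closed, so $\iota(a)\ge0$; thus $\arch(\cA_+)\subseteq\cA_\rh\cap\iota^{-1}(\mathrm{C}^*_\ru(\cA)_+)$. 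The remaining inclusion of the first bullet follows from the second: if $\iota(x)=0$ then $\iota(-x^*x)=-\iota(x)^*\iota(x)=0\in\mathrm{C}^*_\ru(\cA)_+$, so by the second bullet $-x^*x\in\arch(\cA_+)$, which is exactly $x\in I(\cA)$. Hence it suffices to prove: if $a\in\cA_\rh$ and $a\notin\arch(\cA_+)$, then $\iota(a)\not\ge0$.

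For this I would produce a positive \srep detecting the failure of positivity. Fix $a\notin\arch(\cA_+)$, so $a+\e_01\notin\cA_+$ for some $\e_0>0$. The key structural fact, supplied by the archimedean axiom, is that $1$ is an order unit for $\cA_\rh$: for hermitian $h$ with $h^2\le R\,1$ one has the identity
\[
\sqrt R\,1-h=\tfrac{1}{2\sqrt R}\bigl(\sqrt R\,1-h\bigr)^2+\tfrac{1}{2\sqrt R}\bigl(R\,1-h^2\bigr)\in\cA_+,
\]
and symmetrically for $\sqrt R\,1+h$, so that $-\sqrt R\,1\le h\le\sqrt R\,1$. Consequently $1$ is an algebraic interior (core) point of the convex cone $\cA_+\subseteq\cA_\rh$. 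Since $a+\e_01\notin\cA_+$, the algebraic Hahn--Banach separation theorem yields a nonzero $\IR$-linear functional $\psi$ on $\cA_\rh$ with $\psi\ge0$ on $\cA_+$ and $\psi(a+\e_01)\le0$; the interior point forces $\psi(1)>0$, and after normalizing $\psi(1)=1$ we obtain $\psi(a)\le-\e_0<0$. Extending $\psi$ to a $\ik$-linear functional $\p$ on $\cA$ by $\p(h+\ii k)=\psi(h)+\ii\psi(k)$ for $h,k\in\cA_\rh$ produces a state: $\p(x^*)=\overline{\p(x)}$, and $\p(x^*x)=\psi(x^*x)\ge0$ since $x^*x\in\cA_+$.

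Finally I would run the GNS construction on $\p$. The form $\ip{x,y}=\p(y^*x)$ on $\cA$ is positive semidefinite, so after dividing out the kernel $N=\{x:\p(x^*x)=0\}$ and completing one obtains a Hilbert space $\Hil_\p$ with distinguished vector $\xi_\p=1+N$. For $b\in\cA$ choose $C>0$ with $b^*b\le C\,1$; then $x^*b^*bx\le C\,x^*x$ for all $x$, hence $\p\bigl((bx)^*(bx)\bigr)\le C\,\p(x^*x)$, which shows $N$ is a left ideal and that $\pi_\p(b)(x+N)=bx+N$ defines a bounded operator, so $\pi_\p$ is a \srep. It is positive, since for $c\in\cA_+$ and any $x$ one has $\ip{\pi_\p(c)(x+N),x+N}=\p(x^*cx)\ge0$ because $x^*cx\in\cA_+$. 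As $\ip{\pi_\p(a)\xi_\p,\xi_\p}=\p(a)<0$, we get $\pi_\p(a)\not\ge0$; writing $\pi_\p=\bar\pi_\p\circ\iota$ via the universal property and using that the \srep $\bar\pi_\p$ of $\mathrm{C}^*_\ru(\cA)$ preserves positivity, we conclude $\iota(a)\not\ge0$, as required. The main obstacle is the separation step: coaxing the archimedean axiom into exhibiting $1$ as an interior point of $\cA_+$ through the completing-the-square identity, and then complexifying the separating functional correctly so that it is a genuine state and GNS applies.
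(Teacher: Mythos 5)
Your proof is correct and follows essentially the same route as the paper: the nontrivial inclusion is obtained by separating $a+\e_01$ from $\cA_+$ with the Eidelheit--Kakutani theorem (using the Combes axiom to make $1$ an algebraic interior point, which is exactly the paper's Corollary~\ref{cor:hb}) and then feeding the resulting state into the GNS construction. The only organizational difference is that the paper proves the first bullet directly via the stronger norm identity $\|\iota(x)\|=\inf\{R>0: R^21-x^*x\in\cA_+\}$ and declares the second ``very similar,'' whereas you prove the second bullet and deduce the first from it via $\iota(x)=0\Rightarrow -x^*x\in\arch(\cA_+)$ --- a tidy but inessential rearrangement.
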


Although it follows from the above theorem, we give here a direct proof of the fact that
$\arch(\cA_+)\cap (-\arch(\cA_+)) \subset I(\cA)$.
Indeed, if $h^2\le 1$ and $-\e1 < h <\e1$ for $\e\in(0,1)$, then one has
\[
0\le (1+h)(\e-h)(1+h) = \e(1+h)^2 -h -h(2+h)h
\le (4\e + \e)1 -(2-\e)h^2,
\]
which implies $h^2 < 5\e1$.
We postpone the proof and give corollaries to this theorem.

\subsection{Positivstellens\"atze}
We give a few results which say if an element $a$ is positive
in a certain class of representations, then it is positive for an obvious reason.
Such results are referred to as ``\pstz."
Recall that a \ca $A$ is said to be \emph{residually finite dimensional}
(RFD) if finite-dimensional \srep{}s separate
the elements of $A$, i.e., $\pi(a)\geq0$ for all finite-dimensional
\srep{}s $\pi$ implies $a\geq0$ in $A$.
All abelian \ca{}s and full group \ca{}s of
residually finite amenable groups are RFD.
Moreover, it is a well-known result of Choi that the full group \ca $\mathrm{C}^*\IF_d$
of the free group $\IF_d$ of rank $d$ is RFD (see Theorem~\ref{thm:schmudgen}).
In fact, finite representations (i.e., the unitary representations $\pi$
such that $\pi(\IF_d)$ is finite) separate the elements of $\mathrm{C}^*\IF_d$ (\cite{ls}).
However, we note that the full group \ca of a residually finite group
need not be RFD (\cite{bekka}).
We also note that the unital full free products of RFD \ca{}s is
again RFD (\cite{el}). 
In particular, $\mathrm{C}^*_\ru(\ik\langle x_1,\ldots,x_d\rangle)$ is RFD.
The results mentioned here have been proven for complex \ca{}s,
but they are equally valid for real cases. See Section~\ref{sec:realvscomplex}.
Theorem~\ref{thm:fundamental}, when combined with residual finite dimensionality, immediately
implies the following \pstz (cf.\ \cite{putinar,hm}).
\begin{cor}\label{cor:poss}
The following are true.
\begin{enumerate}[$\bullet$]
\item
Let $f\in\ik[\G]_\rh$. Then, $\pi(f)\geq0$ for every unitary representation $\pi$
if and only if $f\in\arch(\ik[\G]_+)$.
\item
The full group \ca $\mathrm{C}^*\G$ of a group $\G$
is RFD if and only if the following statement holds.
If $f\in\ik[\G]_\rh$ is such that $\pi(f)\geq0$ for every finite-dimensional
unitary representation $\pi$, then $f\in\arch(\ik[\G]_+)$.
\item
Let $f\in\ik[x_1,\ldots,x_d]_\rh$. Then, $f(t_1,\ldots,t_d)\geq0$ for
all $(t_1,\ldots,t_d)\in[0,1]^d$ if and only if
$f\in\arch(\ik[x_1,\ldots,x_d]_+)$. (See Example~\ref{exa:cp}.)
\item
Let $f\in\ik\langle x_1,\ldots,x_d\rangle_\rh$. Then, $f(X_1,\ldots,X_d)\geq0$ for
all contractive hermitian matrices $X_1,\ldots,X_d$ if and only if
$f\in \arch(\ik\langle x_1,\ldots,x_d\rangle_+)$.
(See Example~\ref{exa:ncp}.)
\end{enumerate}
\end{cor}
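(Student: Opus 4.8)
The four statements share one mechanism, so the plan is to isolate it and then read off each case. By Theorem~\ref{thm:fundamental}, for any \spc $\cA$ and any $f\in\cA_\rh$ one has $f\in\arch(\cA_+)$ if and only if $\iota(f)\in\mathrm{C}^*_\ru(\cA)_+$. Each asserted Positivstellensatz therefore reduces to matching positivity of $\iota(f)$ in the ambient \ca with positivity of $f$ under the indicated class of representations. One direction is automatic: if $\iota(f)\geq0$, then $\pi(f)=\bar\pi(\iota(f))\geq0$ for every positive \srep $\pi$, since each such $\pi$ factors through $\iota$ and \shom{}s between \ca{}s are positive. The content is always the converse, and this is where residual finite dimensionality enters.

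For the first bullet I would observe that the positive \srep{}s of $\ik[\G]$ are exactly the unitary representations of $\G$: the generators $\delta_s$ are unitary, the canonical cone consists of sums of hermitian squares so that every \srep is automatically positive, and $\mathrm{C}^*_\ru(\ik[\G])=\mathrm{C}^*\G$ by Example~\ref{exa:grpalg}. Some unitary representation of $\G$ extends to a faithful representation of $\mathrm{C}^*\G$, and applying the hypothesis $\pi(f)\geq0$ to it forces $\iota(f)\geq0$, since positivity is reflected by a faithful representation; Theorem~\ref{thm:fundamental} then closes the case.

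The second bullet is the equivalence with RFD. By the definition recalled before the corollary, $\mathrm{C}^*\G$ is RFD precisely when finite-dimensional representations detect positivity in $(\mathrm{C}^*\G)_\rh$; together with the first bullet this gives the stated implication at once. For the converse I would argue by approximation: given $a\in(\mathrm{C}^*\G)_\rh$ with $\pi(a)\geq0$ for all finite-dimensional $\pi$, pick $f\in\ik[\G]_\rh$ with $\|a-\iota(f)\|<\e$, which is possible since $\iota(\ik[\G])$ is dense. Then $\pi(f+\e1)\geq0$ for every finite-dimensional $\pi$, so the hypothesis gives $f+\e1\in\arch(\ik[\G]_+)$, whence $\iota(f)\geq-\e1$ and $a\geq-2\e1$; letting $\e\to0$ yields $a\geq0$, so $\mathrm{C}^*\G$ is RFD.

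For the last two bullets the representations are read off from Examples~\ref{exa:cp} and~\ref{exa:ncp}. In the commuting case $\mathrm{C}^*_\ru=C([-1,1]^d)$, whose finite-dimensional representations are direct sums of point evaluations, so positivity under all of them is pointwise positivity on the cube; as abelian \ca{}s are RFD, this coincides with $\iota(f)\geq0$. In the non-commuting case $\mathrm{C}^*_\ru$ is the free product of $d$ copies of $C([-1,1])$, whose finite-dimensional representations are exactly the $d$-tuples of contractive hermitian matrices---one contractive hermitian operator per free factor---and since unital full free products of RFD \ca{}s are again RFD, positivity over all such tuples likewise coincides with $\iota(f)\geq0$. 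In every case Theorem~\ref{thm:fundamental} converts the resulting positivity of $\iota(f)$ into membership in $\arch(\cA_+)$. The genuine obstacle throughout is this passage from finite-dimensional to global positivity, i.e.\ residual finite dimensionality itself: automatic for abelian \ca{}s, but for the free case resting on the permanence of RFD under free products recalled above; the only hands-on step is the $\e$-perturbation bookkeeping in the second bullet.
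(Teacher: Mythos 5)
Your argument is correct and is exactly the one the paper intends: the paper gives no written proof, stating only that the corollary follows immediately from Theorem~\ref{thm:fundamental} combined with the residual finite dimensionality facts recalled just before it, and your proposal supplies precisely those details (identification of the universal \ca{}s via Examples~\ref{exa:grpalg}--\ref{exa:ncp}, RFD of abelian algebras and of free products, plus the small $\e$-perturbation argument for the converse in the second bullet). Your reading of the third bullet as positivity on $[-1,1]^d$, rather than the $[0,1]^d$ printed in the statement, is the right one, since the cone in Example~\ref{exa:cp} is generated by the elements $1-x_i^2$.
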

In some cases, the $*$-positive cones are already archimedean closed.
We will see later (Theorem~\ref{thm:schmudgen}) this phenomenon for the free group algebras $\ik[\IF_d]$.
\subsection{Eidelheit--Kakutani Separation Theorem}
The most basic tool in functional analysis is the Hahn--Banach theorem.
In this note, we will need an algebraic form of it, the Eidelheit--Kakutani Separation Theorem.
We recall the \emph{algebraic topology} on an $\IR$-vector space $V$.
Let $C\subset V$ be a convex subset. An element $c\in C$ is called
an \emph{algebraic interior} point of $C$
if for every $v\in V$ there is $\e>0$ such that $c+\lambda v\in C$ for all $|\lambda|<\e$.
The convex cone $C$ is said to be \emph{algebraically solid} if
the set $C^\circ$ of algebraic interior points of $C$ is non-empty.
Notice that for every $c\in C^\circ$ and $x\in C$, one has
$\lambda c + (1-\lambda)x \in C^\circ$ for every $\lambda\in(0,1]$.
In particular, $C^{\circ\circ}=C^\circ$ for every convex subset $C$.
We can equip $V$ with a locally convex topology, called the \emph{algebraic topology},
by declaring that any convex set that coincides with its algebraic interior is open.
Then, every linear functional on $V$ is continuous with respect to the algebraic topology.
Now Hahn--Banach separation theorem reads as follows.
\begin{thm}[Eidelheit--Kakutani (\cite{barvinok})]\label{thm:hbek}
Let $V$ be an $\IR$-vector space,
$C$ an algebraically solid cone, and $v\in V\setminus C$.
Then, there is a non-zero linear functional $\p\colon V\to\IR$ such that
\[
\p(v) \le \inf_{c\in C}\p(c).
\]
In particular, $\p(v)<\p(c)$ for any algebraic interior point $c\in C$.
\end{thm}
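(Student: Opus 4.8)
The plan is to reduce the statement to the classical dominated-extension form of the Hahn--Banach theorem by manufacturing a sublinear gauge out of the purely algebraic ``core'' hypothesis. Since $C$ is a convex cone it is in particular convex, and I will use only its convexity together with the existence of an algebraic interior point. First I would fix $c_0\in C^\circ$ and set $U=C-c_0$, a convex set having $0$ as an algebraic interior point. The initial task is to check that $0\in U^\circ$ makes $U$ \emph{absorbing}: for every $x\in V$ there is $\lambda>0$ with $\lambda x\in U$, directly from the definition of algebraic interior point applied at $c_0$. Hence the Minkowski gauge $p(x)=\inf\{t>0: x\in tU\}$ is finite everywhere, positively homogeneous, and subadditive (the latter from convexity of $U$), so $p$ is sublinear; moreover $\{x:p(x)<1\}\subseteq U\subseteq\{x:p(x)\le1\}$.

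Next I would locate $v$ relative to the gauge. Because $v\notin C$, the vector $w:=v-c_0$ does not lie in $U$, and the inclusion $\{p<1\}\subseteq U$ forces $p(w)\ge1$. On the one-dimensional subspace $\IR w$ define $\psi(tw)=t\,p(w)$. For $t\ge0$ one has $\psi(tw)=p(tw)$ by homogeneity, while for $t<0$ one has $\psi(tw)<0\le p(tw)$; thus $\psi\le p$ on $\IR w$. Invoking the Hahn--Banach extension theorem, $\psi$ extends to a linear functional $\p\colon V\to\IR$ with $\p\le p$ on all of $V$ and $\p(w)=p(w)\ge1$.

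It then remains to read off the separation. For any $c\in C$ one has $c-c_0\in U$, whence $p(c-c_0)\le1$ and therefore $\p(c)-\p(c_0)=\p(c-c_0)\le1$. On the other hand $\p(v)=\p(c_0)+\p(w)=\p(c_0)+p(w)\ge \p(c_0)+1$. Combining the two estimates gives $\p(c)\le\p(c_0)+1\le\p(v)$ for every $c\in C$, i.e.\ $\p(v)\ge\sup_{c\in C}\p(c)$. The functional $-\p$ is then nonzero (since $\p(w)\ge1$) and satisfies $(-\p)(v)\le\inf_{c\in C}(-\p)(c)$, which is the asserted inequality after renaming $-\p$ as $\p$.

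For the ``in particular'' clause I would argue by contradiction using the algebraic interior. With the final functional $\p$, which satisfies $\p(v)\le\inf_{c\in C}\p(c)$, suppose the value $\p(v)$ were also attained at some $c\in C^\circ$, i.e.\ $\p(c)=\p(v)$. Since $c$ is an algebraic interior point, for each $x\in V$ there is $\e>0$ with $c\pm\e x\in C$, so $\p(c\pm\e x)\ge\p(v)=\p(c)$ yields $\pm\e\,\p(x)\ge0$ and hence $\p(x)=0$; as $x$ was arbitrary this contradicts $\p\neq0$. Therefore $\p(v)<\p(c)$ for every $c\in C^\circ$. The only genuinely non-routine point is the crux of the first paragraph: with no topology at hand one must extract a bona fide sublinear functional solely from the algebraic-interior hypothesis, so that the dominated Hahn--Banach extension becomes applicable; once the gauge is in place, the remainder is bookkeeping with signs and homogeneity.
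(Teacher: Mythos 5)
Your proof is correct. The paper does not prove this theorem --- it is quoted from Barvinok's book --- so there is nothing internal to compare against; your argument (pass to $U=C-c_0$, extract the Minkowski gauge from the absorbing property of the algebraic interior point, extend from the line $\IR w$ by the dominated Hahn--Banach extension theorem, and flip the sign) is the standard proof of the Eidelheit--Kakutani theorem, and every step, including the strict inequality at algebraic interior points, checks out.
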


Notice that the Combes axiom $\cA=\cA^{\mathrm{bdd}}$ is equivalent to that
the unit $1$ is an algebraic interior point of $\cA_+\subset\cA_\rh$ and
$\arch(\cA_+)$ is the algebraic closure of $\cA_+$ in $\cA_\rh$.
(This is where the Combes axiom is needed and it can be dispensed
when the cone $\cA_+$ is algebraically closed. See Section 3.4 in \cite{schmudgen}.)
Let $\cA$ be a \spc.
A unital $*$-subspace $\cS\subset\cA$ is called a \emph{semi-operator system}.
Here, a $*$-subspace is a subspace which is closed under the $*$-operation.
Existence of $1$ in $\cS$ ensures that $\cS_+=\cS \cap \cA_+$ has enough elements
to span $\cS_\rh$. A linear functional $\p\colon\cS\to\ik$
is called a \emph{state} if $\p$ is self-adjoint, positive, and $\p(1)=1$.
Note that if $\ik=\IC$, then $\cS$ is spanned by $\cS_+$ and every positive linear
functional is automatically self-adjoint. However, this is not the case when $\ik=\IR$.
In any case, every $\IR$-linear functional $\p\colon\cS_\rh\to\IR$ extends
uniquely to a self-adjoint linear functional $\p\colon\cS\to\ik$.
We write $S(\cS)$ for the set of states on $\cS$.

\begin{cor}\label{cor:hb}
Let $\cA$ be a \spc.
\begin{enumerate}[$\bullet$]
\item
Let $W\subset\cA$ be a $*$-subspace
and $v\in\cA_{\rh}\setminus (\cA_+ + W_\rh)$.
Then, there is a state $\p$ on $\cA$ such that $\p(W)=\{0\}$ and $\p(v)\le0$.
\item {\normalfont (Krein's Extension Theorem)}
Let $\cS\subset\cA$ be a semi-operator system.
Then every state on $\cS$ extends to a state on $\cA$.
\end{enumerate}
\end{cor}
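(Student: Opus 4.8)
The plan is to deduce both statements from the Eidelheit--Kakutani Separation Theorem (Theorem~\ref{thm:hbek}) applied in the real vector space $\cA_\rh$, as the placement of this corollary suggests.

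For the first bullet, I would separate $v$ from the convex cone $C := \cA_+ + W_\rh \subset \cA_\rh$. This is a convex cone because $\cA_+$ is closed under addition and nonnegative scaling while $W_\rh$ is a real subspace, and it is \emph{algebraically solid}: the remark preceding the corollary identifies the Combes axiom with the statement that $1$ is an algebraic interior point of $\cA_+$, and since $\cA_+\subset C$ the same point lies in $C^\circ$. As $v\notin C$, Theorem~\ref{thm:hbek} yields a nonzero $\IR$-linear $\p\colon\cA_\rh\to\IR$ with $\p(v)\le\inf_{c\in C}\p(c)$. Because $C$ is a cone containing $0$, if $\p$ were negative somewhere on $C$ then homogeneity would force $\inf_C\p=-\infty$, contradicting finiteness of $\p(v)$; hence $\p\ge0$ on $C$, so $\inf_C\p=0$ and $\p(v)\le0$. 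In particular $\p$ is positive (as $\cA_+\subset C$) and $\p\ge0$ on the subspace $W_\rh$, forcing $\p(W_\rh)=\{0\}$.

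It remains to normalize and extend. If $\p(1)=0$, algebraic-interiority of $1$ gives $1\pm\lambda h\in\cA_+$ for each $h\in\cA_\rh$ and small $\lambda>0$, whence $\pm\p(h)\ge0$ and $\p\equiv0$, a contradiction; so $\p(1)>0$ and I may rescale to $\p(1)=1$. Extending $\p$ self-adjointly to $\cA$, any $w\in W$ has hermitian part $\tfrac12(w+w^*)\in W_\rh$, on which $\p$ vanishes, and skew-hermitian part $b=\tfrac12(w-w^*)$ with $\p(b)=0$ --- automatically when $\ik=\IR$, and because $\ii b\in W_\rh$ when $\ik=\IC$. Thus $\p(W)=\{0\}$ and $\p$ is the desired state.

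For Krein's Extension Theorem I would reduce to the first bullet. Given a state $\psi$ on $\cS$, put $W:=\{\,s-\psi(s)1 : s\in\cS\,\}$; self-adjointness of $\psi$ makes this a $*$-subspace, and any state $\p$ on $\cA$ with $\p(W)=\{0\}$ satisfies $\p(s)=\psi(s)\p(1)=\psi(s)$, i.e.\ extends $\psi$. The point at which to apply the first bullet is $v=-1$, and the one substantive step in the reduction --- which I expect to be the main obstacle --- is to check $-1\notin\cA_+ + W_\rh$: if $-1=a+(s-\psi(s)1)$ with $a\in\cA_+$ and $s\in\cS_\rh$, then $(\psi(s)-1)1-s=a$ lies in $\cA_+\cap\cS_\rh=\cS_+$, so applying the positive functional $\psi$ gives the absurdity $-1=\psi\bigl((\psi(s)-1)1-s\bigr)\ge0$. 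The first bullet then supplies a state vanishing on $W$, which is precisely an extension of $\psi$.
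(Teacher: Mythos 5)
Your proof of the first bullet is the paper's proof: separate $v$ from the algebraically solid cone $\cA_++W_\rh$ via Theorem~\ref{thm:hbek}, normalize using $\p(1)>0$, and upgrade $\p(W_\rh)=\{0\}$ to $\p(W)=\{0\}$ by splitting elements of $W$ into hermitian and skew-hermitian parts (the paper packages this as the identity $\lambda\p(x)+(\lambda\p(x))^*=\p(\lambda x+(\lambda x)^*)=0$ for all $\lambda\in\ik$, which is the same computation). You also correctly supply two details the paper leaves implicit, namely that $\inf_C\p=0$ by homogeneity of the cone and that $\p(1)=0$ would force $\p\equiv0$.

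For Krein's theorem the routes genuinely diverge, though both are separation arguments. The paper applies the separation theorem afresh: it separates a point $v\in\cS_\rh$ with $\p(v)<0$ from the larger cone $\{x\in\cS_\rh:\p(x)\geq0\}+\cA_+$ and then argues ``in the same way as above'' that the resulting state kills $\ker\p$. You instead reduce to the first bullet by taking $W=\{s-\psi(s)1:s\in\cS\}$ --- which is exactly $\ker\psi$, and is a $*$-subspace precisely because $\psi$ is self-adjoint --- together with $v=-1$. The single substantive verification, that $-1\notin\cA_++W_\rh$, you carry out correctly: from $-1=a+(s-\psi(s)1)$ one gets $a=(\psi(s)-1)1-s\in\cA_+\cap\cS=\cS_+$, and applying $\psi$ yields $-1\geq0$. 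What your reduction buys is economy: the normalization $\p(1)=1$ and the passage from $W_\rh$ to $W$ are inherited from the first bullet rather than re-derived, and the positivity of $\psi$ enters at exactly one transparent point. What the paper's direct separation buys is a statement that does not require first packaging $\ker\psi$ as a $*$-subspace. Both arguments are correct and of essentially the same depth.
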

\begin{proof}
Since $\cA_+ + W_\rh$ is an algebraically solid cone in $\cA_\rh$, one may find
a non-zero linear functional $\p$ on $\cA_\rh$ such that
\[
\p(v)\le\inf\{\p(c) : c\in\cA_+ + W_\rh\}.
\]
Since $\p$ is non-zero, $\p(1)>0$ and one may assume that $\p(1)=1$.
Thus the self-adjoint extension of $\p$ on $\cA$, still denoted by $\p$,
is a state such that $\p(v)\le0$ and $\p(W_\rh)=\{0\}$.
Let $x\in W$. Then, for every $\lambda\in\ik$, one has
\[
\lambda \p(x) + (\lambda\p(x))^* = \p((\lambda x)+(\lambda x)^*)=0.
\]
This implies $\p(W)=\{0\}$ in either case $\ik\in\{\IC,\IR\}$.

For the second assertion, let $\p\in S(\cS)$ be given and consider the cone
\[
C=\{ x\in \cS_\rh : \p(x)\geq0\}+\cA_+.
\]
It is not too hard to see that $C$ is an algebraically solid cone in $\cA_\rh$
and $v\notin C$ for any $v\in \cS_{\rh}$ such that $\p(v)<0$.
Hence, one may find a state $\bar{\p}$ on $\cA$
such that $\bar{\p}(C)\subset\IR_{\geq0}$.
In the same way as above, one has that $\bar{\p}$ is zero on $\ker\p$,
which means $\bar{\p}|_\cS=\p$.
\end{proof}
\subsection{GNS construction}
We recall the celebrated GNS
construction (Gel\-fand--Nai\-mark--Segal construction),
which provides \srep{}s out of states.
Let a \spc $\cA$ and a state $\p\in S(\cA)$ be given.
Then, $\cA$ is equipped with a semi-inner product
$\ip{y,x}=\p(x^*y)$, and it gives rise to a Hilbert space,
which will be denoted by $L^2(\cA,\p)$.
We denote by $\hat{x}$ the vector in $L^2(\cA,\p)$ that corresponds to $x\in\cA$.
Thus, $\ip{\hat{y},\hat{x}}=\p(x^*y)$ and $\|\hat{x}\|=\p(x^*x)^{1/2}$.
The left multiplication $x\mapsto ax$ by an element $a\in\cA$ extends
to a bounded linear operator $\pi_\p(a)$ on $L^2(\cA,\p)$ such that
$\pi_\p(a)\hat{x}=\widehat{ax}$ for $a,x\in\cA$.
(Observe that $a^*a\le R1$ implies $\|\pi_\p(a)\|^2\le R$.)
It follows that $\pi_\p\colon\cA\to\IB(L^2(\cA,\p))$ is a positive \srep of $\cA$ such that
$\ip{\pi_\p(a)\hat{1},\hat{1}}=\p(a)$.

If $\pi\colon \cA\to\IB(\Hil)$ is a positive \srep
having a unit cyclic vector $\xi$, then $\p(a)=\ip{\pi(a)\xi,\xi}$
is a state on $\cA$ and $\pi(x)\xi\mapsto\hat{x}$ extends to a
unitary isomorphism between $\Hil$ and $L^2(\cA,\p)$ which intertwines
$\pi$ and $\pi_\p$. Since every positive \srep decomposes
into a direct sum of cyclic representations, one may obtain the universal
\ca $\mathrm{C}^*_\ru(\cA)$ of $\cA$ as the closure of the image
under the positive \srep
\[
\bigoplus_{\p\in S(\cA)}\pi_\p \colon \cA\to\IB(\bigoplus_{\p\in S(\cA)}L^2(\cA,\p)).
\]
We also make an observation that $(\cA\otimes^{\min}\cB)_+$ in Example~\ref{exa:tensor}
coincides with
\[
\{ c\in (\cA\otimes\cB)_\rh : (\p\otimes\psi)(z^*cz)\geq0
\mbox{ for all }\p\in S(\cA),\ \psi\in S(\cB),\ z\in\cA\otimes\cB\}.
\]
\subsection{Real versus Complex}\label{sec:realvscomplex}
We describe here the relation between real and complex \spc{}s.
Because the majority of the researches on \ca{}s are carried out
for complex \ca{}s, we look for a method of reducing
real problems to complex problems. Suppose $\cA_{\IR}$ is a real \spc.
Then, the complexification of $\cA_{\IR}$ is the complex
\spc $\cA_{\IC}=\cA_{\IR}+\ii\cA_{\IR}$.
The $*$-algebra structure (over $\IC$) of $\cA_{\IC}$ is defined in an obvious way,
and $(\cA_{\IC})_+$ is defined to be the $*$-positive cone generated by $(\cA_{\IR})_+$:
\[
(\cA_{\IC})_+ = \{ \sum_{i=1}^n z_i^*a_iz_i : n\in\IN,\ a_i\in(\cA_{\IR})_+,\ z_i\in\cA_{\IC}\}.
\]
(This is a temporary definition, and the official one will be given later.
See Lemma~\ref{lem:cprc}.)
Note that $\cA_{\IR}\cap(\cA_{\IC})_+=(\cA_{\IR})_+$.
The complexification $\cA_{\IC}$ has an involutive and conjugate-linear $*$-automorphism
defined by $x+\ii y\mapsto x-\ii y$, $x,y\in\cA_{\IR}$.
Every complex \spc with an involutive and
conjugate-linear $*$-automorphism arises in this way.

\begin{lem}
Let $\pi_{\IR}\colon\cA_{\IR}\to\cB_{\IR}$ be a $*$-homomorphism between real \spc{}s
(resp.\ $\p_{\IR}\colon\cA_{\IR}\to\IR$ be a self-adjoint linear functional).
Then, the complexification $\pi_{\IC}\colon\cA_{\IC}\to\cB_{\IC}$
(resp.\ $\p_{\IC}\colon\cA_{\IC}\to\IC$) is positive if and only if
$\pi_{\IR}$ (resp.\ $\p_{\IR}$) is so.
\end{lem}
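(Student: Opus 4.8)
The plan is to handle the two ``only if'' directions together and quickly, then the homomorphism ``if'' direction, and finally the functional ``if'' direction, which carries essentially all the content.

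First I would dispose of both ``only if'' implications at once. Since $(\cA_\IR)_+\subset(\cA_\IC)_+$ and since $\pi_\IC$ (resp.\ $\p_\IC$) restricts to $\pi_\IR$ (resp.\ $\p_\IR$) on $\cA_\IR$, any $a\in(\cA_\IR)_+$ already lies in $(\cA_\IC)_+$; hence positivity of the complexification forces $\pi_\IR(a)=\pi_\IC(a)\in(\cB_\IC)_+$ (resp.\ $\p_\IR(a)=\p_\IC(a)\ge0$). In the homomorphism case I then invoke $\cB_\IR\cap(\cB_\IC)_+=(\cB_\IR)_+$ to land back in $(\cB_\IR)_+$; in the functional case the value is already a nonnegative real. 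Either way $\pi_\IR$ (resp.\ $\p_\IR$) is positive.

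Next comes the ``if'' direction for homomorphisms. Because $(\cA_\IC)_+$ is by definition the $*$-positive cone generated by $(\cA_\IR)_+$, it suffices to check that $\pi_\IC$ sends each generator $z^*az$ (with $a\in(\cA_\IR)_+$, $z\in\cA_\IC$) into $(\cB_\IC)_+$. Here $\pi_\IC(z^*az)=\pi_\IC(z)^*\pi_\IR(a)\pi_\IC(z)$, and positivity of $\pi_\IR$ gives $\pi_\IR(a)\in(\cB_\IR)_+\subset(\cB_\IC)_+$; since $(\cB_\IC)_+$ is itself a $*$-positive cone, the whole expression lies in $(\cB_\IC)_+$. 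Taking finite sums shows $\pi_\IC$ is positive.

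The hard part will be the functional ``if'' direction, where the multiplicativity trick is unavailable. I would fix a generator $c=z^*az$ with $a\in(\cA_\IR)_+$, write $z=u+\ii v$ with $u,v\in\cA_\IR$, and expand
\[
z^*az=(u^*au+v^*av)+\ii\,(u^*av-v^*au)=:P+\ii Q ,
\]
with $P,Q\in\cA_\IR$. Then $P=u^*au+v^*av\in(\cA_\IR)_+$, while $a^*=a$ gives $Q^*=v^*au-u^*av=-Q$, so $Q$ is skew-hermitian. Applying the complexified functional yields $\p_\IC(c)=\p_\IR(P)+\ii\,\p_\IR(Q)$; positivity of $\p_\IR$ makes $\p_\IR(P)\ge0$, and self-adjointness of $\p_\IR$ forces $\p_\IR(Q)=\p_\IR(Q^*)=-\p_\IR(Q)$, hence $\p_\IR(Q)=0$. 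Thus $\p_\IC(c)=\p_\IR(P)\ge0$, and summing over generators shows $\p_\IC$ is positive. The single point to stress is that self-adjointness of $\p_\IR$ is precisely what annihilates the skew-hermitian (imaginary) part; this is the crux of the whole lemma.
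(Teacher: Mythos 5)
Your proof is correct and the key step---expanding $z^*az$ with $z=u+\ii v$ into a $*$-positive real part plus $\ii$ times a skew-hermitian part, which self-adjointness of $\p_{\IR}$ annihilates---is exactly the argument the paper gives for the functional ``if'' direction, the only case it does not dismiss as trivial. Your more detailed treatment of the remaining implications (using $\cB_{\IR}\cap(\cB_{\IC})_+=(\cB_{\IR})_+$ and the multiplicativity of $\pi_{\IC}$ on generators) is also sound and matches what the paper leaves to the reader.
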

\begin{proof}
We only prove that $\p_{\IC}$ is positive if $\p_{\IR}$ is so. The rest is trivial.
Let $b=\sum_i z_i^*a_iz_i\in (\cA_{\IC})_+$ be arbitrary, where $a_i\in(\cA_{\IR})_+$
and $z_i=x_i+\ii y_i$. Then,
$b=\sum_i (x_i^*a_ix_i + y_i^*a_iy_i) + \ii\sum_i(x_i^*a_iy_i-y_i^*a_ix_i)$.
Since $x_i^*a_iy_i-y_i^*a_ix_i$ is skew-hermitian, one has
$\p_{\IR}(x_i^*a_iy_i-y_i^*a_ix_i)=0$, and
$\p_{\IC}(b)=\p_{\IR}(\sum_i x_i^*a_ix_i + y_i^*a_iy_i)\geq0$.
This shows $\p_{\IC}$ is positive.
\end{proof}

We note that if $\Hil_{\IC}$ denotes the complexification of
a real Hilbert space $\Hil_{\IR}$, then
$\IB(\Hil_{\IR})_{\IC}=\IB(\Hil_{\IC})$.
Thus every positive \srep of a real
\spc $\cA_{\IR}$ on $\Hil_{\IR}$ extends
to a positive \srep of its complexification $\cA_{\IC}$
on $\Hil_{\IC}$.
Conversely, if $\pi$ is a positive \srep of $\cA_{\IC}$
on a complex Hilbert space $\Hil_{\IC}$, then its restriction to $\cA_{\IR}$
is a positive \srep on the \emph{realification} of $\Hil_{\IC}$.
The realification of a complex Hilbert space $\Hil_{\IC}$ is the real Hilbert
space $\Hil_{\IC}$ equipped with the real inner product
$\ip{\eta,\xi}_{\IR}=\Re\ip{\eta,\xi}$.
Therefore, we arrive at the conclusion that
$\mathrm{C}^*_\ru(\cA_{\IR})_{\IC}=\mathrm{C}^*_\ru(\cA_{\IC})$.
We also see that $(\IR[\G])_{\IC}=\IC[\G]$,
$(\cA_{\IR}\otimes \cB_{\IR})_{\IC}=\cA_{\IC}\otimes \cB_{\IC}$,
$(\cA_{\IR}* \cB_{\IR})_{\IC}=\cA_{\IC}* \cB_{\IC}$, etc.

\subsection{Proof of Theorem~\ref{thm:fundamental}}
We only prove the first assertion of Theorem~\ref{thm:fundamental}.
The proof of the second is very similar.
We will prove a stronger assertion that
\[
\|\iota(x)\|_{\mathrm{C}^*_\ru(\cA)}=\inf\{ R>0 : R^2 1 - x^*x \in\cA_+\}.
\]
The inequality $\le$ trivially follows from the $\mathrm{C}^*$-identity.
For the converse, assume that the right hand side is non-zero, and
choose $\lambda>0$ such that $\lambda^2 1 - x^*x\notin \cA_+$.
By Corollary~\ref{cor:hb}, there is $\p\in S(\cA)$
such that $\p(\lambda^2 1-x^*x)\le0$.
Thus for the GNS representation $\pi_\p$, one has
\[
\|\pi_\p(x)\|\geq\|\pi_\p(x)\hat{1}\|=\p(x^*x)^{1/2}\geq\lambda.
\]
It follows that $\|\iota(x)\|\geq\lambda$.
\hspace*{\fill}\qedsymbol
\subsection{Trace positive elements}
Let $\cA$ be a \spc.
A state $\tau$ on $\cA$ is called a \emph{tracial state}
if $\tau(xy)=\tau(yx)$ for all $x,y\in\cA$, or equivalently if
$\tau$ is zero on the $*$-subspace $K=\lh\{ xy-yx : x,y\in\cA\}$ spanned by commutators in $\cA$.
We denote by $T(\cA)$ the set of tracial states on $\cA$ (which may be empty).
Associated with $\tau\in T(\cA)$ is a finite von Neumann algebra $(\pi_\tau(\cA)'',\tau)$,
which is the von Neumann algebra generated by $\pi_\tau(\cA)\subset\IB(L^2(\cA,\tau))$
with the faithful normal tracial state $\tau(a)=\ip{a\hat{1},\hat{1}}$
that extends the original $\tau$.
Recall that a finite von Neumann algebra is a pair $(M,\tau)$ of
a von Neumann algebra and a faithful normal tracial state $\tau$ on $M$.
The following theorem is proved in \cite{ks} for the algebra in Example~\ref{exa:ncp}
and in \cite{jpc} for the free group algebras,
but the proof equally works in the general setting.
We note that for some groups $\G$, notably for $\G=\mathrm{SL}_3(\IZ)$ (\cite{bekkas}),
it is possible to describe all the tracial states on $\ik[\G]$ .

\begin{thm}[\cite{ks}]\label{thm:ksg}
Let $\cA$ be a \spc, and $a\in\cA_\rh$.
Then, the following are equivalent.
\begin{enumerate}[$(1)$]
\item\label{ks1}
$\tau(a)\geq0$ for all $\tau\in T(\cA)$.
\item\label{ks2}
$\tau(\pi(a))\geq0$ for every finite von Neumann algebra $(M,\tau)$
and every positive \shom $\pi\colon\cA\to M$.
\item\label{ks3}
$a\in\arch(\cA_+ + K_\rh)$, where $K_\rh=K\cap\cA_\rh = \lh\{ x^*x - xx^* : x \in \cA \}$.
\end{enumerate}
\end{thm}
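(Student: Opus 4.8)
The plan is to establish the cycle $(3)\Rightarrow(2)\Rightarrow(1)\Rightarrow(3)$, with the genuine content concentrated in the last implication, which I would deduce from the Eidelheit--Kakutani separation packaged in Corollary~\ref{cor:hb}. For $(3)\Rightarrow(2)$, I would fix a finite von Neumann algebra $(M,\tau)$ and a positive \shom $\pi\colon\cA\to M$ and verify that $\tau\circ\pi$ is nonnegative on $\cA_+ + K_\rh$. Positivity of $\pi$ gives $\tau(\pi(c))\ge0$ for $c\in\cA_+$, and since $\tau$ is a trace on $M$ and $\pi$ is multiplicative, $\tau(\pi(xy-yx))=\tau(\pi(x)\pi(y))-\tau(\pi(y)\pi(x))=0$, so $\tau\circ\pi$ annihilates $K$ and in particular $K_\rh$. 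Because $\pi(1)=1$ and $\tau(1)=1$, membership $a+\e 1\in\cA_+ + K_\rh$ for every $\e>0$ forces $\tau(\pi(a))\ge-\e$ for every $\e>0$, i.e.\ $\tau(\pi(a))\ge0$; this is how the archimedean closure is absorbed.

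For $(2)\Rightarrow(1)$ I would specialize to the von Neumann algebra furnished by the GNS construction: given $\tau\in T(\cA)$, the pair $(\pi_\tau(\cA)'',\tau)$ is a finite von Neumann algebra, $\pi_\tau$ is a positive \shom into it, and $\tau(\pi_\tau(a))=\ip{\pi_\tau(a)\hat1,\hat1}=\tau(a)$, so $(2)$ yields $\tau(a)\ge0$. Both directions are routine, and the crux is $(1)\Rightarrow(3)$.

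I would prove $(1)\Rightarrow(3)$ by contraposition. Suppose $a\notin\arch(\cA_+ + K_\rh)$. By the definition of the archimedean closure there is $\e_0>0$ with $v:=a+\e_0 1\notin\cA_+ + K_\rh$, and the Combes axiom makes $1$ an algebraic interior point of $\cA_+$, hence of $\cA_+ + K_\rh$, so that Corollary~\ref{cor:hb} applies. Taking the $*$-subspace $W:=K$ (which is closed under $*$ since $(xy-yx)^*=y^*x^*-x^*y^*\in K$, and satisfies $W_\rh=K\cap\cA_\rh=K_\rh$), the first bullet of Corollary~\ref{cor:hb} produces a state $\p$ on $\cA$ with $\p(K)=\{0\}$ and $\p(v)\le0$. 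The decisive and essentially cost-free observation is that $\p(K)=\{0\}$ says precisely $\p(xy)=\p(yx)$ for all $x,y\in\cA$, i.e.\ $\p\in T(\cA)$; and then $\p(a)=\p(v)-\e_0\le-\e_0<0$ contradicts $(1)$.

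The main point requiring care is the simultaneous bookkeeping in this last step: the single functional supplied by Corollary~\ref{cor:hb} must at once be a state, be tracial (vanishing on all of $K$, not merely on $K_\rh$), and be strictly negative on $a$. Here the proof of Corollary~\ref{cor:hb} already upgrades $\p(W_\rh)=\{0\}$ to $\p(W)=\{0\}$ via the scalar identity $\lambda\p(x)+(\lambda\p(x))^*=\p(\lambda x+(\lambda x)^*)$, so no extra argument is needed. Separately I would record the auxiliary description $K\cap\cA_\rh=\lh\{x^*x-xx^* : x\in\cA\}$ asserted in $(3)$: the inclusion $\supseteq$ is immediate since each $x^*x-xx^*$ is a hermitian self-commutator, while $\subseteq$ follows by a short polarization writing a hermitian combination of commutators $xy-yx$ through self-commutators, with the case $\ik=\IR$ treated separately. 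This description is convenient but is not actually invoked in the equivalences, which use only $K_\rh=K\cap\cA_\rh$.
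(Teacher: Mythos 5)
Your proposal is correct and follows essentially the same route as the paper: the paper likewise dismisses $(1)\Leftrightarrow(2)$ via the GNS construction and $(3)\Rightarrow(1)$ as trivial, and proves $(1)\Rightarrow(3)$ by exactly your separation argument, applying Corollary~\ref{cor:hb} with $W=K$ to $a+\e1\notin\cA_++K_\rh$ to produce a tracial state $\tau$ with $\tau(a)\le-\e<0$. Your extra details (the explicit verification of $(3)\Rightarrow(2)$, the upgrade from $\p(K_\rh)=\{0\}$ to $\p(K)=\{0\}$, and the polarization identity behind $K\cap\cA_\rh=\lh\{x^*x-xx^*\}$) are all accurate elaborations of steps the paper leaves implicit.
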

\begin{proof}
The equivalence $(\ref{ks1})\Leftrightarrow(\ref{ks2})$ follows from the GNS construction.
We only prove $(\ref{ks1})\Rightarrow(\ref{ks3})$, as the converse is trivial.
Suppose $a+\e1\notin \cA_+ + K_\rh$ for some $\e>0$.
Then, by Corollary~\ref{cor:hb}, there is $\tau\in S(\cA)$ such that
$\tau(K)=\{0\}$ (i.e., $\tau\in T(\cA)$) and $\tau(a)\le-\e<0$.
\end{proof}

\subsection{Connes Embedding Conjecture}\label{sec:cec}
The Connes Embedding Conjecture (CEC)
asserts that any finite von Neumann algebra $(M,\tau)$ with
separable predual is embeddable into the ultrapower $R^\omega$ of
the hyperfinite $\mathrm{II}_1$-factor $R$ (over $\ik\in\{\IC,\IR\}$).
Here an embedding means an injective $*$-homomorphism which preserves
the tracial state. We note that if $\tau$ is a tracial state on a \spc $\cA$
and $\theta\colon\cA \to N$ is a \tpr \shom into
a finite von Neumann algebra $(N,\tau)$, then $\theta$ extends to a
\tpr \siso from $\pi_\tau(\cA)''$ onto the von Neumann subalgebra generated by $\theta(\cA)$
in $N$ (which coincides with the ultraweak closure of $\theta(\cA)$).
Hence, $(M,\tau)$ satisfies CEC if there is an ultraweakly dense $*$-subalgebra
$\cA\subset M$ which has a \tpr embedding into $R^\omega$.
In particular, CEC is equivalent to that for every countably
generated \spc $\cA$ and $\tau\in T(\cA)$,
there is a \tpr \shom from $\cA$ into $R^\omega$.
We will see that this is equivalent to the tracial analogue of
\pstz in Corollary~\ref{cor:poss}.
We first state a few equivalent forms of CEC.
We denote by $\tr$ the tracial state $\frac{1}{N}\Tr$ on $\IM_N(\ik)$.

\begin{thm}\label{thm:cecequiv}
For a finite von Neumann algebra $(M,\tau)$ with separable predual,
the following are equivalent.
\begin{enumerate}[$(1)$]
\item\label{cond:acec}
$(M,\tau)$ satisfies CEC, i.e., $M\hookrightarrow R^\omega$.
\item\label{cond:acech}
Let $d\in\IN$ and $x_1,\ldots,x_d\in M$ be hermitian contractions.
Then, for every $m\in\IN$ and $\e>0$, there are $N\in\IN$ and
hermitian contractions $X_1,\ldots,X_d \in\IM_N(\ik)$ such that
\[
|\tau(x_{i_1}\cdots x_{i_k}) - \tr(X_{i_1}\cdots X_{i_k})|<\e
\]
for all $k\le m$ and $i_j\in\{1,\ldots,d\}$.
\item\label{cond:acecu}
Assume $\ik=\IC$ (or replace $M$ with its complexification in case $\ik=\IR$).
Let $d\in\IN$ and $u_1,\ldots,u_d\in M$ be unitary elements.
Then, for every $\e>0$, there are $N\in\IN$ and
unitary matrices $U_1,\ldots,U_d \in\IM_N(\IC)$ such that
\[
|\tau(u_i^*u_j) - \tr(U_i^*U_j)|<\e
\]
for all $i,j\in\{1,\ldots,d\}$.
\end{enumerate}
In particular, CEC holds true if and only if every $(M,\tau)$ satisfies
condition $(\ref{cond:acech})$ and/or $(\ref{cond:acecu})$.
\end{thm}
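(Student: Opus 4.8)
The plan is to prove the cycle $(\ref{cond:acec})\Rightarrow(\ref{cond:acech})\Rightarrow(\ref{cond:acecu})\Rightarrow(\ref{cond:acec})$. The common engine is that the hyperfinite $\mathrm{II}_1$-factor is the tracial closure $R=\overline{\bigcup_k\IM_{2^k}(\ik)}$ of the matrix algebras, so that each element of $R^\omega$ is represented by a bounded sequence in $R$, the trace of $R^\omega$ is the $\lim_\omega$ of the traces of the representatives, and each representative is $\|\cdot\|_2$-approximable by matrices. Combined with the extension principle recalled in Section~\ref{sec:cec} --- a \tpr \shom from an ultraweakly dense $*$-subalgebra $\cA\subset M$ extends to a \tpr, hence injective, embedding of $M=\pi_\tau(\cA)''$ --- this reduces everything to matching finitely many moments by matrices.

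For $(\ref{cond:acec})\Rightarrow(\ref{cond:acech})$, I would view the hermitian contractions $x_i$ inside $R^\omega$ and lift each to a sequence $(r^{(i)}_n)_n$ of hermitian contractions in $R$, using functional calculus to keep each $r^{(i)}_n$ hermitian and contractive. Since $\tau(x_{i_1}\cdots x_{i_k})=\lim_\omega\tau_R(r^{(i_1)}_n\cdots r^{(i_k)}_n)$ and there are only finitely many words of length $\le m$, for $\omega$-almost every $n$ all of these moments are simultaneously matched within $\e/2$; fix one such $n$. It then remains to approximate the finitely many $r^{(i)}_n$ in $\|\cdot\|_2$ by elements of some $\IM_{2^k}(\ik)$, corrected to hermitian contractions by functional calculus; continuity of the moments of contractions in $\|\cdot\|_2$ delivers the desired $X_i$.

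For $(\ref{cond:acech})\Rightarrow(\ref{cond:acecu})$, I would use that every hermitian contraction is the real part of a unitary, $u=x+\ii\sqrt{1-x^2}$, and that $\sqrt{1-t^2}$ is a uniform limit of polynomials on $[-1,1]$; hence $\tau(u_i^*u_j)$ is approximated by a polynomial moment in the hermitian contractions $x_i=\Re(u_i)$. Applying $(\ref{cond:acech})$ to these $x_i$ to sufficiently high degree and forming the matrix unitaries $U_i=X_i+\ii\sqrt{1-X_i^2}$ by the \emph{same} functional calculus applied to the matrix hermitian contractions $X_i$, the continuity of matrix functional calculus yields $\tr(U_i^*U_j)\approx\tau(u_i^*u_j)$.

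The main obstacle is $(\ref{cond:acecu})\Rightarrow(\ref{cond:acec})$. By the extension principle it suffices to produce, for unitaries $u_1,\ldots,u_d$ generating an ultraweakly dense subalgebra of $M$ and for each $m,\e$, matrix unitaries $U_i$ whose \emph{full} mixed $*$-moments $\tr(U_{i_1}^{\pm}\cdots U_{i_k}^{\pm})$ approximate $\tau(u_{i_1}^{\pm}\cdots u_{i_k}^{\pm})$ over all words of length $\le m$; a diagonal sequence of such microstates, transported into $R$ by trace-preserving embeddings $\IM_N(\IC)\hookrightarrow R$, then defines a \tpr \shom into $R^\omega$ (well defined since moment-matching forces preservation of $\|\cdot\|_2$) which extends to $M$. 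Writing each word of length $\le m$ as $v^*v'$ with $v,v'$ of length $\le\lceil m/2\rceil$ turns every such moment into a two-point moment $\tau(v^*v')$ of the enlarged finite family of words, and $(\ref{cond:acecu})$ does match these by some matrix unitaries $V_v$. The difficulty is genuine: two-point matching yields a priori unrelated $V_v$ rather than a \emph{consistent} family, and controls no triple product such as $\tr(V_{vv'}^*V_vV_{v'})$, so there is no soft way to force $V_{vv'}\approx V_vV_{v'}$. Upgrading the matching of two-point moments to an approximately multiplicative --- hence genuinely moment-matching --- matrix model is the substantive content, essentially equivalent to CEC itself; this is where I expect to invoke Kirchberg's analysis of $\mathrm{C}^*\IF_d\otimes\IB(\ell_2)$ and the $\min$-tensor/QWEP characterizations developed elsewhere in these notes, which convert the bilinear data $\tau(u_i^*u_j)$ into an honest finite-dimensional approximation.
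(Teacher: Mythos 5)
Your cycle $(\ref{cond:acec})\Rightarrow(\ref{cond:acech})\Rightarrow(\ref{cond:acecu})\Rightarrow(\ref{cond:acec})$ is a reasonable plan, and the first leg is exactly the paper's ``routine consequence of the ultraproduct construction.'' But there are two problems. The smaller one is in $(\ref{cond:acech})\Rightarrow(\ref{cond:acecu})$: a unitary is \emph{not} recovered from its real part, so the identity $u=\Re(u)+\ii\sqrt{1-\Re(u)^2}$ fails whenever $\Im(u)$ is not positive (try $u=-\ii$: the formula returns $\ii$); indeed $u$ and $u^*$ have the same real part. This is fixable: write $u_i=e^{\ii\pi h_i}$ with $h_i$ a hermitian contraction (Borel functional calculus, available since $M$ is a von Neumann algebra), approximate $t\mapsto e^{\ii\pi t}$ uniformly by polynomials on $[-1,1]$, and set $U_i=e^{\ii\pi H_i}$ for the matrix approximants $H_i$; this keeps the matrix side genuinely unitary and the rest of your continuity argument goes through.

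The serious gap is $(\ref{cond:acecu})\Rightarrow(\ref{cond:acec})$. You correctly observe that matching the two-point data $\tau(u_i^*u_j)$ does not softly upgrade to matching of all mixed moments, but your proposed escape --- invoking the $\mathrm{C}^*\IF_d\otimes\IB(\ell_2)$ theorem and QWEP-type characterizations --- is not what closes the loop; those results do not convert two-point data into microstates, and leaning on them here risks circularity. The paper's mechanism is Theorem~\ref{thm:kuq}, to which the text immediately after the statement points: first use $(\ref{cond:acecu})$ and a diagonal argument over a countable $\|\cdot\|$-dense sequence in the unitary group of a weakly dense separable \ca $A\subset M$ to produce unitaries $v_i\in R^\omega$ with \emph{exact} equality $\tau(u_i^*u_j)=\tau(v_i^*v_j)$; then $u_i\mapsto v_i$ extends to an isometry $L^2(A,\tau)\to L^2(R^\omega,\tau)$ carrying unitaries to unitaries (the unitary group is $\|\cdot\|_2$-closed), the Russo--Dye theorem makes it a contraction on $A$, and a multiplicative-domain argument shows that the unital, positive, \tpr contraction $\theta(a)=\p(1)^*\p(a)$ is a Jordan morphism, hence splits along a central projection into a \shom and an opposite \shom; since $R^\omega\cong(R^\omega)^{\op}$ both summands land back in $R^\omega$. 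No approximate multiplicativity of microstates is ever established directly --- the metric rigidity of unitaries in $L^2$ together with order structure does all the work. Without this (or an equivalent) idea your third implication remains unproved.
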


The equivalence $(\ref{cond:acec})\Leftrightarrow(\ref{cond:acech})$ is
a rather routine consequence of the ultraproduct construction.
For the equivalence to (\ref{cond:acecu}), see Theorem~\ref{thm:kuq}.
Note that the assumption $\ik=\IC$ in condition (\ref{cond:acecu}) is essential
because the real analogue of it is actually true (\cite{dj}).
Since any finite von Neumann algebra $M$ with separable predual
is embeddable into a $\mathrm{II}_1$-factor which is generated by
two hermitian elements (namely $(M*R) \mathbin{\bar{\otimes}} R$),
to prove CEC, it is enough to verify the conjecture (\ref{cond:acech})
for every $(M,\tau)$ and $d=2$.
We observe that a real finite von Neumann algebra $(M_{\IR},\tau_{\IR})$
is embeddable into $R_{\IR}^\omega$ (i.e., it satisfies CEC)
if and only if its complexification $(M_{\IC},\tau_{\IC})$
is embeddable into $R_{\IC}^\omega$. The ``only if" direction is trivial and
the ``if" direction follows from the real \shom
$\IM_N(\IC)\hookrightarrow\IM_{2N}(\IR)$,
$a+\ii b\mapsto\left[\begin{smallmatrix} a & b \\ -b & a\end{smallmatrix}\right]$.
A complex finite von Neumann algebra $(M,\tau)$ need not be
a complexification of a real von Neumann algebra, but $M\oplus M^{\mathrm{op}}$ is
(isomorphic to the complexification of the realification of $M$).
Therefore, $M$ satisfies CEC if and only if its realification satisfies it.

For a finite von Neumann algebra $(M,\tau)$ and $d\in\IN$, we denote by
$\cH_d(M)$ the set of those $f \in \ik\langle x_1,\ldots,x_d\rangle_\rh$
such that $\tau(f(X_1,\ldots,X_d))\geq0$ for all
hermitian contractions $X_1,\ldots,X_d\in M$.
Further, let
\[
\cH_d=\bigcap_M \cH_d(M)
\ \mbox{ and }\
\cH_d^\fin=\bigcap_N\cH_d(\IM_N(\ik))=\cH_d(R).
\]
Notice that $\cH_d=\arch(\ik\langle x_1,\ldots,x_d\rangle_+ + K_\rh)$
(see Example~\ref{exa:ncp} and Theorem~\ref{thm:ksg}).

\begin{cor}[\cite{ks}]
Let $\ik\in\{\IC,\IR\}$. Then one has the following.
\begin{enumerate}[$\bullet$]
\item
Let $(M,\tau)$ be a finite von Neumann algebra with separable predual.
Then, $M$ satisfies CEC if and only if $\cH_d^\fin\subset\cH_d(M)$ for all $d$.
\item
CEC holds true if and only if
$\cH_d^\fin=\arch(\ik\langle x_1,\ldots,x_d\rangle_+ + K_\rh)$
for all/some $d\geq2$.
\end{enumerate}
\end{cor}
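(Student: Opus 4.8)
The plan is to prove the first bullet and then read off the second from it. Throughout I would use the identity $\cH_d=\arch(\ik\langle x_1,\ldots,x_d\rangle_++K_\rh)$ recorded just before the corollary (it is the content of Theorem~\ref{thm:ksg} applied to Example~\ref{exa:ncp}), together with the trivial inclusion $\cH_d\subset\cH_d^\fin$, which holds because $\cH_d^\fin=\cH_d(R)$ is one of the factors in the intersection $\cH_d=\bigcap_M\cH_d(M)$. Since $\cH_d\subset\cH_d^\fin$ always, the equality asserted in the second bullet is equivalent to $\cH_d^\fin\subset\cH_d(M)$ for every finite von Neumann algebra $M$, which is precisely the condition the first bullet attaches to CEC; so once the first bullet is in hand, the second follows by quantifying over $M$.

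For the forward direction of the first bullet I would argue through the ultrapower. Assuming $M\hookrightarrow R^\omega$ trace-preservingly, the key point is $\cH_d^\fin=\cH_d(R)=\cH_d(R^\omega)$. A trace-preserving inclusion $N_1\subset N_2$ enlarges the family of test tuples, so $\cH_d(N_2)\subset\cH_d(N_1)$, giving $\cH_d(R^\omega)\subset\cH_d(R)$; conversely, representing hermitian contractions in $R^\omega$ by $\omega$-sequences of hermitian contractions in $R$ and passing $\tau_{R^\omega}(f(X))=\lim_\omega\tau_R(f(X^{(n)}))\ge0$ to the limit yields $\cH_d(R)\subset\cH_d(R^\omega)$. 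Then $\cH_d^\fin=\cH_d(R^\omega)\subset\cH_d(M)$ since $M\subset R^\omega$. This part is routine.

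The converse of the first bullet is the crux and the \emph{main obstacle}: from $\cH_d^\fin\subset\cH_d(M)$ I must produce the matricial moment approximation of Theorem~\ref{thm:cecequiv}(\ref{cond:acech}). Given hermitian contractions $x_1,\ldots,x_d\in M$, consider the tracial functional $\tau^{\underline x}\colon f\mapsto\tau(f(x_1,\ldots,x_d))$ on $\ik\langle x_1,\ldots,x_d\rangle$, which is a tracial state since each $1-x_i^2\ge0$. Fixing a degree bound $m$, I would restrict attention to the finite-dimensional real space $V_m$ of hermitian polynomials of degree $\le m$ and compare $\tau^{\underline x}$ with the convex set of matricial tracial states $f\mapsto\tr(f(X_1,\ldots,X_d))$ for hermitian contractions $X_i\in\IM_N(\ik)$. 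The dual cone of the closed convex cone these generate in $V_m^*$ is exactly $\cH_d^\fin\cap V_m$, and the hypothesis $\cH_d^\fin\subset\cH_d(M)$ forces $\tau^{\underline x}\ge0$ on $\cH_d^\fin\cap V_m$; so the bipolar theorem, i.e.\ the Eidelheit--Kakutani separation in the form of Corollary~\ref{cor:hb}, places $\tau^{\underline x}|_{V_m}$ in that cone. Normalizing at $1$ then exhibits, for each $m$ and $\e>0$, matrices realizing the moments of $(x_1,\ldots,x_d)$ up to $\e$, which is condition (\ref{cond:acech}), hence CEC by Theorem~\ref{thm:cecequiv}. The delicate points are the finite-dimensional separation and bipolar bookkeeping, and verifying that convex combinations of matricial tracial states are again matricial (via weighted direct sums and density of rational weights).

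Finally, for the second bullet the forward implication is immediate: if CEC holds then every separable-predual $M$ embeds into $R^\omega$, so the first bullet gives $\cH_d^\fin\subset\cH_d(M)$ for all such $M$ and all $d$; since trace positivity over $M$ depends only on the (finitely generated, hence separable) subalgebra generated by the test tuple, $\cH_d=\bigcap_M\cH_d(M)$ may be computed over separable $M$, yielding $\cH_d^\fin\subset\cH_d$, and with $\cH_d\subset\cH_d^\fin$ this is equality for every $d$. For the converse it suffices to treat a single $d\ge2$: from $\cH_d^\fin=\cH_d$ one gets $\cH_d^\fin\subset\cH_d(M)$ for all $M$, and given an arbitrary separable-predual $N$ I would invoke the reduction of \S\ref{sec:cec}, embedding $N$ trace-preservingly into the $\mathrm{II}_1$-factor $\tilde M=(N*R)\mathbin{\bar{\otimes}}R$ generated by two hermitian contractions $y_1,y_2$. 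Padding to the $d$-tuple $(y_1,y_2,0,\ldots,0)$, the converse of the first bullet produces condition (\ref{cond:acech}) for this generating tuple, which already gives a trace-preserving embedding $\tilde M\hookrightarrow R^\omega$; then $N\subset\tilde M$ satisfies CEC, so CEC holds.
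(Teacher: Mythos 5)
Your proposal is correct and is essentially the paper's argument: the crux (the converse of the first bullet) is the same finite-dimensional duality between the closed convex set of matricial moment functionals and the cone $\cH_d^\fin$, which you phrase directly via the bipolar theorem while the paper phrases it contrapositively, separating $(\tau(x_{\mathbf i}))_{\mathbf i}$ from that convex set by Eidelheit--Kakutani to produce a witness $f\in\cH_d^\fin\setminus\cH_d(M)$. The remaining pieces (the easy forward direction, and deducing the second bullet from the first together with Theorem~\ref{thm:ksg} and the reduction to two hermitian generators) likewise match the paper's route.
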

\begin{proof}
It is easy to see that condition (\ref{cond:acech}) in Theorem~\ref{thm:cecequiv}
implies $\cH_d^\fin\subset\cH_d(M)$.
Conversely, suppose condition (\ref{cond:acech}) does not hold
for some $d\in\IN$, $x_1,\ldots,x_d\in M$, $m\in\IN$, and $\e>0$.
We introduce the multi-index notation. For ${\mathbf i}=(i_1,\ldots,i_k)$,
$i_j\in\{1,\ldots,d\}$ and $k\le m$, we denote $x_{\mathbf i}=x_{i_1}\cdots x_{i_k}$.
It may happen that ${\mathbf i}$ is the null string $\emptyset$ and $x_\emptyset=1$.
Then,
\[
C=\mathrm{closure}\{(\tr(X_{\mathbf i}))_{\mathbf i} : N\in\IN,\ X_1,\ldots,X_d\in\IM_N(\ik)_\rh,\ \|X_i\|\le1\}
\]
is a convex set (consider a direct sum of matrices).
Hence by Theorem~\ref{thm:hbek}, there are $\lambda\in\IR$
and $\alpha_{\mathbf i}\in\ik$ such that
\[
\Re\sum_{\mathbf i}\alpha_{\mathbf i}\tau(x_{\mathbf i}) < \lambda
\le \inf_{\gamma\in C} \Re \sum_{\mathbf i}\alpha_{\mathbf i}\gamma_{\mathbf i}.
\]
Replacing $\alpha_{\mathbf i}$ with $(\alpha_{\mathbf i}+\alpha_{{\mathbf i}^*}^*)/2$ (here ${\mathbf i}^*$
is the reverse of ${\mathbf i}$), we may omit $\Re$ from the above inequality.
Further, arranging $\alpha_{\emptyset}$, we may assume $\lambda=0$.
Thus $f=\sum_{\mathbf i}\alpha_{\mathbf i}x_{\mathbf i}$ belongs to $\cH_d^\fin$, but not to $\cH_d(M)$.
This completes the proof of the first half.
The second half follows from this and Theorem~\ref{thm:ksg}.
\end{proof}

An analogue to the above also holds for $\IC[\IF_d]$.
\begin{cor}[\cite{jpc}]
Let $\ik=\IC$. The following holds.
\begin{enumerate}[$\bullet$]
\item
Let $(M,\tau)$ be a finite von Neumann algebra with separable predual.
Then, $M$ satisfies CEC if and only if the following holds true:
If $d\in\IN$ and $\alpha\in\IM_d(\IC)_\rh$ satisfies that
$\tr(\sum \alpha_{i,j}U_i^*U_j)\geq0$ for every $N\in\IN$ and $U_1,\ldots,U_d\in\IM_N(\IC)_\ru$,
then it satisfies $\tau(\sum \alpha_{i,j}u_i^*u_j)\geq0$ for every unitary elements $u_1,\ldots,u_d\in M$.
\item
CEC holds true if and only if for every $d\in\IN$ and $\alpha\in\IM_d(\IC)_\rh$ the following holds true:
If $\tr(\sum \alpha_{i,j}U_i^*U_j)\geq0$ for every $N\in\IN$ and $U_1,\ldots,U_d\in\IM_N(\IC)_\ru$,
then $\sum \alpha_{i,j}s_i^*s_j\in \arch(\IC[\IF_d]_+ + K_\rh)$, where $s_1,\ldots,s_d$ are the
free generators of $\IF_d$.
\end{enumerate}
\end{cor}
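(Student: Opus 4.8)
The plan is to run the argument of the preceding corollary with unitaries in place of hermitian contractions: condition (\ref{cond:acech}) of Theorem~\ref{thm:cecequiv} is replaced by its unitary form (\ref{cond:acecu}), and the tracial \pstz of Theorem~\ref{thm:ksg} is invoked for the group algebra $\cA=\IC[\IF_d]$ of Example~\ref{exa:grpalg} rather than for $\IC\langle x_1,\ldots,x_d\rangle$. The first bullet is the representation-level statement, and the second is obtained from it by passing to the universal object $\IC[\IF_d]$.

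For the forward direction of the first bullet I would assume $M$ satisfies CEC and unwind condition (\ref{cond:acecu}). Given $\alpha\in\IM_d(\IC)_\rh$ with $\tr(\sum_{i,j}\alpha_{i,j}U_i^*U_j)\geq0$ for all matrix unitaries and given unitaries $u_1,\ldots,u_d\in M$, condition (\ref{cond:acecu}) supplies unitary matrices $U_i$ with $|\tau(u_i^*u_j)-\tr(U_i^*U_j)|<\e$; then $\tau(\sum_{i,j}\alpha_{i,j}u_i^*u_j)$ differs from $\tr(\sum_{i,j}\alpha_{i,j}U_i^*U_j)\geq0$ by at most $\e\sum_{i,j}|\alpha_{i,j}|$, and letting $\e\to0$ forces $\tau(\sum_{i,j}\alpha_{i,j}u_i^*u_j)\geq0$. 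For the converse I would argue contrapositively: if (\ref{cond:acecu}) fails for some $u_1,\ldots,u_d\in M$ and some $\e>0$, I form
\[
C=\mathrm{closure}\{(\tr(U_i^*U_j))_{i,j} : N\in\IN,\ U_1,\ldots,U_d\in\IM_N(\IC)_\ru\},
\]
a convex set (via block sums and repetition of unitaries) consisting of positive semidefinite hermitian matrices with unit diagonal. The failure of (\ref{cond:acecu}) says precisely that the matrix $(\tau(u_i^*u_j))_{i,j}$ lies at sup-distance at least $\e$ from $C$, hence outside it; separating by Theorem~\ref{thm:hbek} in the real space $\IM_d(\IC)_\rh$ and, exactly as in the preceding proof, replacing the resulting coefficient matrix by its hermitian part $(\alpha+\alpha^*)/2$ to discard the real-part operation, I obtain $\alpha\in\IM_d(\IC)_\rh$ and $\lambda\in\IR$ with
\[
\sum_{i,j}\alpha_{i,j}\tau(u_i^*u_j)<\lambda\le\inf_{\gamma\in C}\sum_{i,j}\alpha_{i,j}\gamma_{i,j}.
\]
Since every $\gamma\in C$ and the target point have all diagonal entries equal to $1$, adding a real scalar multiple of the identity to $\alpha$ shifts both sides by the same amount, so I may normalize $\lambda=0$; this $\alpha$ then satisfies the hypothesis of the stated implication while violating its conclusion.

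The second bullet I would deduce from the first together with Theorem~\ref{thm:ksg} applied to $\cA=\IC[\IF_d]$. The free generators $s_i\in\IC[\IF_d]_\ru$ are unitary, so $a=\sum_{i,j}\alpha_{i,j}s_i^*s_j$ is hermitian whenever $\alpha\in\IM_d(\IC)_\rh$, and positive \shom{}s $\pi\colon\IC[\IF_d]\to M$ into finite von Neumann algebras correspond bijectively to $d$-tuples of unitaries $u_i=\pi(s_i)\in M$ (positivity being automatic, as sums of hermitian squares map to positive elements), under which $\pi(a)=\sum_{i,j}\alpha_{i,j}u_i^*u_j$. Hence the equivalence (\ref{ks2})$\Leftrightarrow$(\ref{ks3}) of Theorem~\ref{thm:ksg} identifies the membership $a\in\arch(\IC[\IF_d]_++K_\rh)$ with trace-positivity of $\sum_{i,j}\alpha_{i,j}u_i^*u_j$ over all finite von Neumann algebras $(M,\tau)$ and all unitaries $u_i\in M$. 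Thus the asserted implication ``matricial trace-positivity $\Rightarrow$ arch-membership'' for all $\alpha$ is the same as ``matricial trace-positivity $\Rightarrow$ tracial positivity over every $(M,\tau)$'', which by the first bullet holds exactly when every $(M,\tau)$ satisfies CEC, that is, exactly when CEC holds.

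The step I expect to be the main obstacle is the one already flagged in the preceding proof: extracting from the separating functional a genuinely \emph{hermitian} coefficient matrix $\alpha$ and normalizing the separating constant to $0$. Both rely on the extra structure that every Gram matrix in $C$, as well as the target $(\tau(u_i^*u_j))$, is hermitian and has unit diagonal; without exploiting this one only separates by a complex functional with a nonzero threshold and cannot read off a hermitian $\alpha$ of the exact shape $\sum_{i,j}\alpha_{i,j}s_i^*s_j$ demanded by the statement.
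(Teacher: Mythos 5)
Your proposal is correct and is exactly the argument the paper intends: the paper gives no separate proof of this corollary, stating only that it is the analogue of the preceding one, and you have carried out that analogy faithfully --- replacing condition $(\ref{cond:acech})$ of Theorem~\ref{thm:cecequiv} by $(\ref{cond:acecu})$, separating the Gram matrix $(\tau(u_i^*u_j))_{i,j}$ from the closed convex set of matricial unitary moments, symmetrizing and normalizing the separating functional using hermiticity and the unit diagonal, and then invoking Theorem~\ref{thm:ksg} for $\IC[\IF_d]$ to pass to the archimedean-closure formulation. The point you flag as the main obstacle (extracting a hermitian $\alpha$ with $\lambda=0$) is handled correctly and mirrors the corresponding step in the paper's proof of the preceding corollary.
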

\subsection{Matrix algebras over \spc{}s}
We describe here how to make the $n\times n$ matrix algebra
$\IM_n(\cA)$ over a \spc $\cA$
into a \spc. We note that
$x^*=[x_{j,i}^*]_{i,j}$ for $x=[x_{i,j}]_{i,j}\in\IM_n(\cA)$.
We often identify $\IM_n(\cA)$ with $\IM_n(\ik)\otimes\cA$.
There are two natural choices of the $*$-positive cone for $\IM_n(\cA)$.
The first and the official one is
\begin{align*}
\IM_n(\cA)_+ &= \mbox{$*$-positive cone generated by }\{\diag(a_1,\ldots,a_n) : a_i\in\cA_+\}\\
&=\{ [\sum_{k=1}^m x_{k,i}^*a_kx_{k,j}]_{i,j} : m\in\IN,\ a_k\in\cA_+,\, [x_{k,i}]_{k,i}\in\IM_{m,n}(\cA)\}.
\end{align*}
The second one is larger:
\[
\IM_n(\cA)_+' = \{ [a_{i,j}]\in\IM_n(\cA)_\rh : \p(\sum_{i,j} x_i^*a_{i,j}x_j)\geq0
\mbox{ for all }x_1,\ldots,x_n\in\cA,\ \p\in S(\cA)\}.
\]
(One may notice that the first one is $\IM_n(\ik)\otimes^{\max}\cA$ and
the second is $\IM_n(\ik)\otimes^{\min}\cA$.)
In case where $\cA$ is a \ca, these cones coincide and are archimedean closed.
In either definition, one has $\mathrm{C}^*_\ru(\IM_n(\cA)) = \IM_n(\mathrm{C}^*_\ru(\cA))$,
because every \srep of $\IM_n(\cA)$ is of the form
$\id\otimes\pi\colon\IM_n(\cA)\to \IM_n(\IB(\Hil))\cong\IB(\ell_2^n\otimes\Hil)$.
Therefore, by Theorem~\ref{thm:fundamental},
we arrive at the following conclusion.
\begin{lem}\label{lem:matspc}
For every \spc $\cA$ and every $n$, one has
\[
\mathrm{C}^*_\ru(\IM_n(\cA)) = \IM_n(\mathrm{C}^*_\ru(\cA))
\ \mbox{ and }\
\arch(\IM_n(\cA)_+)=\IM_n(\cA)_+'.
\]
\end{lem}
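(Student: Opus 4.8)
The plan is to deduce both identities from Theorem~\ref{thm:fundamental} applied to the \spc $\IM_n(\cA)$ equipped with its official cone $\IM_n(\cA)_+$. For the first identity, I would use the observation recorded just before the lemma: restricting any positive \srep $\Pi$ of $\IM_n(\cA)$ to the scalar matrix units $\{e_{ij}\otimes1\}$ forces the representation Hilbert space to split as $\ell_2^n\otimes\Hil$, and since $1\otimes\cA$ commutes with $\IM_n(\ik)\otimes1$, one gets $\Pi=\id\otimes\rho$ for a \srep $\rho$ of $\cA$. Testing on $e_{11}\otimes a$ with $a\in\cA_+$ (which lies in $\IM_n(\cA)_+$, being $\diag(a,0,\ldots,0)$) shows $\rho$ is positive, and conversely $\id\otimes\rho$ is positive whenever $\rho$ is, since it sends a diagonal generator $\diag(a_1,\ldots,a_n)$ to $\diag(\rho(a_1),\ldots,\rho(a_n))\geq0$. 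Matching the two \ca semi-norms yields $\mathrm{C}^*_\ru(\IM_n(\cA))=\IM_n(\mathrm{C}^*_\ru(\cA))$. With this in hand, Theorem~\ref{thm:fundamental} identifies
\[
\arch(\IM_n(\cA)_+)=\{\,[a_{ij}]\in\IM_n(\cA)_\rh : [\iota(a_{ij})]\geq0\text{ in }\IM_n(\mathrm{C}^*_\ru(\cA))\,\},
\]
so it remains to prove that $[\iota(a_{ij})]\geq0$ in $\IM_n(\mathrm{C}^*_\ru(\cA))$ if and only if $[a_{ij}]\in\IM_n(\cA)_+'$.

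The bridge between the two sides is the elementary identity
\[
\p\Big(\sum_{i,j} x_i^*a_{ij}x_j\Big)=\sum_{i,j}\ip{\pi_\p(a_{ij})\hat{x}_j,\hat{x}_i}=\ip{(\id\otimes\pi_\p)([a_{ij}])\,\xi,\xi},
\]
valid for every $\p\in S(\cA)$ and $x_1,\ldots,x_n\in\cA$, where $\xi=\sum_i e_i\otimes\hat{x}_i\in\ell_2^n\otimes L^2(\cA,\p)$. For the ``only if'' direction, if $[\iota(a_{ij})]\geq0$ then composing with the extension $\bar{\pi}_\p$ (which satisfies $\bar{\pi}_\p\circ\iota=\pi_\p$) makes $(\id\otimes\pi_\p)([a_{ij}])=(\id\otimes\bar{\pi}_\p)([\iota(a_{ij})])$ positive, so the left-hand side above is $\geq0$; as $\p$ and the $x_i$ were arbitrary, $[a_{ij}]\in\IM_n(\cA)_+'$.

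The ``if'' direction is where the real work lies. Assuming $[a_{ij}]\in\IM_n(\cA)_+'$, the identity shows $\ip{(\id\otimes\pi_\p)([a_{ij}])\,\xi,\xi}\geq0$ for every $\xi$ of the special form $\sum_i e_i\otimes\hat{x}_i$; since $\{\hat{x}:x\in\cA\}$ is dense in $L^2(\cA,\p)$, such $\xi$ are dense in $\ell_2^n\otimes L^2(\cA,\p)$, and continuity upgrades this to $(\id\otimes\pi_\p)([a_{ij}])\geq0$ for \emph{every} $\p\in S(\cA)$. The remaining step is to pass from the GNS representations $\pi_\p$ to an arbitrary positive \srep $\rho$ of $\cA$: using the fact recorded in the GNS subsection that every positive \srep decomposes as a direct sum of cyclic ones, each unitarily equivalent to some $\pi_\p$, a term-by-term computation gives $(\id\otimes\rho)([a_{ij}])\geq0$ for all positive $\rho$, and in particular for a faithful $\rho=\sigma\circ\iota$, whence $[\iota(a_{ij})]\geq0$ in $\IM_n(\mathrm{C}^*_\ru(\cA))$.

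I expect the main obstacle to be precisely this density-plus-direct-sum reduction: the vectors attached to the legs in a general representation need not come from $\cA$, so one cannot apply the defining inequality of $\IM_n(\cA)_+'$ directly, and the argument genuinely relies on first reducing to cyclic (hence GNS) representations and only then invoking density of the $\hat{x}_i$. As a bonus, the same computation shows that $\id\otimes\rho$ is positive for the larger cone $\IM_n(\cA)_+'$ as well, which reconciles the first identity with ``either definition'' of the matricial cone.
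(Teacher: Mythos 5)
Your proof is correct and follows the same route as the paper, which disposes of the lemma in one sentence before its statement: every \srep of $\IM_n(\cA)$ splits as $\id\otimes\pi$ via the matrix units, giving $\mathrm{C}^*_\ru(\IM_n(\cA))=\IM_n(\mathrm{C}^*_\ru(\cA))$, and then Theorem~\ref{thm:fundamental} applied to $\IM_n(\cA)$ yields the second identity. The GNS-plus-density argument you spell out for identifying the preimage of $\IM_n(\mathrm{C}^*_\ru(\cA))_+$ with $\IM_n(\cA)_+'$ is exactly the detail the paper leaves implicit (it is the same observation made after the GNS construction for $(\cA\otimes^{\min}\cB)_+$), and your handling of it is sound.
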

\subsection{Completely positive maps}
Completely positive maps are essential tools in study of positivity and tensor products.
A linear map $\p\colon\cA\to\cB$ between
\spc{}s is said to be \emph{completely positive}
(c.p.) if it is self-adjoint and
$\id\otimes\p\colon \IM_n(\cA)\to\IM_n(\cB)$ is positive for every $n$.
It is not too hard to see that states and positive \shom{}s are c.p.

We take back the definition of the complexification $\cA_{\IC}$
of a real \spc $\cA_{\IR}$ and redefine
\[
(\cA_{\IC})_+=\{ x + \ii y :
x\in(\cA_{\IR})_+,\ y=-y^*\in\cA_{\IR}\mbox{ such that }
\left[\begin{smallmatrix} x & y \\ -y & x\end{smallmatrix}\right]\geq0\}.
\]
Since this coincides with the previous definition in case of \ca{}s, they also coincide
for \spc{}s modulo archimedean closures. The reason of this awkward
replacement is to assure the following lemma holds true.

\begin{lem}\label{lem:cprc}
Let $\p_{\IR}\colon \cA_{\IR}\to\cB_{\IR}$ be a self-adjoint map between
real \spc{}s. Then, its complexification $\p_{\IC}\colon \cA_{\IC}\to\cB_{\IC}$
is c.p.\ if and only if $\p_{\IR}$ is so.
\end{lem}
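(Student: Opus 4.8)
The plan is to reduce everything to the single structural fact that complexification commutes with matrix amplification \emph{including the positive cones}, and then to read the Lemma off formally. Precisely, I would first prove that for every real \spc $\cA_{\IR}$ and every $n$ one has $\IM_n(\cA_{\IR})_{\IC}=\IM_n(\cA_{\IC})$ as \spc{}s, i.e.\ under the canonical identification $\IM_n(\cA_{\IC})=\IM_n(\cA_{\IR})_{\IC}$ the official matrix cone $\IM_n(\cA_{\IC})_+$ coincides with the complexification cone of the real \spc $\IM_n(\cA_{\IR})$. Writing a matrix over $\cA_{\IC}$ as $\mathcal{X}+\ii\mathcal{Y}$ with $\mathcal{X},\mathcal{Y}\in\IM_n(\cA_{\IR})$, the latter cone consists by definition of those $\mathcal{X}+\ii\mathcal{Y}$ with $\mathcal{X}\in\IM_n(\cA_{\IR})_+$, $\mathcal{Y}=-\mathcal{Y}^*$, and $\left[\begin{smallmatrix}\mathcal{X}&\mathcal{Y}\\-\mathcal{Y}&\mathcal{X}\end{smallmatrix}\right]\in\IM_{2n}(\cA_{\IR})_+$. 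I would also record the obvious identity $\id_n\otimes\p_{\IC}=(\id_n\otimes\p_{\IR})_{\IC}$, which exhibits $\id_n\otimes\p_{\IC}$ as the complexification of the self-adjoint map $\id_n\otimes\p_{\IR}$.

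Granting this, both implications are immediate. If $\p_{\IC}$ is c.p.\ and $X\in\IM_n(\cA_{\IR})_+$, then $X\in\IM_n(\cA_{\IC})_+$ (take $\mathcal{Y}=0$), so $(\id_n\otimes\p_{\IR})(X)=(\id_n\otimes\p_{\IC})(X)$ lies in $\IM_n(\cB_{\IC})_+$; being real, it lies in $\IM_n(\cB_{\IR})\cap\IM_n(\cB_{\IC})_+=\IM_n(\cB_{\IR})_+$ (the identity $\cA_{\IR}\cap(\cA_{\IC})_+=(\cA_{\IR})_+$ applied to $\IM_n(\cB_{\IR})$), so $\p_{\IR}$ is c.p. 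Conversely, if $\p_{\IR}$ is c.p.\ and $C\in\IM_n(\cA_{\IC})_+$, write $C=\mathcal{X}+\ii\mathcal{Y}$ as above; then $\id_{2n}\otimes\p_{\IR}$ is positive and carries $\left[\begin{smallmatrix}\mathcal{X}&\mathcal{Y}\\-\mathcal{Y}&\mathcal{X}\end{smallmatrix}\right]$ into $\IM_{2n}(\cB_{\IR})_+$, which is exactly the assertion that $(\id_n\otimes\p_{\IC})(C)$ lies in the complexification cone of $\IM_n(\cB_{\IR})$, i.e.\ in $\IM_n(\cB_{\IC})_+$. Note that the doubling $n\mapsto 2n$ here is precisely why the statement concerns complete positivity and would fail for positivity alone.

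It remains to prove the cone identity. The inclusion of $\IM_n(\cA_{\IC})_+$ into the complexification cone is routine: the latter is the preimage of $\IM_{2n}(\cA_{\IR})_+$ under the unital real \shom $\IM_n(\cA_{\IC})\to\IM_{2n}(\cA_{\IR})$ obtained by applying the realification $x+\ii y\mapsto\left[\begin{smallmatrix}x&y\\-y&x\end{smallmatrix}\right]$ entrywise, hence a $*$-positive cone, and it plainly contains every diagonal generator $\diag(c_1,\ldots,c_n)$ with $c_i\in(\cA_{\IC})_+$; therefore it contains the cone they generate. The reverse inclusion is the crux, and is exactly what the reshuffled definition of $(\cA_{\IC})_+$ is engineered to deliver. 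Given $\mathcal{X}+\ii\mathcal{Y}$ in the complexification cone, I would expand $M:=\left[\begin{smallmatrix}\mathcal{X}&\mathcal{Y}\\-\mathcal{Y}&\mathcal{X}\end{smallmatrix}\right]=\sum_k v_k^*a_kv_k$ with $a_k\in(\cA_{\IR})_+$ and row vectors $v_k=(v_{k,1},\ldots,v_{k,2n})$ over $\cA_{\IR}$, and put $w_{k,q}=v_{k,q}+\ii\,v_{k,n+q}\in\cA_{\IC}$. Using the block symmetries $M_{p,q}=M_{p+n,q+n}=\mathcal{X}_{p,q}$ and $M_{p,q+n}=-M_{p+n,q}=\mathcal{Y}_{p,q}$, a direct computation yields the folding identity $w_{k,p}^*a_kw_{k,q}=2(\mathcal{X}_{p,q}+\ii\mathcal{Y}_{p,q})$, whence $\mathcal{X}+\ii\mathcal{Y}=\tfrac12\sum_k[w_{k,p}^*a_kw_{k,q}]_{p,q}\in\IM_n(\cA_{\IC})_+$. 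I expect this folding identity to be the sole genuine obstacle; once it is in place, the cone identity, and hence the Lemma, follow.
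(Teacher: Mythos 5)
Your proof is correct. The paper itself offers no argument for this lemma (it is stated immediately after the redefinition of $(\cA_{\IC})_+$ with only the remark that the redefinition is designed to make it true), so there is no official proof to compare against; your write-up supplies the missing details. The one genuinely nontrivial point is exactly where you locate it: the identification of $\IM_n(\cA_{\IC})_+$ with the complexification cone of $\IM_n(\cA_{\IR})$. The easy inclusion goes through as you say, since the entrywise realification $x+\ii y\mapsto\left[\begin{smallmatrix} x & y \\ -y & x\end{smallmatrix}\right]$ is a unital real \shom (so the preimage of $\IM_{2n}(\cA_{\IR})_+$ is a $*$-positive cone, automatically contained in the hermitian part by injectivity) and it visibly absorbs the diagonal generators $\diag(c_1,\ldots,c_n)$. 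The folding identity for the reverse inclusion checks out: with $M=\sum_k v_k^*a_kv_k$ and $w_{k,q}=v_{k,q}+\ii v_{k,n+q}$ one gets
\[
\sum_k w_{k,p}^*a_kw_{k,q}=M_{p,q}+M_{n+p,n+q}+\ii\bigl(M_{p,n+q}-M_{n+p,q}\bigr)=2\bigl(X_{p,q}+\ii Y_{p,q}\bigr),
\]
and it is important (and correct) that this computation only uses the block symmetry of the \emph{summed} matrix $M$, not of the individual terms $v_k^*a_kv_k$, which need not have it. (Minor slip: the displayed identity holds for the sum over $k$, not termwise, as your next line already reflects.) Given the cone identity, the two implications are formal, and your aside that the passage $n\mapsto 2n$ is why the statement is about complete rather than mere positivity is a fair explanation of why the paper needed the ``awkward replacement'' of the cone in the first place.
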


The GNS construction for states generalizes to
the Stinespring Dilation Theorem for c.p.\ maps.
\begin{thm}\label{thm:stinespring}
Let $\cA$ be a \spc and
$\p\colon\cA\to\IB(\Hil)$ be a c.p.\ map,
then there are a Hilbert space $\hat{\Hil}$, a positive \srep
$\pi\colon\cA\to\IB(\hat{\Hil})$, and $V\in\IB(\Hil,\hat{\Hil})$ such that
\[
\p(a)=V^*\pi(a)V.
\]
for $a\in\cA$. In particular, if $\p$ is unital, then $V$ is an isometry.
\end{thm}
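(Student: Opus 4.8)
The plan is to run a Stinespring-type GNS construction on the algebraic tensor product $\cA\otimes\Hil$ (over $\ik$). On it I define the sesquilinear form determined by
\[
\ip{a\otimes\xi,\,b\otimes\eta}=\ip{\p(b^*a)\xi,\eta}_\Hil,
\]
extended bilinearly. Self-adjointness of $\p$ gives $\p(b^*a)=\p(a^*b)^*$, so the form is conjugate-symmetric; the real content is that it is positive semi-definite, and this is exactly where complete positivity enters. Having established positivity, I separate off the null space and complete to obtain a Hilbert space $\hat\Hil$, write $\widehat{b\otimes\xi}$ for the image of $b\otimes\xi$, let $\pi(a)$ act by left multiplication, and recover $\p$ by compressing along the map $\xi\mapsto\widehat{1\otimes\xi}$.

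The key observation, which does all the work at once, is the following. For $c\in\cA_+$ and $b_1,\dots,b_n\in\cA$, the matrix $[\,b_i^*c\,b_j\,]_{i,j}$ equals $y^*\diag(c,0,\dots,0)\,y$, where $y\in\IM_n(\cA)$ has first row $(b_1,\dots,b_n)$ and is zero elsewhere; hence $[\,b_i^*c\,b_j\,]_{i,j}\in\IM_n(\cA)_+$ by the very description of the matrix cone. Since $\p$ is c.p., $\id\otimes\p$ is positive, so $[\,\p(b_i^*c\,b_j)\,]_{i,j}$ is positive in $\IM_n(\IB(\Hil))=\IB(\Hil^{\,n})$, i.e.
\[
\sum_{i,j}\ip{\p(b_i^*c\,b_j)\xi_j,\xi_i}\ge0\qquad\text{for all }\xi_1,\dots,\xi_n\in\Hil.
\]
Taking $c=1$ gives $\ip{u,u}\ge0$ for $u=\sum_i b_i\otimes\xi_i$, which is positivity of the form. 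Taking $c=a$ for $a\in\cA_+$ gives $\ip{\pi(a)\hat u,\hat u}\ge0$, so $\pi$ is positive. Finally, since $\cA=\cA^{\mathrm{bdd}}$ (the Combes axiom), every $a$ satisfies $a^*a\le R1$ for some $R>0$, and applying the observation with $c=R1-a^*a\in\cA_+$ yields
\[
R\|u\|^2-\|\pi(a)u\|^2=\sum_{i,j}\ip{\p(b_j^*(R1-a^*a)b_i)\xi_i,\xi_j}\ge0.
\]
This single inequality shows both that left multiplication preserves the null space of the seminorm (so $\pi(a)$ descends to the quotient) and that $\|\pi(a)\|\le R^{1/2}$ (so it extends to $\hat\Hil$). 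That $\pi$ is a unital \shom{} is immediate from $\pi(a)\widehat{b\otimes\xi}=\widehat{ab\otimes\xi}$, and $\pi(a)^*=\pi(a^*)$ follows by a one-line check against the defining form; together with the positivity just proved, $\pi$ is a positive \srep.

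To finish, I set $V\colon\Hil\to\hat\Hil$, $V\xi=\widehat{1\otimes\xi}$, which is bounded since $\|V\xi\|^2=\ip{\p(1)\xi,\xi}\le\|\p(1)\|\,\|\xi\|^2$. Then
\[
\ip{V^*\pi(a)V\xi,\eta}=\ip{\widehat{a\otimes\xi},\widehat{1\otimes\eta}}=\ip{\p(a)\xi,\eta},
\]
so $V^*\pi(a)V=\p(a)$; and $V^*V=\p(1)$, which equals $1$ when $\p$ is unital, making $V$ an isometry. I expect no essential obstacle here, as the argument runs entirely parallel to the GNS construction already carried out; the only genuinely new ingredient is the identification of the required positivity with complete positivity through the explicit form of $\IM_n(\cA)_+$, and the sole place demanding care is the boundedness step, where the Combes axiom supplies the bound $R$.
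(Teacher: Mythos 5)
Your proposal is correct and follows the same route as the paper: a GNS-type construction on $\cA\otimes\Hil$ with the form $\ip{y\otimes\eta,x\otimes\xi}=\ip{\p(x^*y)\eta,\xi}$, whose positivity is reduced via complete positivity of $\p$ to the membership $[x_i^*x_j]_{i,j}\in\IM_n(\cA)_+$. Your write-up merely fills in details the paper leaves implicit (well-definedness and boundedness of $\pi(a)$ via the Combes axiom with $c=R1-a^*a$, and positivity of $\pi$ via $c=a$), all of which check out.
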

\begin{proof}
We introduce the semi-inner product on the algebraic tensor product $\cA\otimes\Hil$
by $\ip{y\otimes\eta,x\otimes\xi}=\ip{\p(x^*y)\eta,\xi}$ (and extended by linearity).
That
\[
\ip{\sum_j x_j\otimes\xi_j,\sum_i x_i\otimes\xi_i}
=\ip{ [\p(x_i^*x_j)]_{i,j}\underline{\xi},\underline{\xi}}\geq0,
\mbox{ where }\underline{\xi}=(\xi_1,\ldots,\xi_n)^T,
\]
follows from the fact $[x_i^*x_j]_{i,j}\in\IM_n(\cA)_+$.
Now, $\cA\otimes\Hil$ gives rise to a Hilbert space $\hat{\Hil}$
and $V\in\IB(\Hil,\hat{\Hil})$, $\xi\mapsto1\otimes\xi$.
The \srep $\pi\colon\cA\to \IB(\hat{\Hil})$,
$\pi(a)(x\otimes\xi)=ax\otimes\xi$, is positive and satisfies $V^*\pi(a)V=\p(a)$.
\end{proof}

Let $\G$ be a discrete group. We say an operator-valued function $f\colon\G\to\IB(\Hil)$
is of \emph{positive type} if $[f(x^{-1}y)]_{x,y\in E}\in\IB(\ell_2 E)\otimes\IB(\Hil)$
is positive (and self-adjoint) for every finite subset $E\subset \G$.
It is not too hard to see the following holds.
\begin{cor}\label{cor:posdeffunct}
For $f\colon\G\to\IB(\Hil)$, the following are equivalent.
\begin{enumerate}[$(1)$]
\item
$f$ is of positive type.
\item
$\p_f\colon \ik[\G]\ni g\mapsto\sum_t g(t)f(t)\in\IB(\Hil)$ is c.p.
\item
There are a unitary representation $\pi$ of $\G$ on $\hat{\Hil}$ and
$V\in \IB(\Hil,\hat{\Hil})$ such that $f(t)=V^*\pi(t)V$ for $t\in\G$.
\end{enumerate}
\end{cor}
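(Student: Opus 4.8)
The plan is to treat the equivalence $(1)\Leftrightarrow(2)$ as a translation between two ways of writing down the same positivity condition, to obtain $(2)\Rightarrow(3)$ from the Stinespring Dilation Theorem (Theorem~\ref{thm:stinespring}), and finally to verify $(3)\Rightarrow(1)$ by a one-line square computation. Throughout I would use that $\delta_t^*\delta_t=1=\delta_t\delta_t^*$ in $\ik[\G]$ and that $\delta_x^*\delta_y=\delta_{x^{-1}y}$, so that evaluating $\p_f$ on point masses reproduces $f$.

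For $(1)\Leftrightarrow(2)$, I would first unwind complete positivity of $\p_f$. Since $\ik[\G]_+$ consists of sums of hermitian squares, the cone $\IM_n(\ik[\G])_+$ is generated by the matrices $[g_i^*g_j]_{i,j}$ with $g_1,\dots,g_n\in\ik[\G]$; hence $\p_f$ is c.p.\ if and only if $[\p_f(g_i^*g_j)]_{i,j}\geq0$ in the \ca $\IM_n(\IB(\Hil))\cong\IB(\Hil^{\oplus n})$ for every such family, i.e.
\[
\sum_{i,j}\ip{\p_f(g_i^*g_j)\eta_j,\eta_i}\geq0
\qquad\text{for all }\eta_1,\dots,\eta_n\in\Hil.
\]
Specializing $g_i=\delta_{s_i}$ gives $\p_f(\delta_{s_i}^*\delta_{s_j})=f(s_i^{-1}s_j)$, so this is exactly the statement that $[f(s_i^{-1}s_j)]_{i,j}\geq0$, i.e.\ that $f$ is of positive type. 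Conversely, writing a general $g_i=\sum_s c_{i,s}\delta_s$ and setting $\mu_s=\sum_i c_{i,s}\eta_i$, one collects terms into
\[
\sum_{i,j}\ip{\p_f(g_i^*g_j)\eta_j,\eta_i}
=\sum_{s,t}\ip{f(s^{-1}t)\mu_t,\mu_s},
\]
which is non-negative once $f$ is of positive type. The self-adjointness clauses match as well: $\p_f$ being self-adjoint means $f(s^{-1})=f(s)^*$, which is precisely the self-adjointness built into the positive-type condition.

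For $(2)\Rightarrow(3)$, I would apply Theorem~\ref{thm:stinespring} to the c.p.\ map $\p_f$ on the \spc $\ik[\G]$, obtaining a Hilbert space $\hat\Hil$, a positive \srep $\rho\colon\ik[\G]\to\IB(\hat\Hil)$, and $V\in\IB(\Hil,\hat\Hil)$ with $\p_f(g)=V^*\rho(g)V$. Since each $\delta_t$ is a unitary element of $\ik[\G]$ and $\rho$ is a unital \shom, $\pi(t):=\rho(\delta_t)$ is a unitary representation of $\G$ on $\hat\Hil$ (this is the correspondence underlying $\mathrm{C}^*_\ru(\ik[\G])=\mathrm{C}^*\G$ from Example~\ref{exa:grpalg}). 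Evaluating at the point mass then yields $f(t)=\p_f(\delta_t)=V^*\pi(t)V$, which is (3).

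Finally, for $(3)\Rightarrow(1)$, given $f(t)=V^*\pi(t)V$ I would compute, for a finite $E\subset\G$ and vectors $(\eta_x)_{x\in E}$, using $\pi(x^{-1}y)=\pi(x)^*\pi(y)$,
\[
\sum_{x,y\in E}\ip{f(x^{-1}y)\eta_y,\eta_x}
=\sum_{x,y\in E}\ip{\pi(y)V\eta_y,\pi(x)V\eta_x}
=\Bigl\|\sum_{x\in E}\pi(x)V\eta_x\Bigr\|^2\geq0,
\]
so $f$ is of positive type. I do not expect a genuine obstacle: once Theorem~\ref{thm:stinespring} and the group-algebra dictionary are granted the argument is mechanical. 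The only points needing care are the identification of the generators of $\IM_n(\ik[\G])_+$ in the reformulation of c.p., and the separate treatment of the self-adjointness condition in the real case $\ik=\IR$, where positivity of the operator matrix $[f(x^{-1}y)]$ must be understood to include hermiticity.
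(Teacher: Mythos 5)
Your argument is correct and is exactly the route the paper intends: the corollary is stated immediately after Theorem~\ref{thm:stinespring} with no written proof (``it is not too hard to see''), the key implication $(2)\Rightarrow(3)$ being Stinespring applied to $\p_f$ together with the fact that the $\delta_t$ are unitaries in $\ik[\G]$. Your identification of the generators of $\IM_n(\ik[\G])_+$ as sums of matrices $[g_i^*g_j]_{i,j}$, the resummation via $\mu_s=\sum_i c_{i,s}\eta_i$, and the explicit handling of self-adjointness (needed when $\ik=\IR$) all check out against the paper's definitions.
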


We denote by $S_n(\cA)$ the set of unital c.p.\ maps from $\cA$ into $\IM_n(\ik)$.

\begin{thm}\label{thm:tensorize}
Let $\cA_i$ and $\cB_i$ be \spc{}s and
$\p_i\colon\cA_i\to\cB_i$ be c.p.\ maps.
Then, $\p_1\otimes\p_2\colon \cA_1\otimes\cA_2\to\cB_1\otimes\cB_2$
is c.p.\ with respect to each of $\max$-$\max$ and $\min$-$\min$.
Moreover, $(\cA\otimes^{\min}\cB)_+$ coincides with
\[
\{ c\in(A\otimes B)_\rh :
 (\p\otimes\psi)(c)\geq0\mbox{ for all }m,n\in\IN,\ \p\in S_m(\cA),\ \psi\in S_n(\cB)\}.
\]
\end{thm}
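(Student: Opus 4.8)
The plan is to treat the two assertions separately: the completely positive tensorization is the \spc{} counterpart of a standard fact about C$^*$-algebras and I will reduce it to that, whereas the description of $(\cA\otimes^{\min}\cB)_+$ by unital c.p.\ maps into matrix algebras carries the real content and will be extracted from the GNS/Stinespring machinery together with the Eidelheit--Kakutani separation already in use. Throughout I write $\iota$ for the canonical maps into universal C$^*$-algebras, and I freely use that a composition of c.p.\ maps is c.p.\ (apply $\id_n$ and compose positive maps) and that positive \shom{}s, in particular the $\iota$'s and honest representations, are c.p. For the first assertion I first upgrade each $\p_i$ to a c.p.\ map between universal \ca{}s. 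The composition $\iota_{\cB_i}\circ\p_i\colon\cA_i\to\mathrm{C}^*_\ru(\cB_i)\subseteq\IB(K_i)$ is c.p., so Stinespring (Theorem~\ref{thm:stinespring}) writes it as $V_i^*\pi_i(\cdot)V_i$ with $\pi_i$ a positive \srep, which extends to $\bar\pi_i\colon\mathrm{C}^*_\ru(\cA_i)\to\IB(\hat K_i)$; then $\bar\p_i:=V_i^*\bar\pi_i(\cdot)V_i$ is c.p.\ on $\mathrm{C}^*_\ru(\cA_i)$, and since it is bounded and agrees with $\iota_{\cB_i}\circ\p_i$ on the dense subalgebra $\iota_{\cA_i}(\cA_i)$, its range lies in the closed subalgebra $\mathrm{C}^*_\ru(\cB_i)$. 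By the C$^*$-theory of tensor products of c.p.\ maps (\cite{bo,pisier}), $\bar\p_1\otimes\bar\p_2$ is c.p.\ from $\mathrm{C}^*_\ru(\cA_1)\otimes_\alpha\mathrm{C}^*_\ru(\cA_2)$ to $\mathrm{C}^*_\ru(\cB_1)\otimes_\alpha\mathrm{C}^*_\ru(\cB_2)$ for $\alpha\in\{\min,\max\}$. Restricting to $\cA_1\otimes\cA_2$ recovers $\p_1\otimes\p_2$, and by Theorem~\ref{thm:fundamental} and Lemma~\ref{lem:matspc} the positivity of $(\id_k\otimes(\p_1\otimes\p_2))(c)$ is read off in $\IM_k(\mathrm{C}^*_\ru(\cB_1)\otimes_\alpha\mathrm{C}^*_\ru(\cB_2))$; for $\alpha=\min$ the cone $(\cB_1\otimes^{\min}\cB_2)_+$ is the preimage of a C$^*$-cone, hence archimedean closed, so the conclusion is exact, while for $\alpha=\max$ one lands a priori in the archimedean closure of the generated cone.

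For the ``moreover'' the inclusion of $(\cA\otimes^{\min}\cB)_+$ in the displayed set is the easy half. Given unital $\p\in S_m(\cA)$ and $\psi\in S_n(\cB)$, Stinespring writes $\p=W_1^*\sigma_1(\cdot)W_1$ and $\psi=W_2^*\sigma_2(\cdot)W_2$ with $W_i$ isometries and $\sigma_i$ positive \srep{}s. Since the minimal C$^*$-tensor norm is realised on the Hilbert-space tensor product, $\sigma_1\otimes\sigma_2$ extends to a representation of $\mathrm{C}^*_\ru(\cA)\otimes_{\min}\mathrm{C}^*_\ru(\cB)$ on $H_1\otimes H_2$, hence is a positive \srep of $\cA\otimes^{\min}\cB$ and sends $c$ to a positive operator. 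Therefore $(\p\otimes\psi)(c)=(W_1\otimes W_2)^*(\sigma_1\otimes\sigma_2)(c)(W_1\otimes W_2)\ge0$.

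The reverse inclusion is the crux, and here I invoke the state-description of $(\cA\otimes^{\min}\cB)_+$ recorded after the GNS construction: $c$ lies in this cone iff $(\p\otimes\psi)(z^*cz)\ge0$ for all $\p\in S(\cA)$, $\psi\in S(\cB)$ and all $z\in\cA\otimes\cB$. Assume $c$ lies in the displayed set but, for contradiction, $(\p\otimes\psi)(z^*cz)<0$ for some states and some $z=\sum_{k=1}^{r}a_k\otimes b_k$. Form
\[
\Phi_0\colon a\mapsto[\p(a_i^*aa_j)]_{i,j=1}^{r},\qquad
\Psi_0\colon b\mapsto[\psi(b_i^*bb_j)]_{i,j=1}^{r},
\]
which are c.p.\ into $\IM_r(\ik)$, being compressions $W^*\pi_\p(\cdot)W$, $W e_j=\pi_\p(a_j)\hat1$, of the GNS representations of $\p$ and $\psi$. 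A direct expansion of $z^*cz$ gives $(\p\otimes\psi)(z^*cz)=\ip{(\Phi_0\otimes\Psi_0)(c)\eta,\eta}$ for the diagonal vector $\eta=\sum_{k=1}^{r}e_k\otimes e_k\in\ik^r\otimes\ik^r$. Because $\Phi_0(a)=W^*\pi_\p(a)W$ is supported on $\mathrm{ran}(\Phi_0(1))$ (and likewise for $\Psi_0$), conjugating $\Phi_0,\Psi_0$ by the inverse square roots of $\Phi_0(1),\Psi_0(1)$ on these support projections produces unital c.p.\ maps $\Phi\in S_{m'}(\cA)$, $\Psi\in S_{n'}(\cB)$, and $(\Phi_0\otimes\Psi_0)(c)$ already annihilates the orthogonal complements, so replacing $\eta$ by its compression does not alter $\ip{(\Phi_0\otimes\Psi_0)(c)\eta,\eta}$ while the conjugation is a congruence preserving positivity. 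Thus $(\Phi\otimes\Psi)(c)\ge0$ would force $(\p\otimes\psi)(z^*cz)\ge0$, a contradiction; hence $(\Phi\otimes\Psi)(c)\not\ge0$, contradicting $c$ being in the displayed set. Therefore $(\p\otimes\psi)(z^*cz)\ge0$ always, and $c\in(\cA\otimes^{\min}\cB)_+$.

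I expect the normalisation step to be the main obstacle: when the Gram matrices $\Phi_0(1)=[\p(a_i^*a_j)]$ or $\Psi_0(1)=[\psi(b_i^*b_j)]$ are singular one must descend to their support projections and verify that the diagonal vector $\eta$ may be replaced by its compression without changing the coefficient $\ip{(\Phi_0\otimes\Psi_0)(c)\eta,\eta}$, which is precisely where the compatibility between the conjugation and $\eta$ must be checked. A secondary point to handle with care is the archimedean-closure gap in the $\max$-$\max$ case flagged above, where the reduction through the universal \ca{}s only places the image in $\arch$ of the generated cone.
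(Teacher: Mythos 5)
The paper offers no proof to compare against --- it says only that the $\max$-$\max$ case is ``routine (although a bit painful)'' and that the $\min$-$\min$ case ``follows from Theorem~\ref{thm:stinespring}'' --- so your write-up is strictly more detailed than the original. Your $\min$-$\min$ argument and your proof of the ``moreover'' clause are correct and are exactly in the spirit of the paper's hint: Stinespring dilation plus $\min$-continuity of $\bar\sigma_1\otimes\bar\sigma_2$ for the easy inclusion, and for the hard inclusion the observation recorded after the GNS construction together with the compressions $\Phi_0=W^*\pi_\p(\cdot)W$, the identity $(\p\otimes\psi)(z^*cz)=\ip{(\Phi_0\otimes\Psi_0)(c)\eta,\eta}$, and normalisation. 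The normalisation worry you single out is harmless: if $\Phi_0(1)=0$ then $\Phi_0\equiv0$ by Lemma~\ref{lem:corr} and the pairing vanishes, and otherwise the congruence $(\Phi_0\otimes\Psi_0)(c)=(h_1^{1/2}\otimes h_2^{1/2})(\Phi\otimes\Psi)(c)(h_1^{1/2}\otimes h_2^{1/2})$, with everything supported on $p\ik^r\otimes q\ik^r$, gives $(\Phi\otimes\Psi)(c)\geq0\Rightarrow(\Phi_0\otimes\Psi_0)(c)\geq0$ with no need to discuss the vector $\eta$ at all.

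The one genuine deficiency is the one you flag but do not repair: for $\max$-$\max$, passing through $\mathrm{C}^*_\ru(\cA_i)$ only shows that $(\id\otimes\p_1\otimes\p_2)(c)$ has positive image in $\IM_k(\mathrm{C}^*_\ru(\cB_1)\otimes_{\max}\mathrm{C}^*_\ru(\cB_2))$, i.e.\ lies in $\arch(\IM_k(\cB_1\otimes^{\max}\cB_2)_+)$, and the $\max$ cone is genuinely not archimedean closed in general (the paper makes a point of this after Example~\ref{exa:tensor} and in Lemma~\ref{lem:matspc}). Under the paper's definition, ``positive'' means landing in the cone itself, so this route does not prove the stated assertion; it is precisely here that the intended ``routine but painful'' argument must be algebraic rather than $\mathrm{C}^*$-algebraic. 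The repair is a direct computation: factor $\p_1\otimes\p_2=(\id\otimes\p_2)\circ(\p_1\otimes\id)$ and take a generator $x^*(a\otimes b)x$ of $\IM_k(\cA_1\otimes^{\max}\cA_2)_+$ with $a\in(\cA_1)_+$, $b\in(\cA_2)_+$ and $x_j=\sum_{p=1}^P y_{j,p}\otimes z_{j,p}$; then
\[
(\id\otimes\p_1\otimes\id)\bigl(x^*(a\otimes b)x\bigr)
=\Bigl[\sum_{p,q}\p_1(y_{i,p}^*ay_{j,q})\otimes z_{i,p}^*bz_{j,q}\Bigr]_{i,j},
\]
and since $[\p_1(y_{i,p}^*ay_{j,q})]_{(i,p),(j,q)}\in\IM_{kP}(\cB_1)_+$ by complete positivity of $\p_1$, it can be written as $\sum_r[w_{r,(i,p)}^*c_rw_{r,(j,q)}]$ with $c_r\in(\cB_1)_+$; substituting back exhibits the element as $\sum_r\tilde w_r^*(c_r\otimes b)\tilde w_r$ with $\tilde w_{r,j}=\sum_p w_{r,(j,p)}\otimes z_{j,p}$, which lies in $\IM_k(\cB_1\otimes^{\max}\cA_2)_+$ on the nose. (A smaller caveat of the same nature: your claim that the $\min$ conclusion is ``exact'' is literally true only at matrix level one, since the paper's official cone $\IM_k(\cB_1\otimes^{\min}\cB_2)_+$ for $k\geq2$ is again the generated one; one should either adopt the preimage convention at every matrix level, as the paper does later for operator systems, or accept the same archimedean closure there.)
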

\begin{proof}
The verification of $\max$-$\max$ case is routine (although a bit painful).
The case for $\min$-$\min$ follows from Theorem~\ref{thm:stinespring}.
We omit the details.
\end{proof}

The u.c.p.\ (unital and c.p.) maps are incorporated into the free product as well.
\begin{thm}[\cite{boca}]\label{thm:boca}
Let $\cA_i$ be \spc{}s and
$\cA=\cA_1\ast\cA_2$ be the unital free product.
Let $\psi_i\in S(\cA_i)$ be fixed and $\cA_i^\bubble=\ker\psi_i$ so that
\[
\cA=\lh\left(\ik1\cup\bigcup_n\{ x_1\cdots x_n :
  x_j\in \cA_{i_j}^\bubble,\ i_1\neq i_2,\ i_2\neq i_3,\ldots,i_{n-1}\neq i_n\}\right).
\]
Let $\p_i\colon\cA_i\to\IB(\Hil)$ be u.c.p.\ maps.
Then, the free product map $\p\colon\cA\to\IB(\Hil)$ defined by
\[
\p( x_1\cdots x_n ) = \p_{i_1}(x_1)\cdots\p_{i_n}(x_n)
\]
is a u.c.p.\ map. In particular, $\p_i$ has a joint u.c.p.\ extension $\p$.
\end{thm}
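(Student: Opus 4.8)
The plan is to produce a Stinespring-type dilation of the candidate map $\p$ and then invoke the easy converse to Theorem~\ref{thm:stinespring}: any compression $a\mapsto V^*\pi(a)V$ of a positive \srep $\pi$ by a bounded operator $V$ is automatically c.p. First I would record that $\p$ is well defined and unital, since the alternating words together with $1$ span $\cA$, so the formula together with $\p(1)=1$ determines a linear map, and since each $\p_i$ is unital one checks $\p|_{\cA_i}=\p_i$. Because a positive \srep of the free product $\cA=\cA_1*\cA_2$ is the same datum as a pair of positive \srep{}s of $\cA_1$ and $\cA_2$ on a common Hilbert space, the whole theorem reduces to constructing a Hilbert space $\Hilk\supseteq\Hil$, positive \srep{}s $\tilde\pi_i\colon\cA_i\to\IB(\Hilk)$, and the inclusion isometry $V\colon\Hil\to\Hilk$ such that
\[
V^*\tilde\pi_{i_1}(x_1)\cdots\tilde\pi_{i_n}(x_n)V=\p_{i_1}(x_1)\cdots\p_{i_n}(x_n)
\]
for every alternating word with letters $x_j\in\ker\psi_{i_j}$.

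For the construction I would first dilate each $\p_i$ by Theorem~\ref{thm:stinespring}, obtaining a positive \srep $\pi_i\colon\cA_i\to\IB(\Hilk_i)$ and an isometry $V_i\colon\Hil\to\Hilk_i$ with $\p_i=V_i^*\pi_i(\cdot)V_i$; identifying $\Hil$ with $V_i\Hil$ I set $\Hilk_i^\circ=\Hilk_i\ominus\Hil$. The space $\Hilk$ is then the free product of the pointed spaces $(\Hilk_i,\Hil)$ in the sense of Voiculescu, with the one-dimensional vacuum replaced by the common subspace $\Hil$: it is the orthogonal sum of $\Hil$ and of the alternating tensors $\Hilk_{i_1}^\circ\otimes\cdots\otimes\Hilk_{i_n}^\circ$ carrying an $\Hil$-coefficient. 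On $\Hilk$ the amplified representation $\tilde\pi_i$ acts through the block decomposition of $\pi_i$ relative to $\Hilk_i=\Hil\oplus\Hilk_i^\circ$, creating, preserving, or annihilating a leading type-$i$ tensor factor, while on the coefficient it is governed by the reference state $\psi_i$; one must verify that $\tilde\pi_i$ is multiplicative and self-adjoint, i.e.\ a genuine positive \srep.

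The verification of the displayed identity is then a freeness computation. Applying the operators to $V\Hil$ from the right, each $\tilde\pi_{i_j}(x_j)$ either acts diagonally (keeping the current word length) or lengthens the word; to survive the final compression $V^*$ a term must return to the length-zero summand $\Hil$. Here the hypothesis $x_j\in\ker\psi_{i_j}$ is decisive: when a letter of type $i_j$ meets a word whose leading factor has a different type, the length-preserving contribution is proportional to $\psi_{i_j}(x_j)=0$ and so drops out, whence the only branch reaching $\Hil$ is the fully diagonal one, which produces exactly $\p_{i_1}(x_1)\cdots\p_{i_n}(x_n)$. Thus $\p=V^*\pi(\cdot)V$ with $\pi$ the positive \srep of $\cA$ determined by the $\tilde\pi_i$, so $\p$ is u.c.p., unitality coming from $\p(1)=V^*V=1$.

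The main obstacle is the construction of $\Hilk$ and the $\tilde\pi_i$ for a coefficient space $\Hil$ of dimension greater than one. In Voiculescu's original setting the vacuum is a single vector, so the identification $\Hilk\cong\Hilk_i\otimes\Hilk(i)$ is a plain unitary; for a multidimensional $\Hil$ this fails, and the $\Hil$-coefficient must be transported correctly through the creation and annihilation steps. Simultaneously encoding two distinct reference data on each $\Hilk_i$---the state $\psi_i$, which governs the pass-through and forces the cross terms to vanish, and the u.c.p.\ map $\p_i$, which governs the compression to $\Hil$---and checking that the resulting $\tilde\pi_i$ is a bona fide $*$-representation, is where the real work lies; once this is in place, the complete positivity of $\p$ is immediate from Theorem~\ref{thm:stinespring}.
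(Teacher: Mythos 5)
Your strategy is the right one for the full statement---it is essentially Boca's own proof, building the free product of the Stinespring dilations $(\Hilk_i,\Hil)$ amalgamated over the multidimensional ``vacuum'' $\Hil$---but as written there is a genuine gap, and it sits exactly where you yourself say the real work lies. You never actually define the operators $\tilde\pi_i(x)$ on the alternating tensor summands $\Hilk_{i_1}^\circ\otimes\cdots\otimes\Hilk_{i_n}^\circ$ carrying an $\Hil$-coefficient, and consequently neither the claim that $\tilde\pi_i$ is a multiplicative, self-adjoint, positive \srep nor the ``freeness computation'' in your third paragraph can be checked. The difficulty is not cosmetic: when $\dim\Hil>1$ the block decomposition of $\pi_i(x)$ relative to $\Hilk_i=\Hil\oplus\Hilk_i^\circ$ must be transported onto words whose leading factor has type $\neq i$, and the naive prescription (act on the $\Hil$-coefficient, or create a new leading factor) does not obviously give a homomorphism; making this precise requires re-expressing $\Hilk_i^\circ$ as a suitable module and identifying $\Hilk$ with $\Hilk_i$-tensored spaces in a way compatible with the $\Hil$-coefficients, which is the entire content of Boca's theorem. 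Likewise, your assertion that the length-preserving contribution of a letter $x_j$ meeting a word of different leading type is ``proportional to $\psi_{i_j}(x_j)=0$'' is a property of the (undefined) construction, not something you have established. So the proposal is a correct plan with the decisive step left open, not a proof.

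For comparison, the paper does not prove the full theorem either: it explicitly declares the proof too complicated to reproduce and establishes only the ``in particular'' clause, namely the existence of \emph{some} joint u.c.p.\ extension of the $\p_i$, by a much softer argument---dilate each $\p_i$ by Theorem~\ref{thm:stinespring} to a positive \srep $\sigma_i$ on $\Hil\oplus\Hil_i$, inflate so that each $\cA_j$ also admits a positive \srep on $\Hil_i$ for $j\neq i$, form $\pi_i=\bigoplus_j\sigma_{i,j}$ on $\hat{\Hil}=\Hil\oplus\bigoplus_j\Hil_j$, and combine these into a positive \srep $\pi$ of $\cA$ via the free product; the compression $P_\Hil\pi(\cdot)|_\Hil$ then restricts to $\p_i$ on each $\cA_i$ but is \emph{not} claimed to equal the free product map on longer alternating words. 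If you only need what the rest of the note uses, that weaker statement suffices and your machinery is unnecessary; if you want the full statement, you must supply the construction of the $\tilde\pi_i$ and the verification that they are \srep{}s.
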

\begin{proof}
The proof of this theorem is too complicated to reproduce it here, but we prove
existence of a joint u.c.p.\ extension, which is essentially the only part that will be used in this note.
By Theorem~\ref{thm:stinespring}, each u.c.p.\ map $\p_i$ extends to a positive
\srep $\sigma_i$ on $\Hil\oplus\Hil_i$. By inflating $\Hil_i$ if necessary,
one may assume that there is a positive \srep{} $\sigma_{j,i}$ of $\cA_j$ on $\Hil_i$ for
$j\neq i$. We rewrite $\sigma_i$ by $\sigma_{i,i}$ and consider
the \srep{}s $\pi_i=\bigoplus_j\sigma_{i,j}$ of $\cA_i$ on
$\hat{\Hil}=\Hil\oplus\bigoplus_j\Hil_j$.
They extend to a positive \srep $\pi$ of $\cA$ such that
$P_{\Hil}\pi(a)|_{\Hil}=\p_i(a)$ for each $i$ and $a\in\cA_i\subset\cA$.
\end{proof}

Recall that a semi-operator system is a unital $*$-subspace $\cS$ of a
\spc $\cA$. It is equipped with
the matricial positive cone $\IM_n(\cS)_+:=\IM_n(\cS)_\rh\cap\IM_n(\cA)_+$ for each $n\in\IN$.
Thus the notion of complete positivity carries over to maps between semi-operator systems.
We introduce a convenient tool about the one-to-one correspondence between linear maps
$\p\colon\cS\to\IM_n(\ik)$ and $\tilde{\p}\colon\IM_n(\cS)\to\ik$, given by
\[
\tilde{\p}([x_{i,j}])=\sum_{i,j}\p(x_{i,j})_{i,j}
\ \mbox{ and }\
\p(x)=[\tilde{\p}(e_{i,j}\otimes x)]_{i,j}.
\]

\begin{lem}\label{lem:corr}
For a semi-operator system $\cS\subset\cA$, one has the following.
\begin{enumerate}[$\bullet$]
\item
Under the above one-to-one correspondence,
$\p$ is c.p.\ if and only if $\tilde{\p}$ is self-adjoint and positive.
\item
Let $\psi\colon\cS\to\IB(\Hil)$ be a non-zero c.p.\ map.
Then, for the support projection $p$ of $\psi(1)$, one has
$\psi(x)=p\psi(x)p$ for every $x\in\cS$, and
there is a unique u.c.p.\ map $\psi'\colon\cS\to\IB(p\Hil)$
such that $\psi(x)=\psi(1)^{1/2}\psi'(x)\psi(1)^{1/2}$.
Moreover, for every $x\in\IM_n(\ik)\otimes\cS$
one has $(\id\otimes\psi)(x)\geq0$ if and only if $(\id\otimes\psi')(x)\geq0$.
\end{enumerate}
\end{lem}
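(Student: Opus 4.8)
The plan for the first bullet is to identify $\tilde\p$ with a vector functional of the amplification $\id_n\otimes\p$. Writing $(e_i)_{i=1}^n$ for the standard basis of $\ik^n$ and putting $\zeta=\sum_{i=1}^n e_i\otimes e_i\in\ik^n\otimes\ik^n$, a one-line computation gives, for every $z=[x_{i,j}]\in\IM_n(\cS)$,
\[
\tilde\p(z)=\ip{(\id_n\otimes\p)(z)\zeta,\zeta},
\]
and the self-adjointness of $\p$ and of $\tilde\p$ correspond to each other by a direct check. If $\p$ is c.p., then $\id_n\otimes\p$ is positive, so the displayed identity forces $\tilde\p(z)\geq0$ whenever $z\in\IM_n(\cS)_+$. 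For the converse I would prove that $\id_m\otimes\p$ is positive for \emph{every} $m$ by pulling back an arbitrary test vector to $\IM_n(\cS)_+$: given $w=[s_{k,l}]\in\IM_m(\cS)_+$ and $\eta=\sum_k e_k\otimes\eta_k\in\ik^m\otimes\ik^n$, let $\Theta\in\IM_{n,m}(\ik)$ have $k$-th column $\overline{\eta_k}$. Then
\[
z:=(\Theta\otimes1)\,w\,(\Theta\otimes1)^*=\sum_{k,l}|\overline{\eta_k}\rangle\langle\overline{\eta_l}|\otimes s_{k,l}
\]
lies in $\IM_n(\cS)_+$, because the explicit description of the matrix cones shows that $V(\cdot)V^*$ maps $\IM_m(\cA)_+$ into $\IM_n(\cA)_+$ for $V\in\IM_{n,m}(\cA)$, and one checks $\tilde\p(z)=\ip{(\id_m\otimes\p)(w)\eta,\eta}$. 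Positivity of $\tilde\p$ then gives $(\id_m\otimes\p)(w)\geq0$, so $\p$ is c.p.

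For the second bullet the central object is $D:=\psi(1)^{1/2}\geq0$, whose support projection is the projection $p$ of the statement. The only real obstacle is to manufacture a \emph{bounded} operator $\psi'(x)$ satisfying $\psi(x)=D\psi'(x)D$; the rest is bookkeeping via density of the range of $D$. To produce it, fix $x\in\cS$ and choose $R>0$ with $R^2 1-x^*x\in\cA_+$, which is possible since $\cS\subset\cA=\cA^{\mathrm{bdd}}$. The hermitian matrix $M_x=\left[\begin{smallmatrix}R1 & x\\ x^* & R1\end{smallmatrix}\right]$ belongs to $\IM_2(\cS)_+$, as one sees from the decomposition $M_x=v^*v+\diag(0,R^{-1}(R^2 1-x^*x))$ with $v=\left[\begin{smallmatrix}R^{1/2}1 & R^{-1/2}x\end{smallmatrix}\right]\in\IM_{1,2}(\cA)$. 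Applying the c.p.\ map $\id_2\otimes\psi$ and using $\psi(x^*)=\psi(x)^*$ yields $\left[\begin{smallmatrix}R\psi(1) & \psi(x)\\ \psi(x)^* & R\psi(1)\end{smallmatrix}\right]\geq0$, and the Cauchy--Schwarz inequality for this positive $2\times2$ operator matrix gives
\[
|\ip{\psi(x)\eta,\xi}|^2\leq R^2\ip{\psi(1)\xi,\xi}\,\ip{\psi(1)\eta,\eta}=R^2\|D\xi\|^2\|D\eta\|^2 .
\]
Hence $\ip{\psi(x)\eta,\xi}$ depends only on $D\eta$ and $D\xi$ and is bounded by $R$ there, so it defines a bounded operator $\psi'(x)\in\IB(p\Hil)$ with $\|\psi'(x)\|\leq R$ and $\ip{\psi'(x)D\eta,D\xi}=\ip{\psi(x)\eta,\xi}$; equivalently $\psi(x)=D\psi'(x)D$.

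Because $D$ has dense range in $p\Hil$, the relation $\psi(x)=D\psi'(x)D$ determines $\psi'(x)$ uniquely, whence $\psi'$ is linear; taking $x=1$ gives $D\psi'(1)D=D^2$, so $\psi'(1)=p=1_{p\Hil}$ and $\psi'$ is unital. Since $D=pD=Dp$, the factorization immediately gives $\psi(x)=pD\psi'(x)Dp=p\psi(x)p$, which is the first assertion of the bullet. Finally, the factorization amplifies: for every $y\in\IM_n(\ik)\otimes\cS$ one has
\[
(\id\otimes\psi)(y)=(1\otimes D)\,(\id\otimes\psi')(y)\,(1\otimes D),
\]
where $1\otimes D$ is injective with dense range on $(p\Hil)^n$. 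As congruence by an operator of dense range transfers positivity in both directions for bounded self-adjoint operators (the required self-adjointness being forced by density as soon as one side is assumed positive), it follows that $(\id\otimes\psi)(y)\geq0$ if and only if $(\id\otimes\psi')(y)\geq0$. Specializing to $y\in\IM_n(\cS)_+$ shows $\psi'$ is c.p., and the general case is precisely the stated equivalence. The one delicate step throughout is the boundedness of $\psi'(x)$, which is exactly what the $2\times2$ positivity and the Cauchy--Schwarz estimate above secure.
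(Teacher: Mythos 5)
Your proposal is correct, and both of its key mechanisms coincide with the paper's: the vector-state identity $\tilde{\p}(z)=\ip{(\id\otimes\p)(z)\zeta,\zeta}$ with $\zeta=\sum_i\delta_i\otimes\delta_i$ for the first bullet, and the positivity of the $2\times 2$ matrix $\left[\begin{smallmatrix} R\psi(1) & \psi(x)^* \\ \psi(x) & R\psi(1)\end{smallmatrix}\right]$ (obtained by pushing an explicit decomposition of $\left[\begin{smallmatrix} R1 & x^* \\ x & R1\end{smallmatrix}\right]$ in $\IM_2(\cA)_+$ through $\id\otimes\psi$) for the second. Where you diverge is in the two places the paper delegates to citation or omits. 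For the converse of the first bullet, the paper invokes the identity $\p(x)=(\id\otimes\tilde{\p})(\sum e_{i,j}\otimes e_{i,j}\otimes x)$ together with the black-box fact that every self-adjoint positive linear functional is c.p.; your compression $z=(\Theta\otimes1)w(\Theta\otimes1)^*$ followed by $\tilde\p(z)=\ip{(\id_m\otimes\p)(w)\eta,\eta}$ is in effect a direct proof of that fact, so it is self-contained but not shorter. For the construction of $\psi'$, the paper defines it as a point-ultraweak limit of the regularized compressions $(\psi(1)+\e1)^{-1/2}\psi(\,\cdot\,)(\psi(1)+\e1)^{-1/2}$ as $\e\searrow0$, whereas you build $\psi'(x)$ in one step from the Cauchy--Schwarz bound $|\ip{\psi(x)\eta,\xi}|\le R\|D\eta\|\,\|D\xi\|$ as a bounded sesquilinear form on the dense range of $D=\psi(1)^{1/2}$. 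Your route buys uniqueness for free (the paper's limit construction does not obviously yield it without the same density remark) and makes the ``moreover'' equivalence transparent via the amplified factorization $(\id\otimes\psi)(y)=(1\otimes D)(\id\otimes\psi')(y)(1\otimes D)$, both of which the paper's proof leaves implicit; the paper's route avoids dealing with forms and quotients explicitly. One small point worth making explicit in your write-up: in the real case, positivity of the quadratic form $\eta\mapsto\ip{(\id_m\otimes\p)(w)\eta,\eta}$ does not by itself give positivity of the operator, so you should note that $(\id_m\otimes\p)(w)$ is hermitian because $w$ is and $\p$ is self-adjoint (you do address the analogous point for $\psi'$ at the end, and the same one-line remark suffices here).
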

\begin{proof}
It is routine to see that $\p$ is self-adjoint if and only if $\tilde{\p}$ is.
Now, for $\zeta=\sum_i \delta_i\otimes\delta_i\in\ell_2^n\otimes\ell_2^n$,
one has $\tilde{\p}(x)=\ip{(\id\otimes\p)(x)\zeta,\zeta}$.
This proves $\tilde{\p}$ is positive
if $\id\otimes\p\colon \IM_n(\cS)\to\IM_n(\IM_n(\ik))$ is positive.
The converse follows from the identity
$\p(x)=(\id\otimes\tilde{\p})(\sum e_{i,j}\otimes e_{i,j}\otimes x)$
and the fact that every self-adjoint positive linear functional is c.p.

For the second assertion, we first observe that for
$s,t\in\IB(\Hil)$ one has
$\left[\begin{smallmatrix} s & t^* \\ t & 1\end{smallmatrix}\right]\geq0$
if and only if $t^*t\le s$ (see Lemma 3.1 in \cite{paulsen}).
Let $x \in \cS\subset\cA$ be an element such that $xx^*\le R^21$.
Then,
\[
\begin{bmatrix} R & x^* \\ x & R\end{bmatrix}
=\frac{1}{R}\begin{bmatrix} R \\ x \end{bmatrix}
 \begin{bmatrix} R & x^* \end{bmatrix}
 +\frac{1}{R}\begin{bmatrix} 0 \\ 1 \end{bmatrix}
  (R^2-xx^*)
 \begin{bmatrix} 0 & 1 \end{bmatrix}\in(\IM_2(\ik)\otimes\cA)_+.
\]
It follows that $\left[\begin{smallmatrix} R\psi(1) & \psi(x)^* \\ \psi(x) & R\psi(1)\end{smallmatrix}\right]\geq0$.
This implies that $\psi(x)p=\psi(x)$ for the support projection $p$ of $\psi(1)$ and
\[
\|(\psi(1)+\e1)^{-1/2}\psi(x)(\psi(1)+\e1)^{-1/2}\|\le R
\]
for every $\e>0$.
We define $\psi'\colon\cS\to\IB(p\Hil)$ to be a point-ultraweak limit point of
the maps
$(\psi(1)+\e1)^{-1/2}\psi(\,\cdot\,)(\psi(1)+\e1)^{-1/2}$ as $\e\searrow0$.
(This actually converges in norm.)
This completes the proof.
\end{proof}

\begin{thm}[Arveson's Extension Theorem]\label{thm:arveson}
Let $\cA$ be a \spc and
$\cS\subset\cA$ be a semi-operator system.
Then, every c.p.\ map $\p\colon\cS\to\IB(\Hil)$ extends to
a c.p.\ map $\bar{\p}\colon\cA\to\IB(\Hil)$.
\end{thm}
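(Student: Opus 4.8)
The plan is to reduce Arveson's Extension Theorem to the case of a finite-dimensional target and then to pass to a general $\Hil$ by a compression-and-limit argument. The whole point of the finite-dimensional case is that complete positivity of a map into $\IM_n(\ik)$ is the same data as positivity of a single linear functional, so that the scalar-valued Krein Extension Theorem (Corollary~\ref{cor:hb}) can be brought to bear via the dictionary of Lemma~\ref{lem:corr}.

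Concretely, suppose first that $\p\colon\cS\to\IM_n(\ik)$ is c.p.; we may assume $\p\neq0$. By the first part of Lemma~\ref{lem:corr} the associated functional $\tilde{\p}\colon\IM_n(\cS)\to\ik$ is self-adjoint and positive. Now $\IM_n(\cA)$ is itself a \spc by Lemma~\ref{lem:matspc}, and $\IM_n(\cS)$ is a semi-operator system inside it, being a $*$-subspace that contains the unit $\id\otimes1$ (because $1\in\cS$), with matricial cone $\IM_n(\cS)_\rh\cap\IM_n(\cA)_+$. Since $\p\neq0$, the correspondence forces $\tilde{\p}\neq0$, and a nonzero positive self-adjoint functional cannot vanish at the unit: if $\tilde{\p}(\id\otimes1)=0$ then $0\le\tilde{\p}(c)\le R\,\tilde{\p}(\id\otimes1)=0$ for every $c\le R(\id\otimes1)$ in $\IM_n(\cS)_+$, and as every hermitian element of a \spc is dominated by a multiple of $1$ and $\IM_n(\cS)_+$ spans $\IM_n(\cS)_\rh$, this would give $\tilde{\p}=0$. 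Rescaling so that $\tilde{\p}(\id\otimes1)=1$, I apply Krein's Extension Theorem to extend $\tilde{\p}$ to a state on $\IM_n(\cA)$; undoing the normalization yields a positive self-adjoint functional on $\IM_n(\cA)$ that restricts to $\tilde{\p}$. Applying the correspondence of Lemma~\ref{lem:corr} in reverse, now with $\cS$ replaced by $\cA$, produces a c.p.\ map $\bar{\p}\colon\cA\to\IM_n(\ik)$ with $\bar{\p}|_\cS=\p$.

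For a general $\Hil$, I would fix for each finite-dimensional subspace $E\subset\Hil$ the orthogonal projection $P_E$ and form the compression $\p_E=P_E\,\p(\,\cdot\,)\,P_E\colon\cS\to\IB(E)\cong\IM_{\dim E}(\ik)$, which is again c.p. By the finite-dimensional case it extends to a c.p.\ map $\bar{\p}_E\colon\cA\to\IB(E)\subset\IB(\Hil)$. These extensions are uniformly bounded: one has $\bar{\p}_E(1)=\p_E(1)=P_E\p(1)P_E$, so $\|\bar{\p}_E(1)\|\le\|\p(1)\|$, and the $2\times2$ positivity estimate from the proof of Lemma~\ref{lem:corr}, applied to $\left[\begin{smallmatrix} R1 & a^* \\ a & R1\end{smallmatrix}\right]\in\IM_2(\cA)_+$ for $a$ with $a^*a,aa^*\le R^21$, gives $\left[\begin{smallmatrix} R\bar{\p}_E(1) & \bar{\p}_E(a)^* \\ \bar{\p}_E(a) & R\bar{\p}_E(1)\end{smallmatrix}\right]\ge0$ and hence $\|\bar{\p}_E(a)\|\le R\|\p(1)\|$ independently of $E$. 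Directing the subspaces $E$ by inclusion, Banach--Alaoglu together with Tychonoff yields a point-ultraweakly convergent subnet, and its limit $\bar{\p}\colon\cA\to\IB(\Hil)$ is c.p.\ as a point-ultraweak limit of c.p.\ maps. It extends $\p$ because for $x\in\cS$ we have $\bar{\p}_E(x)=P_E\p(x)P_E\to\p(x)$ strongly as $E\uparrow\Hil$.

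The main obstacle is this last, infinite-dimensional, step: one must secure the uniform norm bound so that the relevant family of c.p.\ maps lies in a point-ultraweakly compact set, and then verify that both complete positivity and the prescribed values on $\cS$ survive the passage to the limit. By contrast, the finite-dimensional heart of the argument is essentially bookkeeping once Lemma~\ref{lem:corr} and Krein's theorem are in hand; the only point to watch there is that the two invocations of Lemma~\ref{lem:corr} use the \emph{same} matricial cone $\IM_n(\cS)_\rh\cap\IM_n(\cA)_+$, which is exactly the semi-operator-system structure carried by $\IM_n(\cS)$.
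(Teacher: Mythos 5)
Your proof is correct and follows essentially the same route as the paper: reduce to the finite-dimensional case via the correspondence of Lemma~\ref{lem:corr} and Krein's Extension Theorem (Corollary~\ref{cor:hb}), then handle general $\Hil$ by compressing to finite-dimensional subspaces, extracting a point-ultraweak limit from the uniformly bounded net of extensions. The extra details you supply (normalizing $\tilde{\p}$ to a state and the $2\times2$ positivity estimate for the uniform bound) are points the paper leaves implicit, and they are handled correctly.
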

\begin{proof}
We first deal with the case $\Hil=\ell_2^n$. In this case, by Lemma~\ref{lem:corr},
the c.p.\ map $\p\colon\cS\to\IM_n(\ik)$ corresponds to
a positive self-adjoint linear functional $\tilde{\p}$ on $\IM_n(\cS)$.
By Corollary~\ref{cor:hb}, it extends to a positive self-adjoint linear functional
on $\IM_n(\cA)$. Using the correspondence again, one obtains a c.p.\ extension
$\bar{\p}$ on $\cA$.

Now, let $\Hil$ be arbitrary. We denote by $\{\Hil_i\}_{i\in I}$ the
directed set of the finite-dimensional subspaces of $\Hil$, and by $\Theta_i\colon\IB(\Hil)\to\IB(\Hil_i)$
the corresponding compression.
 From the above, each $\Theta_i\circ\p$ has a c.p.\ extension $\bar{\p}_i\colon\cA\to\IB(\Hil_i)$.
Since $\|\bar{\p}_i\|\le\|\p(1)\|$ for all $i$, one may find a limit point $\bar{\p}$ of $\bar{\p}_i$
in the point-ultraweak topology.
It is not too hard to verify that $\bar{\p}$ is a c.p.\ extension of $\p$.
\end{proof}

\begin{thm}[Kadison--Choi Inequality and Choi's Multiplicative Domain]\label{thm:choi}
Let $\cA$ be a \spc and $\p\colon\cA\to\IB(\Hil)$ be a u.c.p.\ map.
Then, for every $x,y\in\cA$, one has
$\p(x^*x)\geq\p(x)^*\p(x)$ and
\[
\|\p(y^*x)-\p(y)^*\p(x)\|\le\|\p(y^*y)-\p(y)^*\p(y)\|^{1/2}\|\p(x^*x)-\p(x)^*\p(x)\|^{1/2}.
\]
In particular,
\[
\md(\p)=\{ x\in\cA : \p(x^*x)=\p(x)^*\p(x)\mbox{ and }\p(xx^*)=\p(x)\p(x)^*\}
\]
is a unital $*$-subalgebra of $\cA$ and $\p|_{\md(\p)}$ is a positive \shom.
\end{thm}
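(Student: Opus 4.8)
The plan is to push everything through the Stinespring dilation of Theorem~\ref{thm:stinespring}. Since $\p$ is u.c.p., there are a Hilbert space $\hat{\Hil}$, a positive \srep $\pi\colon\cA\to\IB(\hat{\Hil})$, and an isometry $V\in\IB(\Hil,\hat{\Hil})$ with $\p(a)=V^*\pi(a)V$ for all $a$. Write $P=VV^*$ for the projection onto the range of $V$, so that $V^*V=1$ and $1-P$ is again a projection. Because $\pi$ is a \shom, $\pi(x)^*\pi(x)=\pi(x^*x)$, and the Kadison--Schwarz inequality drops out immediately from inserting $P$:
\[
\p(x^*x)-\p(x)^*\p(x)=V^*\pi(x)^*(1-P)\pi(x)V=\bigl((1-P)\pi(x)V\bigr)^*\bigl((1-P)\pi(x)V\bigr)\geq0,
\]
using $(1-P)^*(1-P)=1-P$.

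For the Cauchy--Schwarz-type bound I would set $a=(1-P)\pi(x)V$ and $b=(1-P)\pi(y)V$. The same computation gives $\p(y^*x)-\p(y)^*\p(x)=b^*a$, while $a^*a=\p(x^*x)-\p(x)^*\p(x)$ and $b^*b=\p(y^*y)-\p(y)^*\p(y)$. Hence
\[
\|\p(y^*x)-\p(y)^*\p(x)\|=\|b^*a\|\le\|b\|\,\|a\|=\|b^*b\|^{1/2}\|a^*a\|^{1/2},
\]
which is exactly the stated inequality.

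Next I turn to the multiplicative domain. The first defining equation $\p(x^*x)=\p(x)^*\p(x)$ says precisely $a^*a=0$, i.e.\ $(1-P)\pi(x)V=0$, equivalently $\pi(x)V=P\pi(x)V=V\p(x)$; the second, applied to $x^*$, says $\pi(x^*)V=V\p(x^*)$, whose adjoint reads $V^*\pi(x)=\p(x)V^*$ (using $\pi(x^*)=\pi(x)^*$ and $\p(x^*)=\p(x)^*$, the latter by self-adjointness of $\p$). Thus $x\in\md(\p)$ exactly when both operator identities hold. The first yields $\p(zx)=V^*\pi(z)\pi(x)V=V^*\pi(z)V\p(x)=\p(z)\p(x)$ for every $z\in\cA$, and the second yields $\p(xz)=\p(x)\p(z)$ for every $z$. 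Closure under multiplication is then bookkeeping: for $x,y\in\md(\p)$ and any $z$ one gets $\p(z(xy))=\p(zx)\p(y)=\p(z)\p(x)\p(y)=\p(z)\p(xy)$ and symmetrically $\p((xy)z)=\p(xy)\p(z)$, so both identities persist for $xy$ and $xy\in\md(\p)$. Linearity and closure under $*$ are immediate from the symmetric pair of defining equations, $1\in\md(\p)$ is trivial, and on $\md(\p)$ the map $\p$ is multiplicative and self-adjoint, hence a \shom; it is positive because $\p$ itself is.

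The one genuinely substantive step is the translation in the third paragraph of the scalar equality $\p(x^*x)=\p(x)^*\p(x)$ into the operator identity $\pi(x)V=V\p(x)$ — that is, that $\pi(x)$ leaves the range of $V$ invariant and acts there as $\p(x)$. Once that dictionary is in place, the one-sided multiplicativity and the subalgebra property are essentially automatic. I expect no archimedean or infinitesimal-ideal subtleties to intrude, since Stinespring already packages the \spc structure into an honest representation on $\hat{\Hil}$, after which only elementary operator manipulations remain.
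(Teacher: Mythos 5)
Your proof is correct and follows essentially the same route as the paper: both derive the Kadison--Schwarz inequality and the Cauchy--Schwarz-type bound by inserting $1-VV^*$ into the Stinespring dilation $\p(\,\cdot\,)=V^*\pi(\,\cdot\,)V$. You merely spell out the multiplicative-domain bookkeeping (via the identities $\pi(x)V=V\p(x)$ and $V^*\pi(x)=\p(x)V^*$) that the paper dismisses as ``the rest is trivial,'' and that part is also correct.
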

\begin{proof}
This follows from the Stinespring Dilation Theorem.
Since $\p$ can be expressed as $\p(x)=V^*\pi(x)V$, one has
\[
\p(x^*x)-\p(x)^*\p(x) = V^*\pi(x)^*(1-VV^*)\pi(x)V \geq0
\]
and
\[
\p(y^*x)-\p(y)^*\p(x)=((1-VV^*)^{1/2}\pi(y)V)^*((1-VV^*)^{1/2}\pi(x)V).
\]
The rest is trivial.
\end{proof}
\begin{thm}[Pisier's Linearization Trick]
Let $\cS\subset\cA$ be a semi-operator system,
$B$ be a \ca, and $\p\colon \cS\to B$ be a u.c.p.\ map.
Assume that
\[
\{ x\in\cS : 1-x^*x\in\arch(\cA_+)\mbox{ and }\p(x)\in B_\ru\}
\]
generates $\cA$ as a $*$-algebra. Then, $\p$ extends to
a positive \shom from $\cA$ into $B$.
\end{thm}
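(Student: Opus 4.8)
The plan is to promote $\p$ to a u.c.p.\ map on all of $\cA$ and then show it is automatically multiplicative, via Choi's multiplicative domain. Fix a faithful \srep $B\subset\IB(\Hil)$, so that $\p$ may be regarded as a c.p.\ map $\cS\to\IB(\Hil)$. By Arveson's Extension Theorem (Theorem~\ref{thm:arveson}), $\p$ extends to a c.p.\ map $\bar{\p}\colon\cA\to\IB(\Hil)$; since $1\in\cS$ and $\bar\p|_\cS=\p$, we have $\bar\p(1)=\p(1)=1$, so $\bar\p$ is u.c.p. The goal is then to prove that the hypothesised generating set
\[
G=\{ x\in\cS : 1-x^*x\in\arch(\cA_+)\mbox{ and }\p(x)\in B_\ru\}
\]
is contained in the multiplicative domain $\md(\bar\p)$ of Theorem~\ref{thm:choi}.

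First I would record two positivity facts. Because $\bar\p$ is c.p.\ hence positive, and $a+\e1\in\cA_+$ for all $\e>0$ whenever $a\in\arch(\cA_+)$, one gets $\bar\p(a)\geq0$ for every $a\in\arch(\cA_+)$; in particular $\bar\p$ sends archimedean-positive elements to positive operators. Next, if $x\in G$ then $1-x^*x\in\arch(\cA_+)$ also forces $1-xx^*\in\arch(\cA_+)$: for each $R=1+\e$ the bounded-element computation from the discussion preceding the Definition,
\[
0\le R^{-1}(R1-xx^*)^2=R1-2xx^*+R^{-1}x(x^*x)x^*\le R1-xx^*,
\]
gives $xx^*\le(1+\e)1$, and letting $\e\searrow0$ yields $xx^*\le1$.

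Now fix $x\in G$ and write $u=\p(x)=\bar\p(x)\in B_\ru$. On one hand $\bar\p(x^*x)\le\bar\p(1)=1$ by the first positivity fact applied to $1-x^*x$; on the other hand the Kadison--Choi inequality (Theorem~\ref{thm:choi}) gives $\bar\p(x^*x)\geq\bar\p(x)^*\bar\p(x)=u^*u=1$. These two bounds squeeze to $\bar\p(x^*x)=1=\bar\p(x)^*\bar\p(x)$. The identical argument applied to $xx^*\le1$ gives $\bar\p(xx^*)=1=uu^*=\bar\p(x)\bar\p(x)^*$. Hence $x\in\md(\bar\p)$.

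Finally, Theorem~\ref{thm:choi} tells us $\md(\bar\p)$ is a unital $*$-subalgebra of $\cA$ on which $\bar\p$ is a positive \shom. Since $\md(\bar\p)\supset G$ and $G$ generates $\cA$ as a $*$-algebra, we conclude $\md(\bar\p)=\cA$, so $\bar\p$ is a positive \shom on all of $\cA$ extending $\p$. As $\bar\p$ is then multiplicative and $*$-preserving, its range is the $*$-subalgebra of $\IB(\Hil)$ generated by $\{\p(x):x\in G\}\subset B$, which lies in $B$; thus $\bar\p$ is the desired positive \shom from $\cA$ into $B$. The only substantive step is the squeeze in the third paragraph---everything else is just assembling Arveson's theorem with Choi's multiplicative domain---and its one subtlety is propagating the contraction hypothesis from $x^*x$ to $xx^*$ so that both Kadison--Choi estimates can be forced to be equalities.
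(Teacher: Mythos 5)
Your proof is correct and follows essentially the same route as the paper: Arveson extension to a u.c.p.\ map $\bar\p$ on $\cA$, the squeeze $1=\bar\p(x)^*\bar\p(x)\le\bar\p(x^*x)\le1$ via the Kadison--Choi inequality, and Choi's multiplicative domain to conclude. The only difference is that you explicitly justify the step the paper dismisses with ``likewise for $x^*$'' by deriving $1-xx^*\in\arch(\cA_+)$ from $1-x^*x\in\arch(\cA_+)$ using the bounded-element computation, which is a worthwhile detail to record.
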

\begin{proof}
Let $B\subset\IB(\Hil)$. By Theorem~\ref{thm:arveson}, $\p$ extends to a u.c.p.\
map $\bar{\p}\colon\cA\to\IB(\Hil)$.
If $x$ is in the set described in the theorem, then by Theorem~\ref{thm:choi} one has
\[
1=\bar{\p}(x)^*\bar{\p}(x)\le\bar{\p}(x^*x)\le 1,
\]
which implies $\bar{\p}(x)^*\bar{\p}(x)=\bar{\p}(x^*x)$.
Likewise for $x^*$, and one has $x\in\md(\bar{\p})$.
Thus, $\bar{\p}$ is multiplicative on $\cA$ and it maps into $B$.
\end{proof}

\begin{cor}\label{cor:pisier}
Let $\cS\subset\cA$ be a semi-operator system which contains
enough unitary elements of $\cA$ to generate $\cA$ as a $*$-algebra.
Then, every \shom $\pi$ from $\cA$ into a
\ca $B$ is positive provided that $\pi|_\cS$ is c.p.
Moreover,
\[
\arch(\cA_+) = \arch( \{\sum_{i,j} x_i^*a_{i,j}x_j : n\in\IN,\ [a_{i,j}]_{i,j}\in\IM_n(\cS)_+,\ x_1,\ldots,x_n\in\cA\}).
\]
\end{cor}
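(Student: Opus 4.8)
The plan is to obtain the first assertion directly from Pisier's Linearization Trick and then to deduce the displayed identity by a separation argument whose output is fed back into the first assertion. For the first assertion, I would apply the preceding theorem to the u.c.p.\ map $\varphi=\pi|_\cS$. If $u\in\cS$ is one of the unitaries of $\cA$ from the generating family, then $1-u^*u=0\in\arch(\cA_+)$ and $\varphi(u)=\pi(u)\in B_\ru$ since $\pi$ is a unital \shom; hence the set appearing in Pisier's trick contains a generating family, and $\varphi$ extends to a positive \shom $\bar\varphi\colon\cA\to B$. As $\bar\varphi$ and $\pi$ are \shom{}s agreeing on the generators, $\bar\varphi=\pi$, so $\pi$ is positive.

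For the displayed identity, write $\cQ$ for the cone on the right-hand side; it is a $*$-positive cone containing $1$. The inclusion $\arch(\cQ)\subseteq\arch(\cA_+)$ is the easy half: using $\IM_n(\cS)_+\subseteq\IM_n(\cA)_+=\{[\sum_k y_{k,i}^*b_ky_{k,j}]_{i,j} : b_k\in\cA_+\}$, a generator $\sum_{i,j}x_i^*a_{i,j}x_j$ of $\cQ$ rewrites as $\sum_k z_k^*b_kz_k$ with $z_k=\sum_i y_{k,i}x_i$, so $\cQ\subseteq\cA_+$ and the claim follows by monotonicity of $\arch$.

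The content is the reverse inclusion, which I would prove contrapositively: given $v\in\cA_\rh\setminus\arch(\cQ)$, I construct a positive \srep $\pi$ with $\ip{\pi(v)\hat1,\hat1}<0$, so that $v\notin\arch(\cA_+)$ by Theorem~\ref{thm:fundamental}. First, $1$ is an algebraic interior point of $\cQ$: every word $w$ in the generating unitaries is itself unitary, and $(1\pm w)(1\pm w)^*=2\cdot1\pm(w+w^*)$ (and, for $\ik=\IC$, $(1\pm\ii w)(1\pm\ii w)^*=2\cdot1\pm\ii(w-w^*)$) lie in $\cQ$ because $zz^*=(z^*)^*\cdot1\cdot z^*\in\cQ$; since such elements span $\cA_\rh$, a whole neighbourhood of $1$ lies in $\cQ$. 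Thus $\cQ$ is algebraically solid, and as $v+\e_0 1\notin\cQ$ for some $\e_0>0$, Theorem~\ref{thm:hbek} produces a self-adjoint $\psi$ with $\psi(1)=1$, $\psi\geq0$ on $\cQ$, and $\psi(v)\leq-\e_0<0$.

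Finally, since $x^*x=x^*\cdot1\cdot x\in\cQ$, the form $\ip{\hat y,\hat x}=\psi(x^*y)$ is positive semi-definite and yields a Hilbert space $L^2(\cA,\psi)$; for a generating unitary $u$ one has $\psi(x^*u^*ux)=\psi(x^*x)$, so left multiplication by $u$ descends to a unitary, and assembling these gives a \srep $\pi$ with $\pi(b)\hat x=\widehat{bx}$. Positivity of $\psi$ on $\cQ$ reads, on the dense set of vectors $\widehat{x_j}$, as $[\pi(a_{i,j})]_{i,j}\geq0$ for every $[a_{i,j}]\in\IM_n(\cS)_+$, i.e.\ $\pi|_\cS$ is c.p.; by the first assertion $\pi$ is then positive, while $\ip{\pi(v)\hat1,\hat1}=\psi(v)<0$. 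The main obstacle is conceptual: the separating functional $\psi$ is positive only on $\cQ$ and not on all of $\cA_+$, so it is not a state and its GNS representation is not positive for free. What rescues the argument is that the generating unitaries make left multiplication bounded---indeed unitary---so that a genuine \srep does exist, and the first assertion converts the merely $\cS$-completely-positive datum into honest positivity.
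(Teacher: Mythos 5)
Your proof is correct and takes essentially the same route as the paper's (very terse) proof: the displayed set is a $*$-positive cone on $\cA$ satisfying the Combes axiom, and by the first assertion its positive representations coincide with those of $(\cA,\cA_+)$, so the two archimedean closures agree. Your separation-plus-GNS argument is just the proof of Theorem~\ref{thm:fundamental}, applied to the new cone and written out in full, so you have filled in exactly the details the paper leaves implicit (including the correct verification that $1$ is an algebraic interior point via $2\cdot1\pm(w+w^*)=(1\pm w)(1\pm w)^*$, and the bootstrap through the first assertion to upgrade $\cS$-complete positivity of the GNS representation to genuine positivity).
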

\begin{proof}
The first assertion is immediate.
For the second, observe that the right hand side defines a
$*$-positive cone on the $*$-algebra $\cA$, which gives rise to the
same universal \ca as that of the original $\cA$.
\end{proof}
\subsection{Kirchberg's Theorem on $\mathrm{C}^*\IF_2\otimes\IB(\ell_2)$}
We will prove the following celebrated theorem of Kirchberg.
The proof takes a similar line as \cite{pisier,fp}.
\begin{thm}[\cite{kirchberg}]\label{thm:kirchberg}
Let $A_i$ be \ca{}s and $A=A_1*A_2$ be their unital full free product \ca.
If $A_i\otimes_{\max}\IB(\ell_2)=A_i\otimes_{\min}\IB(\ell_2)$ for each $i$, then
$A\otimes_{\max}\IB(\ell_2)=A\otimes_{\min}\IB(\ell_2)$.
In particular, one has
\[
\mathrm{C}^*\IF_d\otimes_{\max}\IB(\Hil)=\mathrm{C}^*\IF_d\otimes_{\min}\IB(\Hil)
\]
for every $d$ and every Hilbert space $\Hil$.
\end{thm}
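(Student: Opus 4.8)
The statement to prove is Kirchberg's theorem: if $A_i \otimes_{\max}\IB(\ell_2)=A_i\otimes_{\min}\IB(\ell_2)$ for each $i$, then the same holds for the full free product $A=A_1*A_2$. Let me think about how to attack this using the machinery built up in the excerpt.

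The key reformulation is that $A\otimes_{\max}\IB(\ell_2)=A\otimes_{\min}\IB(\ell_2)$ is equivalent to saying that the identity map on the algebraic tensor product is continuous from the $\min$-norm to the $\max$-norm, i.e., that every element positive in the $\min$-cone is already in the (archimedean closure of the) $\max$-cone. By Example~\ref{exa:freeprod} and Example~\ref{exa:tensor} together with Lemma~\ref{lem:matspc}, it suffices to work at the level of \spc{}s: I want to show that on the algebraic free product, positivity for the minimal tensor product already implies positivity for the maximal tensor product.

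**Strategy.** The plan is to use the description of the minimal tensor cone from Theorem~\ref{thm:tensorize} in terms of $\min$-$\min$ complete positivity of maps $\p\otimes\psi$ with $\p\in S_m(\cA)$, $\psi\in S_n(\IB(\ell_2))$, and to reconstruct a $\max$-positivity certificate from this data. The crucial structural input is Theorem~\ref{thm:boca} (the free-product theorem for u.c.p.\ maps): given u.c.p.\ maps $\p_i\colon A_i\to\IB(\Hil)$ whose ranges are suitably compatible, they assemble into a single u.c.p.\ map $\p\colon A_1*A_2\to\IB(\Hil)$. First I would fix an element $z\in A\otimes\IB(\ell_2)$ that is $\min$-positive and aim to show $z\in\arch((A\otimes^{\max}\IB(\ell_2))_+)$; by Corollary~\ref{cor:hb} (Eidelheit--Kakutani separation) it is enough to show every state $\omega$ on $A\otimes^{\max}\IB(\ell_2)$ satisfies $\omega(z)\ge 0$. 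Such a state, via the GNS construction, yields a representation and hence a u.c.p.\ map on the $\max$ tensor product.

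**The reduction on each factor.** The heart is to exploit the hypothesis $A_i\otimes_{\max}\IB(\ell_2)=A_i\otimes_{\min}\IB(\ell_2)$ on each factor. Concretely, any u.c.p.\ map defined on $A_i$ with values in a commutant-compatible setting factors in a way that is automatically $\min$-compatible on the $i$-th factor. I would convert the state $\omega$ on the $\max$ product into a pair of u.c.p.\ maps $\p_i\colon A_i\to\IB(\Hil)$ together with a representation of $\IB(\ell_2)$ on the same space whose image commutes with each $\p_i(A_i)$; the factorwise hypothesis guarantees these data admit a $\min$-type (i.e.\ commuting Stinespring) dilation. Theorem~\ref{thm:boca} then glues the $\p_i$ into a u.c.p.\ map $\p\colon A\to\IB(\hat{\Hil})$, and by Theorem~\ref{thm:choi} (Choi's multiplicative domain) the unitaries of $A$ land in $\md(\p)$, so $\p$ becomes a genuine $*$-representation whose image commutes with the copy of $\IB(\ell_2)$. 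This commuting pair of representations produces a $\min$-bounded estimate on $\omega(z)$, giving $\omega(z)\ge 0$.

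**Main obstacle.** The delicate point I expect to be hardest is arranging the commutation: after dilating via Stinespring (Theorem~\ref{thm:stinespring}), the representation of $A$ and the representation of $\IB(\ell_2)$ must act on a \emph{common} Hilbert space with mutually commuting ranges, and one must verify that the factorwise $\max=\min$ hypothesis is exactly what lets the dilations of the two factors $A_1,A_2$ be performed compatibly with a single fixed copy of $\IB(\ell_2)$. Managing the inflation of auxiliary spaces in Theorem~\ref{thm:boca} while keeping the $\IB(\ell_2)$-action intact, and checking that the resulting commuting representation genuinely computes the $\min$-norm (rather than merely dominating it), is where the real work lies; the surrounding separation and GNS arguments are routine by comparison. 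The specialization to $\mathrm{C}^*\IF_d$ is then immediate, since $\mathrm{C}^*\IF_d=\mathrm{C}^*\IF_1*\cdots*\mathrm{C}^*\IF_1$ is a free product of copies of $\mathrm{C}^*\IZ=C(\IT)$, each of which is abelian and hence nuclear, so satisfies the factorwise hypothesis trivially; passing from $\IB(\ell_2)$ to an arbitrary $\IB(\Hil)$ is harmless.
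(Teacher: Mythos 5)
Your overall frame (reduce, via Corollary~\ref{cor:hb}, to showing that every state $\omega$ on $A\otimes^{\max}\IB(\ell_2)$ is nonnegative on a given $\min$-positive element $z$) is legitimate, but the step that is supposed to do the work is circular. The GNS construction turns $\omega$ into a commuting pair $(\pi,\sigma)$ of representations of $A$ and $\IB(\ell_2)$; restricting $\pi$ to each $A_i$ and invoking the hypothesis shows that each pair $(\pi|_{A_i},\sigma)$ is $\min$-continuous on $A_i\otimes\IB(\ell_2)$. But re-gluing the restrictions $\pi|_{A_i}$ by Theorem~\ref{thm:boca} simply returns $\pi$ (the free product of the restrictions of a \shom is that \shom), and nothing in your argument forces the pair $(\pi,\sigma)$ to be $\min$-continuous on all of $A\otimes\IB(\ell_2)$ --- that is exactly the statement being proved. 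The ``$\min$-bounded estimate on $\omega(z)$'' at the end is asserted, not derived: a general element of $A\otimes\IB(\ell_2)$ involves words mixing $A_1$ and $A_2$ multiplicatively, and factorwise $\min$-continuity gives no control over such words.

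The two ingredients you are missing are precisely what breaks this circularity in the paper. First, Pisier's linearization trick (Corollary~\ref{cor:pisier}): since $\cS=A_1+A_2$ contains enough unitaries to generate $A$, it suffices to prove that the identity map $A\otimes^{\min}\IB(\ell_2)\to A\otimes_{\max}\IB(\ell_2)$ is c.p.\ (after absorbing $\IM_n$ into $\IB(\ell_2)$, merely positive) on the semi-operator system $\cS\otimes\IB(\ell_2)$; this confines attention to elements that are \emph{linear} in the free-product variables. Second, Lemma~\ref{lem:kavruk}: a $\min$-positive element of $\cS\otimes\IB(\ell_2)$ decomposes as $a_1+a_2$ with $a_i\in(A_i\otimes^{\min}\IB(\ell_2))_+$, after which the factorwise hypothesis applies to each summand separately and trivially yields $\max$-positivity of the sum. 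Boca's theorem and the separation argument you invoke do appear, but inside the proof of that decomposition lemma, applied to a functional defined only on $\cS\otimes\IB(\ell_2^n)$ whose restrictions to the genuine \ca{}s $A_i\otimes\IB(\ell_2^n)\cong\IM_n(A_i)$ are automatically c.p.\ --- not to a state on the full $\max$ tensor product. Without the reduction to $\cS$ there is no mechanism for splitting positivity across the free factors, and the plan as written cannot be completed.
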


The proof requires a description of the operator system structure of
$A_1+A_2\subset A$.
\begin{lem}[\cite{kavruk}]\label{lem:kavruk}
Let $\cA=\cA_1*\cA_2$ be a unital full free product and
$\cS=\cA_1+\cA_2\subset \cA$ be the semi-operator system.
Then, $a\in \cS\otimes\IB(\ell_2)$ belongs to $(\cA\otimes^{\min}\IB(\ell_2))_+$
if and only if there are $a_i\in (\cA_i\otimes^{\min}\IB(\ell_2))_+$ such that
$a=a_1+a_2$.
\end{lem}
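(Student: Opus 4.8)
The easy direction is immediate from the inclusion identity for minimal tensor products in Example~\ref{exa:tensor}: it gives $(\cA_i\otimes^{\min}\IB(\ell_2))_+=(\cA_i\otimes\IB(\ell_2))_\rh\cap(\cA\otimes^{\min}\IB(\ell_2))_+$, so $a_1,a_2\geq0$ yields $a=a_1+a_2\geq0$. For the converse, write
\[
C=(\cA_1\otimes^{\min}\IB(\ell_2))_++(\cA_2\otimes^{\min}\IB(\ell_2))_+\subseteq(\cS\otimes\IB(\ell_2))_\rh,
\]
so that the task is to show $(\cA\otimes^{\min}\IB(\ell_2))_+\cap(\cS\otimes\IB(\ell_2))\subseteq C$. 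My plan is to first settle the analogous statement with $\IB(\ell_2)$ replaced by $\IM_n(\ik)$—where positivity is detected by maps into matrices and Boca's theorem applies directly—and then to transfer to $\IB(\ell_2)$ by compression and a weak-$*$ compactness argument.

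For the matrix case, fix $n$ and put $C_n=\IM_n(\cA_1)_+'+\IM_n(\cA_2)_+'$, recalling $(\cA_i\otimes^{\min}\IM_n)_+=\IM_n(\cA_i)_+'$ (Lemma~\ref{lem:matspc}). The unit $1\otimes1$ is an algebraic interior point of $C_n$, so Theorem~\ref{thm:hbek} applies: if some min-positive $b\in\IM_n(\cS)$ failed to lie in $\arch(C_n)$, there would be a self-adjoint functional $\Phi$ on $\IM_n(\cS)$ with $\Phi\geq0$ on $C_n$ and $\Phi(b)<0$. Via the correspondence of Lemma~\ref{lem:corr}, $\Phi$ is a map $\p\colon\cS\to\IM_n$ whose restrictions $\p|_{\cA_1},\p|_{\cA_2}$ are completely positive and agree at $1$; after the support-projection reduction of Lemma~\ref{lem:corr} I may take them u.c.p., and Boca's Theorem~\ref{thm:boca} furnishes a joint u.c.p.\ extension $\tilde\p\colon\cA\to\IM_n$. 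Translated back, this yields a positive functional $\tilde\Phi$ on $\IM_n(\cA)=\cA\otimes^{\min}\IM_n$ extending $\Phi$, whence $\Phi(b)=\tilde\Phi(b)\geq0$ (since $b$ is min-positive), a contradiction. This is the step where it is essential that \emph{minimal} rather than maximal positivity is used, for it is governed by completely positive maps into matrices, which Boca's theorem assembles from the two factors; a gluing via representations would only produce a maximal-tensor state. Hence $b\in\arch(C_n)$, and I would upgrade this to $b\in C_n$ using the conditional expectations $E_i\colon\cA\to\cA_i$ (the free products of $\id_{\cA_i}$ with $\psi_j(\,\cdot\,)1$ for fixed $\psi_j\in S(\cA_j)$, again from Theorem~\ref{thm:boca}): applying $E_1\otimes\id$ to an approximate decomposition and using finite-dimensionality of $\IM_n$ to take a limit produces an exact $b=b_1+b_2$.

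To pass to $\IB(\ell_2)$, let $a\in(\cA\otimes^{\min}\IB(\ell_2))_+\cap(\cS\otimes\IB(\ell_2))$ and let $\psi_N\colon\IB(\ell_2)\to\IM_N$ be compression $x\mapsto p_Nxp_N$ to the first $N$ coordinates. Then $(\id\otimes\psi_N)(a)$ is min-positive in $\IM_N(\cS)$ (Theorem~\ref{thm:tensorize}), hence by the matrix case splits as $b_1^N+b_2^N$ with $b_i^N\in(\cA_i\otimes^{\min}\IM_N)_+$, viewed inside $\cA_i\otimes^{\min}\IB(\ell_2)$. Applying $E_1\otimes\id$ gives
\[
b_1^N=(1\otimes p_N)(E_1\otimes\id)(a)(1\otimes p_N)-1\otimes c^N,\qquad c^N=(\psi_2\otimes\id)(b_2^N),
\]
with $0\le c^N\le\|a\|\,1$ in $\IB(\ell_2)$. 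Since $\IB(\ell_2)$ is a dual space, $(c^N)$ has a weak-$*$ cluster point $c\geq0$; along the corresponding subnet $b_1^N$ converges ultraweakly to $a_1:=(E_1\otimes\id)(a)-1\otimes c$, which is positive, lies in the \emph{algebraic} tensor product $\cA_1\otimes\IB(\ell_2)$, and whose complement $a_2:=a-a_1$ lies in $\cA_2\otimes\IB(\ell_2)$ and is positive as $\lim b_2^N$. I expect this descent to be the main obstacle: one must guarantee that the limit is an \emph{exact} decomposition landing back in the algebraic tensor product rather than in a completion or bidual, and it is precisely the conditional expectations $E_i$ (which send $\cA_j$ into $\ik1$) together with the weak-$*$ compactness of bounded subsets of $\IB(\ell_2)$ that make the limiting term $1\otimes c$ a genuine elementary tensor, keeping $a_1,a_2$ finite combinations of elementary tensors.
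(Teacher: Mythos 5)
Your argument is correct and follows the paper's proof in all essentials: the matrix case is handled exactly as in the paper (Eidelheit--Kakutani separation, the correspondence of Lemma~\ref{lem:corr}, the common support projection, and Boca's Theorem~\ref{thm:boca}), and the passage to $\IB(\ell_2)$ rests on the same two pillars, namely compression to $\IM_N(\ik)$ and ultraweak compactness of bounded sets inside a fixed subspace $E_i\otimes\IB(\ell_2)$ with $E_i\subset\cA_i$ finite-dimensional and unital. The only difference is cosmetic bookkeeping of the limit: you make the matrix-case decomposition exact and pin down the limit via the conditional expectations $E_i\otimes\id$ together with a weak-$*$ cluster point of the scalar parts $c^N$, whereas the paper keeps the archimedean closure, adds $1/n$ to regain exactness, and uses $\cA_1\cap\cA_2=\ik1$ to confine the summands $a_i(n)$ to $E_i\otimes\IB(\ell_2^n)$ before taking joint ultraweak limit points.
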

\begin{proof}
The ``if'' direction is trivial. For the other direction,
we first deal with the case $a \in (\cS\otimes^{\min}\IB(\ell_2^n))_+$.
Let us consider the cone
\[
C=\{ a_1 + a_2 : a_i\in (\cA_i\otimes^{\min}\IB(\ell_2^n))_+ \}.
\]
Since $C$ is algebraically solid, if $a\notin\arch(C)$, then there is a
self-adjoint linear functional
$\p\colon \cS\otimes\IB(\ell_2^n)\to\ik$ such that
\[
\p(a)< 0 \le \inf_{c\in C}\p(c).
\]
Then, $\p_i=\p|_{\cA_i\otimes^{\min}\IB(\ell_2^n)}$ are self-adjoint positive linear functionals,
and by Lemma~\ref{lem:corr} they correspond to c.p.\ maps $\psi_i\colon \cA_i\to\IB(\ell_2^n)$.
Let $p$ be the support projection of $h=\psi_1(1)=\psi_2(1)$.
Then the maps $\psi'_i(\,\cdot\,)=h^{-1/2}\psi_i(\,\cdot\,)h^{-1/2}|_{p\ell_2^n}$
are u.c.p.\ and extend to a u.c.p.\ map $\psi'$ on $\cA$ by Theorem~\ref{thm:boca}.
It follows that $(h^{1/2}\otimes 1)(\psi'\otimes\id(a))(h^{1/2}\otimes1)\geq0$.
This implies $\p(a)\geq0$ in contradiction with the hypothesis.
Thus, we have shown that $(\cS\otimes^{\min}\IB(\ell_2^n))_+\subset\arch(C)$.

Now, let $a \in (\cS\otimes^{\min}\IB(\ell_2))_+$.
By the previous result, for every $n$, there are $a_i(n)\in(\cA_i\otimes^{\min}\IB(\ell_2^n))_+$
such that $(\id\otimes\Theta_n)(a+1/n)=a_1(n)+a_2(n)$, where
$\Theta_n\colon\IB(\ell_2)\to\IB(\ell_2^n)$ is the compression.
For $R>0$ such that $R1\geq a$, one has $0\le a_i(n) \le (R+1)1$ for all $n$.
Take unital finite-dimensional subspaces $E_i\subset\cA_i$ such that
$a\in (E_1+E_2)\otimes \IB(\ell_2)$.
Since $\cA_1\cap\cA_2=\ik1$, one has $a_i(n)\in E_i\otimes\IB(\ell_2^n)$.
Therefore, one may find ultraweak limit points $a_i$ of $(a_i(n))_{n=1}^\infty$
(jointly for $i=1,2$) in $E_i\otimes\IB(\ell_2)$.
It follows that $a_i\geq0$ and $a=a_1+a_2$.
\end{proof}

\begin{proof}[Proof of Theorem~\ref{thm:kirchberg}]
By Corollary~\ref{cor:pisier}, it suffices to show that
the formal identity map from $A\otimes^{\min}\IB(\ell_2)$
into $A\otimes_{\max}\IB(\ell_2)$ is c.p.\ on $\cS\otimes\IB(\ell_2)$,
where $\cS=A_1+A_2\subset A$.
Since $\IM_n(\ik)\otimes\IB(\ell_2)\cong\IB(\ell_2)$, we only have to
show that it is positive.
But if $a\in \cS\otimes\IB(\ell_2)$ is positive,
then $a=a_1+a_2$ for some $a_i\in (A_i\otimes^{\min}\IB(\ell_2))_+$
by the above lemma.
By assumption, one has $a_i\in\arch(A_i\otimes^{\max}\IB(\ell_2))_+$.
Now, it is easy to see that $a=a_1+a_2\in\arch(A\otimes^{\max}\IB(\ell_2))$.
This completes the proof.
\end{proof}
\subsection{The positive cone of the free group algebras}
Schm\"udgen has proved in 1980s (unpublished, see \cite{nt}) that $\ik[\IF_d]_+$ is
archimedean closed. It was generalized by McCullough (\cite{mccullough}) and
Bakonyi--Timotin (\cite{bt}) to the operator-valued case. Here we give a somewhat
simpler proof of this result. Our proof employs a well-known matrix completion trick,
in conjunction with the geometry of a tree.

For notational simplicity, we fix a Hilbert space $\Hil$ throughout this section and
denote $\IB=\IB(\Hil)$. In particular, $\IB=\ik$ when $\dim \Hil=1$.
We denote by $\IB[\G]$ the group algebra of $\G$ with coefficients in $\IB$.
So, we will view an element $f\in\IB[\G]$ as a function $f\colon\G\to\IB$.
One has
\[
\IB[\G]\cong\IB(\Hil)\otimes^{\max}\ik[\G]
\ \mbox{ and }\
\IB[\G]_+=\{ \sum_{i=1}^n\xi_i^**\xi_i : n\in\IN,\ \xi_i\in\IB[\G]\}.
\]
We first characterize those elements which belong to $\IB[\G]_+$.
We denote by $\rho$ the right regular representation of $\G$ on $\ell_2\G$:
$\rho(a)\delta_t=\delta_{ta^{-1}}$ for $a,t\in\G$; and
for a finite subset $E\subset\G$,
by $\rho_E(a)$ the compression of $\rho(a)$ to $\IB(\ell_2 E)$.
We identify $\IB(\ell_2 E)$ with $\IM_E(\ik)$ and
let $\cT_E\subset\IM_E(\ik)$ be the operator system consisting
of the ``Toeplitz operators,''
\begin{align*}
\cT_E
 &=\{ \rho_E(g) : g\in\ik[\G]\}\\
 &=\{ X\in\IM_E(\ik) : \exists g\in\ik[\G] \mbox{ such that }X=[g(s^{-1}t)]_{s,t\in E} \}.
\end{align*}
\begin{lem}\label{lem:hscondition}
Let $f\in \IB[\G]_\rh$ and $E\subset\G$ be a finite subset such that $\supp f\subset E^{-1}E$.
Then, $f\in\{ \sum_{i=1}^n \xi_i^**\xi_i : n\in\IN,\ \xi_i\in \IB[E]\}$ if and only if
the linear map $\p_f\colon\cT_E\to \IB$, given by $\p_f(\rho_E(a))=f(a)$, is c.p.
One may take $n\le|E|$ in the above factorization of $f$,
and if $\dim\IB=\infty$ in addition, then one may take $n=1$.
\end{lem}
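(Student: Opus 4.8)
The plan is to read complete positivity of $\p_f$ off a Stinespring dilation and to match the dilation data with the convolution formula for hermitian squares. The computational heart is the identity
\[
(\xi^**\xi)(a)=\sum_{s,t\in E,\,s^{-1}t=a}\xi(s)^*\xi(t)\qquad(\xi\in\IB[E]),
\]
which I would obtain by writing $(\xi^**\xi)(a)=\sum_u\xi(ua^{-1})^*\xi(u)$ and substituting $s=ua^{-1}$, $t=u$, together with the matrix expansion $\rho_E(a)=\sum_{s,t\in E,\,s^{-1}t=a}e_{s,t}$. The latter shows $\rho_E(a)^*=\rho_E(a^{-1})$, so that $\p_f$ is automatically self-adjoint once $f^*=f$; this is what makes the question of complete positivity meaningful.

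For the ``only if'' direction, which I expect to be routine, suppose $f=\sum_{i=1}^n\xi_i^**\xi_i$. I would set $V_s\colon\Hil\to\Hil^{\oplus n}$, $V_s\eta=(\xi_i(s)\eta)_{i=1}^n$, and assemble $V\colon\Hil\to\ell_2E\otimes\Hil^{\oplus n}$, $V\eta=(V_s\eta)_{s\in E}$. Using the two displayed identities a direct computation gives
\[
V^*(\rho_E(a)\otimes1)V=\sum_{s,t\in E,\,s^{-1}t=a}\sum_i\xi_i(s)^*\xi_i(t)=f(a),
\]
so $\p_f$ is the restriction to $\cT_E$ of $X\mapsto V^*(X\otimes1)V$. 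The latter is a compression of the amplification $*$-representation $X\mapsto X\otimes1$ of $\IM_E(\ik)$, hence completely positive, and therefore so is $\p_f$.

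For the harder implication I would invoke the dilation machinery. By Arveson's Extension Theorem (Theorem~\ref{thm:arveson}) the c.p.\ map $\p_f\colon\cT_E\to\IB$ extends to a c.p.\ map $\bar\p_f\colon\IM_E(\ik)\to\IB$, and the Stinespring Dilation Theorem (Theorem~\ref{thm:stinespring}) furnishes a Hilbert space $\cK$, a unital $*$-representation $\pi$, and $V$ with $\bar\p_f=V^*\pi(\,\cdot\,)V$. Since every unital $*$-representation of the finite matrix algebra $\IM_E(\ik)=\IB(\ell_2E)$ is unitarily an amplification, I may take $\pi(X)=X\otimes1$ on $\ell_2E\otimes\cK$, and writing $V=(V_s)_{s\in E}$ with $V_s\colon\Hil\to\cK$ and evaluating at $\rho_E(a)$ yields $f(a)=\sum_{s,t\in E,\,s^{-1}t=a}V_s^*V_t$. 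Thus $F:=[V_s^*V_t]_{s,t\in E}\in\IM_E(\IB)$ is positive, being a Gram matrix (explicitly $R^*R$ for $R\colon\ell_2E\otimes\Hil\to\cK$, $R((\eta_t)_t)=\sum_tV_t\eta_t$). Factoring $F=T^*T$ with $T=F^{1/2}\in\IM_E(\IB)$ and setting $\xi_i(s):=T_{i,s}$ for $i\in E$, the block identity $(T^*T)_{s,t}=\sum_{i\in E}T_{i,s}^*T_{i,t}$ combined with the convolution formula gives $f=\sum_{i\in E}\xi_i^**\xi_i$, whence $n\le|E|$.

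For the final assertion, when $\dim\IB=\infty$ one has $\dim\Hil=\infty$, so there is a unitary $U\colon\ell_2E\otimes\Hil\to\Hil$; factoring $F=S^*S$ with $S:=UF^{1/2}\colon\ell_2E\otimes\Hil\to\Hil$ and regarding $S$ as a single row $(\xi(t))_{t\in E}$ of operators in $\IB$ gives $F=[\xi(s)^*\xi(t)]_{s,t}$ and hence $f=\xi^**\xi$ with $n=1$. The step demanding the most care is the conversion of the abstract Stinespring space $\cK$ into honest $\IB$-valued coefficients with the advertised control on their number: once the identity linking $\rho_E$ to the convolution $\xi^**\xi$ is in place, the whole problem collapses to choosing a factorization of the single positive matrix $F$, with $|E|$ rows in general and one row precisely when $\Hil$ is infinite-dimensional and can absorb $\ell_2E\otimes\Hil$.
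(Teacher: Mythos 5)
Your proof is correct and follows essentially the same route as the paper: the convolution identity $(\xi^**\xi)(a)=\sum_{s^{-1}t=a}\xi(s)^*\xi(t)$, Arveson extension of $\p_f$ to $\IM_E(\ik)$, positivity of the matrix $[\bar\p_f(e_{s,t})]_{s,t}$, and factorization via its square root (with $|E|$ rows in general, one row when $\Hil$ absorbs $\ell_2E\otimes\Hil$). The only cosmetic difference is that you pass through an explicit Stinespring dilation to see that $F=[V_s^*V_t]_{s,t}=[\bar\p_f(e_{s,t})]_{s,t}$ is positive, whereas the paper reads this off directly from complete positivity applied to the positive matrix $[e_{s,t}]_{s,t}$.
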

\begin{proof}
If $f=\sum_i \xi_i^**\xi_i$, then for $X=[X_{s,t}]_{s,t}=\rho_E(g)\in\cT_E$, one has
\[
\p_f(X)=\sum_a g(a)f(a)= \sum_i\sum_{a,t} g(a)\xi_i^*(at^{-1})\xi_i(t)
=\sum_i\sum_{s,t} X_{s,t}\xi_i(s)^*\xi_i(t).
\]
 From the latter expression, it is not too hard to see $\p_f$ is c.p.

Conversely, if $\p_f$ is c.p., then it extends to a c.p.\ map,
still denoted by $\p_f$, on $\IM_E(\ik)$ by Theorem~\ref{thm:arveson}.
It follows that $b=[\p_f(e_{s,t})]_{s,t\in E}$ is positive in $\IM_E(\IB)$.
Develop $b^{1/2}$ as $[\xi_i(t)]_{i,t\in E}$. Then, one has
\[
(\sum_i \xi_i^**\xi_i)(a)
=\sum_i\sum_t \xi_i(ta^{-1})^*\xi_i(t)
=\sum_t b_{ta^{-1},t}
=\p_f(\rho_E(a))=f(a).
\]
We note that if $\dim\IB=\infty$, then there are isometries $S_i\in\IB$
with mutually orthogonal ranges, and $\xi=\sum_i S_i\xi_i\in\IB[E]$
satisfies $f=\xi^**\xi$.
\end{proof}
We need the following well-known matrix completion trick. 
\begin{lem}\label{lem:matrixcompletion}
Let $\Hil_0\oplus\Hil_1\oplus\Hil_2$ be a direct sum of Hilbert spaces.
Suppose
\[
\begin{bmatrix} A & X & \questionbox\\ X^* & B & Y\\ \questionbox & Y^* & C\end{bmatrix} \in \IB(\Hil_0\oplus\Hil_1\oplus\Hil_2)
\]
is a partially defined operator matrix such that its compressions on
$\Hil_0\oplus\Hil_1$ and $\Hil_1\oplus\Hil_2$ are both positive.
Then one can complete the unspecified block $\questionbox$ so that the resultant operator matrix
is positive on $\Hil_0\oplus\Hil_1\oplus\Hil_2$.
\end{lem}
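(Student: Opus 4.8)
The plan is to reduce the three-block completion problem to the classical two-variable case by exhibiting the correct entry in the $(0,2)$-block explicitly. The key observation is that the obstruction to positivity lives entirely in the relationship between the two overlapping $2\times 2$ blocks, which share the common corner $B$ on $\Hil_1$. The natural guess for the missing entry is the composition $X^*B^{-1}Y$ (suitably interpreted when $B$ is not invertible), since this is precisely the entry that makes the Schur complement computations for the full matrix factor through those of the two given positive compressions.

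\medskip

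First I would reduce to the case where $B$ is invertible. If $B\ge 0$ is merely positive semidefinite, I would replace $B$ by $B+\e1$ for $\e>0$, carry out the construction, and then pass to a limit point of the resulting completions (using that the completed entries will be uniformly bounded, by the norm estimate that falls out of the construction). Assuming $B$ is invertible, positivity of the compression on $\Hil_0\oplus\Hil_1$ is equivalent, by the Schur complement criterion (the same $\left[\begin{smallmatrix} s & t^* \\ t & 1\end{smallmatrix}\right]\ge0 \Leftrightarrow t^*t\le s$ used in Lemma~\ref{lem:corr}), to $A\ge XB^{-1}X^*$, and positivity of the compression on $\Hil_1\oplus\Hil_2$ is equivalent to $C\ge Y^*B^{-1}Y$.

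\medskip

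Next I would set the unspecified block equal to $Z=XB^{-1}Y$ and verify that the full $3\times 3$ matrix is then positive. The cleanest way is to compute the Schur complement of the middle-and-bottom block $\left[\begin{smallmatrix} B & Y \\ Y^* & C\end{smallmatrix}\right]$, which is invertible and positive by assumption, inside the full matrix. After this compression against $\Hil_0$, the Schur complement becomes
\[
A - \begin{bmatrix} X & Z\end{bmatrix}\begin{bmatrix} B & Y \\ Y^* & C\end{bmatrix}^{-1}\begin{bmatrix} X^* \\ Z^*\end{bmatrix},
\]
and the choice $Z=XB^{-1}Y$ is exactly the one that makes this expression collapse to $A - XB^{-1}X^*$, which is $\ge0$ by the first hypothesis. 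A short computation with the block inverse (or with the factorization of $\left[\begin{smallmatrix} B & Y \\ Y^* & C\end{smallmatrix}\right]$ through its own Schur complement) confirms the cancellation, so the full matrix is positive.

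\medskip

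\textbf{The main obstacle} I expect is the degenerate case where $B$ fails to be invertible and the $\e$-perturbation argument must be controlled. One must check that the entries $Z_\e=X(B+\e1)^{-1}Y$ stay bounded as $\e\searrow0$: this is not automatic, but it follows from the inequalities $X(B+\e1)^{-1}X^*\le A$ and $Y^*(B+\e1)^{-1}Y\le C$, which force $X(B+\e1)^{-1/2}$ and $(B+\e1)^{-1/2}Y$ to be uniformly bounded, whence $Z_\e=\bigl(X(B+\e1)^{-1/2}\bigr)\bigl((B+\e1)^{-1/2}Y\bigr)$ is uniformly bounded. A weak-$*$ limit point $Z$ then completes the matrix, and positivity passes to the limit. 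The only subtlety is that the limit may require passing through a subnet, but since positivity of the completed matrix is a closed condition in the weak operator topology this causes no difficulty.
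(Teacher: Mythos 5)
Your completion is the same one the paper uses, namely $Z=XB^{-1}Y$, but you verify it by a different mechanism. The paper first normalizes by the diagonal, setting $X_1=A^{-1/2}XB^{-1/2}$ and $Y_1=B^{-1/2}YC^{-1/2}$; the point is that positivity of the two given compressions says exactly that these are \emph{well-defined contractions}, so no invertibility assumption or $\e$-perturbation is ever needed. It then writes
$\left[\begin{smallmatrix} 1 & X_1 & X_1Y_1\\ X_1^* & 1 & Y_1\\ Y_1^*X_1^* & Y_1^* & 1\end{smallmatrix}\right]$
as an explicit product of a lower-triangular operator matrix with its adjoint, which makes positivity manifest and yields $Z=A^{1/2}X_1Y_1C^{1/2}$. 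You instead compute the Schur complement of the lower-right $2\times2$ block and check that it collapses to $A-XB^{-1}X^*\ge0$; the cancellation is correct (the row $\begin{bmatrix} X & XB^{-1}Y\end{bmatrix}$ times the block inverse of $\left[\begin{smallmatrix} B & Y\\ Y^* & C\end{smallmatrix}\right]$ is $\begin{bmatrix} XB^{-1} & 0\end{bmatrix}$). Two remarks. First, making $B$ invertible does not make $\left[\begin{smallmatrix} B & Y\\ Y^* & C\end{smallmatrix}\right]$ invertible ``by assumption'' as you assert; you should perturb the whole diagonal by $\e1$, which preserves positivity of both compressions and makes every block and Schur complement you invert genuinely invertible. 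Second, your uniform bound on $Z_\e=\bigl(X(B+\e1)^{-1/2}\bigr)\bigl((B+\e1)^{-1/2}Y\bigr)$, obtained from $X(B+\e1)^{-1}X^*\le A$ and $Y^*(B+\e1)^{-1}Y\le C+\e1$, is exactly the contractivity of $X_1$ and $Y_1$ in disguise, so your limit argument is sound. What the paper's route buys is that the degenerate case is absorbed into the definition of $X_1,Y_1$ and no limiting procedure is required; what yours buys is a shorter verification once invertibility has been arranged.
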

\begin{proof}
We note that $\left[\begin{smallmatrix} A & X \\ X^* & B\end{smallmatrix}\right]\geq0$
if and only if $X_1:=A^{-1/2}XB^{-1/2}$ is well-defined and contractive
(see Lemma 3.1 in \cite{paulsen}).
Likewise for $Y_1:=B^{-1/2}YC^{-1/2}$.
Now, since
\[
\left[\begin{smallmatrix}
1 & X_1 & X_1Y_1\\ X_1^* & 1 & Y_1\\ Y_1^*X_1^* & Y_1^* & 1
\end{smallmatrix}\right] = \left[\begin{smallmatrix}
1 & 0 & 0\\ X_1^* & (1-X_1^*X_1)^{1/2} & 0\\ Y_1^*X_1^* & Y_1^*(1-X_1^*X_1)^{1/2} & (1-Y_1^*Y_1)^{1/2}
\end{smallmatrix}\right] \left[\begin{smallmatrix}
1 & X_1 & X_1Y_1\\ 0 & (1-X_1^*X_1)^{1/2} & (1-X_1^*X_1)^{1/2}Y_1\\ 0 & 0 & (1-Y_1^*Y_1)^{1/2}
\end{smallmatrix}\right],
\]
the operator matrix
\[
\left[\begin{smallmatrix}
A & X & Z \\ X^* & B & Y\\ Z^* & Y^* & C
\end{smallmatrix}\right] = \left[\begin{smallmatrix}
A^{1/2} & 0 & 0\\ 0 & B^{1/2} & 0\\ 0 & 0 & C^{1/2}
\end{smallmatrix}\right] \left[\begin{smallmatrix}
1 & X_1 & X_1Y_1\\ X_1^* & 1 & Y_1\\ Y_1^*X_1^* & Y_1^* & 1
\end{smallmatrix}\right] \left[\begin{smallmatrix}
A^{1/2} & 0 & 0\\ 0 & B^{1/2} & 0\\ 0 & 0 & C^{1/2}
\end{smallmatrix}\right]
\]
is positive for $Z=XB^{-1}Y$.
\end{proof}

Let $s_1,\ldots,s_d$ be the canonical generators of the free group $\IF_d$.
A pair $\{s,t\}$ in $\IF_d$ is adjacent in the \emph{Cayley graph} of $\IF_d$ if
$st^{-1}\in\{ s_1^{\pm1},\ldots,s_d^{\pm1}\}$.
The Cayley graph of $\IF_d$ is a simplicial tree.
We say a subset $E\subset\IF_d$ is \emph{grounded}
if it contains the unit $1$ and is connected in the Cayley graph.
Thus, $E$ is grounded if and only if
$t\in E$ and $t=t't''$ without cancelation imply $t''\in E$.
\begin{lem}[Proposition 4.4 in \cite{bt}]\label{lem:equivextension}
Let $g\in\IB[\IF_d]_\rh$ and $E\subset\IF_d$ be a finite grounded subset.
If $(\rho_E\otimes\id_{\IB})(g)\geq0$ in $\IM_E(\IB)$, then there is a positive type function $g'\colon\IF_d\to\IB$
such that $g'=g$ on $E^{-1}E$.
\end{lem}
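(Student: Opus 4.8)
The plan is to exhibit $g$ as the diagonal matrix coefficient $a\mapsto V^*\pi(a)V$ of a genuine unitary representation $\pi$ of $\IF_d$, restricted to $E^{-1}E$; granting this, the function $g'(t):=V^*\pi(t)V$ is automatically of positive type by Corollary~\ref{cor:posdeffunct} and agrees with $g$ on $E^{-1}E$, which is all we need. To produce $\pi$ and $V$, I first run a GNS-type construction on the hypothesis $M:=(\rho_E\otimes\id_{\IB})(g)=[g(s^{-1}t)]_{s,t\in E}\geq0$. On the algebraic span of symbols $\{v_t(\xi):t\in E,\ \xi\in\Hil\}$ I put the form $\ip{v_s(\xi),v_t(\eta)}=\ip{g(t^{-1}s)\xi,\eta}$; it is positive semidefinite exactly because $M\geq0$, so it gives a Hilbert space $\cK_0$. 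By construction $\ip{v_t(\xi),v_s(\eta)}=\ip{g(s^{-1}t)\xi,\eta}$ encodes the values of $g$ on $E^{-1}E$, and (since $1\in E$) the map $V\colon\Hil\to\cK_0$, $V\xi=v_1(\xi)$, is a bounded operator.

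The key step is to move these vectors around by unitaries indexed by the generators. For each $i$ define the partial shift $u_i\colon v_t(\xi)\mapsto v_{s_it}(\xi)$ on $\lh\{v_t(\xi):t\in E,\ s_it\in E\}$; the identity $\ip{v_{s_it}(\xi),v_{s_it'}(\eta)}=\ip{g(t'^{-1}t)\xi,\eta}=\ip{v_t(\xi),v_{t'}(\eta)}$ shows that $u_i$ is a well-defined partial isometry on $\cK_0$. The standard unitary dilation of a contraction---the $2\times2$ completion $\left[\begin{smallmatrix}u_i & (1-u_iu_i^*)^{1/2}\\ (1-u_i^*u_i)^{1/2} & -u_i^*\end{smallmatrix}\right]$, a completion in the spirit of Lemma~\ref{lem:matrixcompletion}---dilates every $u_i$ to a unitary $U_i$ on the common space $\cK:=\cK_0\oplus\cK_0$, and $U_i$ still carries $v_t(\xi)$ to $v_{s_it}(\xi)$ whenever $t,s_it\in E$. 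Because $\IF_d$ is free, the assignment $s_i\mapsto U_i$ extends to a unitary representation $\pi$ of $\IF_d$ on $\cK$.

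It then remains to verify $\pi(t)V=v_t$ for every $t\in E$, and this is exactly where the tree geometry enters. A grounded set is suffix-closed, so each $t\in E\setminus\{1\}$ factors as $t=at'$ with $a\in\{s_i^{\pm1}\}$ and parent $t'\in E$, $|t'|=|t|-1$. Arguing by induction on word length, $\pi(t)V=\pi(a)\,\pi(t')V=U_a v_{t'}=v_t$, where the middle equality holds because the edge $\{t',t\}$ lies in $E$, so that $U_a$ (which is $U_i$ or $U_i^{-1}$) acts on $v_{t'}$ as the corresponding partial shift. Consequently, for $w=s^{-1}t\in E^{-1}E$ one computes $\ip{V^*\pi(w)V\xi,\eta}=\ip{\pi(t)V\xi,\pi(s)V\eta}=\ip{v_t(\xi),v_s(\eta)}=\ip{g(w)\xi,\eta}$, so $g'(t):=V^*\pi(t)V$ is the desired positive type extension of $g$.

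The main obstacle is the induction $\pi(t)V=v_t$: it relies on groundedness (equivalently, convexity of $E$ as a subtree through $1$) to guarantee that the geodesic from $1$ to $t$ stays inside $E$, so the dilated unitaries are only ever applied where they still coincide with the partial shifts $u_i$; for a non-convex $E$ one could control $V^*\pi(t)V$ only along a single branch. Everything else---positivity of the GNS form, well-definedness of the $u_i$, and boundedness of $V$---is a routine consequence of $M\geq0$. (Alternatively, one can extend $g$ one vertex at a time along the tree, using Lemma~\ref{lem:matrixcompletion} to complete each newly created block and the fact that a principal submatrix of a positive matrix is positive in order to reduce the positive type condition to the grounded case.)
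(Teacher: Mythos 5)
Your proof is correct, but it takes a genuinely different route from the one in the paper. The paper argues by one-vertex-at-a-time extension: for a grounded $E'=E\cup\{t_0\}$ with $t_0=s_it_0''$ it first identifies exactly which entries of $[g(s^{-1}t)]_{s,t\in E'}$ are already forced (the combinatorial claim $\{s\in E: s^{-1}t_0\in E^{-1}E\}=\{s\in E: s_i^{-1}s\in E\}$), fills in the remaining unspecified block by the $3\times3$ completion of Lemma~\ref{lem:matrixcompletion} (positivity of the relevant corner coming from the right-equivariant map $s\mapsto s_i^{-1}s$), and checks that the completed matrix is still Toeplitz; iterating along an exhaustion of $\IF_d$ by grounded sets gives $g'$. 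You instead perform a Kolmogorov/GNS decomposition of the single positive matrix $(\rho_E\otimes\id_{\IB})(g)$, note that the shifts $v_t\mapsto v_{s_it}$ are partial isometries, dilate them to unitaries by the $2\times 2$ Halmos completion, and use freeness of $\IF_d$ together with suffix-closedness of $E$ to propagate $\pi(t)V=v_t$ along geodesics from $1$. This is essentially the dilation trick the paper saves for Choi's argument inside the proof of Theorem~\ref{thm:schmudgen}; you front-load it into the lemma and obtain the extension in one shot as a genuine matrix coefficient $V^*\pi(\,\cdot\,)V$, which is of positive type by Corollary~\ref{cor:posdeffunct}. Your induction is sound in both cases of the first letter: for $t=s_it'$ the vector $v_{t'}$ lies in the initial space of $u_i$, so the defect operator kills it; for $t=s_i^{-1}t'$ one uses that $U_i$ is unitary and already sends $v_t$ to $v_{t'}$, whence $U_i^{-1}v_{t'}=v_t$. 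What the paper's route buys is the finer intermediate statement that each one-step extension can again be chosen finitely supported with controlled support (closer to the Bakonyi--Timotin original, and reusing Lemma~\ref{lem:matrixcompletion}); what yours buys is brevity and a single construction that unifies the lemma with the dilation step of Theorem~\ref{thm:schmudgen}.
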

\begin{proof}
By inductive construction, it suffices to show that
for every grounded subset $E'\supset E$ such that $|E'\setminus E|=1$,
one can find $g'\in\IB[\IF_d]_\rh$ such that $g'=g$ on $E^{-1}E$ and $(\rho_{E'}\otimes\id_{\IB})(g')\geq0$.
Thus, let $E'=E\cup\{ t_0\}$ with $t_0=s_it_0''$ for $t_0''\in E$
and one of the generators $s_i$. (The case for $t_0=s_i^{-1}t_0''$ is similar.)
We claim that
\[
\{ s\in E : s^{-1}t_0 \in E^{-1}E\} = \{ s\in E : s_i^{-1}s\in E\}.
\]
Indeed, if $s=s_is''$ for some $s''\in E$, then $s^{-1}t_0=(s'')^{-1}t_0''\in E^{-1}E$.
To prove the converse inclusion, suppose $s^{-1}t_0\in E^{-1}E$.
By groundedness, there is a non-trivial decomposition $t_0=pq$ without cancelation
such that $p^{-1}s\in E$. Note that the first letter of $p$ is $s_i$.
If the reduced form of $s$ starts by $s_i$, then $s_i^{-1}s\in E$ by groundedness.
Otherwise, there is no cancelation between $p^{-1}$ and $s$, and hence $p^{-1}s\in E$
implies $s_i^{-1}s\in E$ by groundedness. The claim is proved.
We denote the above subset by $E_1$ and set $E_0=E\setminus E_1$.

Now we extend $\rho_E(g)$ to a partially defined matrix $X=[X_{s,t}]_{s,t\in E'}$
such that $X_{s,t}=g(s^{-1}t)$ for $(s,t)\in E'\times E'$ with $s^{-1}t\in E^{-1}E$.
The unspecified entries of $X$
are those for $(s,t_0)$ and $(t_0,s)$ with $s\in E_0$.
We apply Lemma~\ref{lem:matrixcompletion} to
$\Hil_0=\ell_2 E_0$, $\Hil_1=\ell_2 E_1$, and $\Hil_2=\ell_2\{t_0\}$.
We note that the compression of $X$ on $\Hil_1\oplus\Hil_2$ is positive, thanks to
the right equivariant map $E_1\cup\{ t_0 \}\ni s\mapsto s_i^{-1}s\in E$.
Thus, one obtains a fully defined positive matrix $X$.
We observe that $s^{-1}t_0\neq t_0^{-1}s'$ for any $s,s'\in E_0$.
Indeed, the shortest right segments that does not belong to $E$ is
$s_it_0''$ in the left hand side and $s_i^{-1}s'$ in the right hand side.
It follows that the entry $X_{s,t}$ of the positive matrix $X\in\IM_{E'}(\IB)$ depends
only on $s^{-1}t$ and $X$ is of the form $(\rho_{E'}\otimes\id_{\IB})(g')$.
\end{proof}

The following is slightly more precise than
Theorem 7.1 in \cite{bt} (and Theorem 0.1 in \cite{mccullough}).
It implies $\IB(\Hil)\otimes^{\max}\ik[\IF_d]=\IB(\Hil)\otimes^{\min}\ik[\IF_d]$,
and hence Theorem~\ref{thm:kirchberg}.

\begin{thm}[\cite{bt}]\label{thm:schmudgen}
Let $\IB=\IB(\Hil)$, $f\in \IB[\IF_d]_\rh$, and $E\subset\IF_d$ be a grounded subset
such that $\supp f\subset E^{-1}E$.
Assume that $(\id_{\IB}\otimes\pi)(f)\geq0$ for
every finite-dimensional unitary representation $\pi$ of $\IF_d$
(of dimension at most $2|E|\dim\Hil$).
Then, there are $n\le|E|$ and $\xi_1,\ldots,\xi_n\in \IB[\IF_d]$ such that
$\supp\xi_i\subset E$ and $f=\sum_{i=1}^n \xi_i^**\xi_i$.
If $\dim\IB=\infty$ in addition, then one may take $n=1$.
\end{thm}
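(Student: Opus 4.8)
The plan is to reduce everything to the completely positive (c.p.) characterisation already available in Lemma~\ref{lem:hscondition} and then to feed the finite-dimensional representation hypothesis into the tree completion of Lemma~\ref{lem:equivextension}. By Lemma~\ref{lem:hscondition}, the function $f$ (supported in $E^{-1}E$) factors as $\sum_{i=1}^n\xi_i^**\xi_i$ with $\supp\xi_i\subset E$, $n\le|E|$, and $n=1$ when $\dim\IB=\infty$, precisely when the Toeplitz map $\p_f\colon\cT_E\to\IB$, $\p_f(\rho_E(a))=f(a)$, is c.p.; so the whole problem is to prove that $\p_f$ is c.p., after which the bound on the number of squares is automatic. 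I would not try to produce the $\xi_i$ by hand, since the naive Gram factorisation of $[f(s^{-1}t)]_{s,t\in E}$ fails because of the multiplicities $\#\{t\in E: ta^{-1}\in E\}$; the point of Lemma~\ref{lem:hscondition} is that Arveson's extension absorbs exactly these multiplicities.

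To show $\id\otimes\p_f$ is positive at level $n$, I take $W\in\IM_n(\cT_E)_+$. Writing $W=[\rho_E(g_{k,l})]_{k,l}$ and reorganising the legs, $W$ is the block-Toeplitz matrix $[C_{s^{-1}t}]_{s,t\in E}$ with $\IM_n(\ik)$-valued symbol $C=\sum_a C_a\delta_a$, where $C_a=[g_{k,l}(a)]_{k,l}$ and $\supp C\subset E^{-1}E$; moreover $W\ge0$ is exactly the hypothesis $(\rho_E\otimes\id)(C)\ge0$ of Lemma~\ref{lem:equivextension} for the coefficient algebra $\IM_n(\ik)$. Since $E$ is grounded, that lemma extends $C$ to a positive type function $C'\colon\IF_d\to\IM_n(\ik)$ with $C'=C$ on $E^{-1}E$, and Corollary~\ref{cor:posdeffunct} writes $C'(a)=W_0^*\pi(a)W_0$ for a unitary representation $\pi$ of $\IF_d$. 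Because $\supp f\subset E^{-1}E$, I may replace $C_a$ by $C'(a)$ under the sum, so that
\[
(\id\otimes\p_f)(W)=\sum_a C_a\otimes f(a),
\]
which, after swapping the two tensor legs (positivity being preserved), equals
\[
\sum_a f(a)\otimes C'(a)=(1\otimes W_0)^*(\id_{\IB}\otimes\pi)(f)(1\otimes W_0).
\]
By hypothesis $(\id_{\IB}\otimes\pi)(f)\ge0$, hence $(\id\otimes\p_f)(W)\ge0$. As $n$ and $W$ were arbitrary, $\p_f$ is c.p., and Lemma~\ref{lem:hscondition} completes the factorisation.

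It remains to honour the dimension bound $2|E|\dim\Hil$. When $\dim\Hil=\infty$ the bound is vacuous and the argument above, ranging over all $n$ and all finite-dimensional $\pi$, already suffices. When $N=\dim\Hil<\infty$ we have $\IB=\IM_N(\ik)$, and by the correspondence in Lemma~\ref{lem:corr} a map into $\IM_N(\ik)$ is c.p.\ as soon as $\id\otimes\p_f$ is positive at the single level $n=N$; thus only $\IM_N(\ik)$-valued symbols $C$ enter. The positive type extension $C'$ in Lemma~\ref{lem:equivextension} is assembled by the one-step matrix completions of Lemma~\ref{lem:matrixcompletion} along the tree, and its Naimark dilation is obtained by completing the resulting partial isometries to unitaries on a space of dimension at most twice that of $\ell_2 E\otimes\IC^N$; this keeps $\dim\pi\le 2|E|N=2|E|\dim\Hil$.

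I expect the main obstacle to be the dimension bookkeeping in this last step: it is routine that \emph{some} finite-dimensional $\pi$ works, but controlling its dimension by $2|E|\dim\Hil$ forces one to track the minimal dilation through the tree completion and to use the reduction to level $n=\dim\Hil$ supplied by Lemma~\ref{lem:corr}. A secondary point I would verify with care is the identification $W\ge0\Leftrightarrow(\rho_E\otimes\id)(C)\ge0$ that lets Lemma~\ref{lem:equivextension} run at the matrix level---i.e.\ that $\IM_n(\cT_E)$ carries exactly the operator-system structure of the $\IM_n(\ik)$-valued Toeplitz symbols.
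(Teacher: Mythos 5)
Your reduction of the theorem to the complete positivity of $\p_f$ via Lemma~\ref{lem:hscondition}, the identification of $\IM_n(\cT_E)_+$ with $\IM_n(\ik)$-valued Toeplitz symbols $C$ satisfying $(\rho_E\otimes\id)(C)\geq0$, and the appeal to Lemma~\ref{lem:equivextension} followed by Corollary~\ref{cor:posdeffunct} all coincide with the paper's argument. The gap is at the line ``By hypothesis $(\id_{\IB}\otimes\pi)(f)\ge0$'': the representation $\pi$ produced by Corollary~\ref{cor:posdeffunct} is the Naimark--GNS dilation of the positive-type function $C'$ on the whole infinite group $\IF_d$, and that dilation is in general \emph{infinite}-dimensional, whereas the hypothesis of the theorem only concerns finite-dimensional representations. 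So the hypothesis does not apply to this $\pi$ --- in either the case $\dim\Hil<\infty$ or $\dim\Hil=\infty$. Your assertion that when $\dim\Hil=\infty$ ``the argument above \ldots already suffices'' is therefore incorrect (the bound $2|E|\dim\Hil$ is vacuous there, but the reduction from the infinite-dimensional dilation to finite-dimensional representations is still indispensable), and your sketch that the Naimark dilation of $C'$ can be realized on a space of dimension at most twice that of $\ell_2E\otimes\IC^N$ misdescribes it: only a finite-dimensional \emph{partial} dilation valid on $E^{-1}E$ can be so manufactured, and constructing it is exactly the step you have omitted.

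The paper closes this gap with Choi's compression trick. Supposing $(\id_{\IB}\otimes\sigma)(f)\not\geq0$ for the dilation $\sigma$, one picks a witnessing vector $\zeta\in\Hil\otimes\Hil_0$ with $\dim\Hil_0\le\dim\Hil$, sets $\Hil_1=\lh\sigma(E)\Hil_0$ (of dimension at most $|E|\dim\Hil$), compresses the generators to $X_i=P_{\Hil_1}\sigma(s_i)|_{\Hil_1}$, and dilates each $X_i$ to a unitary on $\Hil_1\oplus\Hil_1$ by the standard rotation matrix. Groundedness of $E$ guarantees $(1\otimes\pi)(t)(\zeta\oplus0)=(1\otimes\sigma)(t)\zeta\oplus0$ for all $t\in E$, whence $\ip{(\id_{\IB}\otimes\pi)(f)(\zeta\oplus0),\zeta\oplus0}<0$ for a finite-dimensional unitary representation $\pi$ of dimension at most $2|E|\dim\Hil$, contradicting the hypothesis. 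This is the missing idea; it is also the true source of the dimension bound in the statement, rather than bookkeeping through the tree completion of Lemma~\ref{lem:equivextension} as you propose.
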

\begin{proof}
By Lemma~\ref{lem:hscondition} (we may assume $E$ is finite),
it suffices to show $\p_f\colon\cT_E\to\IB(\Hil)$ is c.p.
Let $g\colon\IF_d\to\IM_k(\ik)$ be such that $(\rho_E\otimes\id)(g)\geq0$.
By Lemma~\ref{lem:equivextension}, we may assume that $g$ is of positive type.
Hence, by Corollary~\ref{cor:posdeffunct}, there are a unitary representation
$\sigma$ on $\Hil_\sigma$ and $V\in\IB(\ell_2^k,\Hil_\sigma)$ such that
$g(a)=V^*\sigma(a)V$.
One has to show that
\[
(\p_f\otimes\id)((\rho_E\otimes\id)(g))=\sum_{a\in E^{-1}E} f(a)\otimes g(a)
=(1\otimes V)^*((\id_{\IB}\otimes\sigma)(f))(1\otimes V)
\]
is positive. We will follow Choi's proof of RFD for $\mathrm{C}^*\IF_d$.
Suppose the contrary that $(\id_{\IB}\otimes\sigma)(f)$ is not positive.
Then, there is a vector $\zeta$ in the algebraic tensor product $\Hil\otimes\Hil_\sigma$
such that $\ip{(\id_{\IB}\otimes\sigma)(f)\zeta,\zeta}<0$.
We may find a finite-dimensional subspace $\Hil_0\subset\Hil_\sigma$
of dimension at most $\dim\Hil$ such that $\zeta\in \Hil\otimes\Hil_0$.
Let $\Hil_1=\lh\sigma(E)\Hil_0$ and
write $X_i=P_{\Hil_1}\sigma(s_i)|_{\Hil_1}\in\IB(\Hil_1)$
for the generators $s_1,\ldots,s_d\in\IF_d$. We define the
unitary representation $\pi$ of $\IF_d$ by assigning to each $s_i$ a unitary element
\[
\pi(s_i)=\begin{bmatrix} X_i & (1-X_iX_i^*)^{1/2}\\ (1-X_i^*X_i)^{1/2} & -X_i^*\end{bmatrix}
 \in\IM_2(\IB(\Hil_1))_\ru.
\]
It follows that
$(1\otimes\pi)(t)(\zeta\oplus 0)=(1\otimes\sigma)(t)\zeta\oplus 0$ for every $t\in E$,
and
$\ip{(\id_{\IB}\otimes\pi)(f)(\zeta\oplus 0),\zeta\oplus 0}=\ip{(\id_{\IB}\otimes\sigma)(f)\zeta,\zeta}<0$,
in contradiction with the assumption.
\end{proof}

Rudin (\cite{rudin}) has proved existence of $f\in\arch(\ik[\IZ^2]_+)$
such that $\supp f\subset E^{-1}E$ with $E=[0,N]^2$ but $f$ \emph{cannot} be expressed as
$\sum_i \xi_i^**\xi_i$ for any $\xi_i\in\ik[E]$. This implies that the matrix completion
problem of the following type has a negative answer in general:
Given Toeplitz matrices $A_k=A_{-k}^*\in\IM_n(\ik)$, $|k|\le m-1$, whose
Toeplitz operator matrix $[A_{i-j}]_{i,j=1}^{m}\in\IM_{m}(\IM_n(\ik))$
is positive, can one find a Toeplitz matrix $A_m=A_{-m}^*\in\IM_n(\ik)$ so that
$[A_{i-j}]_{i,j=1}^{m+1}\in\IM_{m+1}(\IM_n(\ik))$ is still positive?
We note that nonetheless $\ik[\IZ^2]_+$ is archimedean closed (\cite{scheiderer}).
\subsection{Kirchberg's Conjecture}\label{sec:kirchberg}
In his seminal paper \cite{kirchberg}, Kirchberg has shown that
the Connes Embedding Conjecture (see Section~\ref{sec:cec}) is
equivalent to several other important conjectures in operator algebra theory,
one of which is Kirchberg's Conjecture (KC) that
\[
\mathrm{C^*}\IF_d\otimes_{\max}\mathrm{C^*}\IF_d
=\mathrm{C^*}\IF_d\otimes_{\min}\mathrm{C^*}\IF_d
\]
holds for some/all $d\geq2$.
In this section, we assume the scalar field is complex, $\ik=\IC$,
and give the proof of the equivalence. The proofs of this section are
analytically involved, not so self-contained, and probably off the
scope of this note, but we include them because some results seem to
be new or at least not well documented in the literature.
Consult \cite{bo} for technical terms in the proofs which are not
explained in this note.

Recall that the \emph{opposite algebra} of an algebra $\cA$ is the algebra
\[
\cA^{\op}=\{ a^{\op} : a\in\cA\}
\]
which has the same linear and $*$-positive structures as $\cA$,
but has opposite multiplication $a^{\op}b^{\op}=(ba)^{\op}$.
It is $*$-isomorphic to the \emph{complex conjugate} $\bar{\cA}$
of $\cA$ by $a^{\op}\leftrightarrow \bar{a}^*$.
We note that $\IC[\G]^\op\cong\IC[\G]$ via $f\leftrightarrow \check{f}$, where $\check{f}(s)=f(s^{-1})$.

\begin{thm}[\cite{kirchberg}]\label{thm:kuq}
Let $A$ be a \ca with a tracial state $\tau$, and $u_1,u_2,\ldots$ be
a dense sequence in the unitary group $A_\ru$ of $A$.
Let $(M,\tau)$ be a finite von Neumann algebra and suppose that
there is a sequence $v_1,v_2,\ldots$ of unitary elements in $M$
such that $\tau(u_i^*u_j)=\tau(v_i^*v_j)$ for all $i,j$.
Then, there is a projection $p\in M$ such that $\pi_\tau(A)''$
is embeddable into $pMp\oplus(p^\perp M p^\perp)^{\op}$.
In particular, if $M$ moreover satisfies $p^\perp Mp^\perp\cong(p^\perp M p^\perp)^{\op}$
(e.g., $M=R^\omega$), then $\pi_\tau(A)''\hookrightarrow M$.
\end{thm}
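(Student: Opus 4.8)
The plan is to manufacture a trace-preserving \shom out of $N:=\pi_\tau(A)''$ by first building a unital completely positive map whose values on the $u_i$ are unitaries, and then upgrading it to a \shom through Choi's multiplicative domain (Theorem~\ref{thm:choi}). The Hilbert-space content of the hypothesis is extracted as follows. Since $\tau(u_i^*u_j)=\tau(v_i^*v_j)$, the densely defined assignment $\widehat{u_i}\mapsto\widehat{v_i}$ preserves the inner products $\ip{\hat y,\hat x}=\tau(x^*y)$ and hence extends to an isometry $W\colon L^2(A,\tau)\to L^2(M,\tau)$. Because $\{u_i\}$ is norm-dense in $A_\ru$ and the unitaries span $A$, the vectors $\widehat{u_i}$ are total in $L^2(A,\tau)$; approximating an arbitrary $a\in A_\ru$ by the $u_i$, and using weak$^{*}$-compactness of the unit ball of $M$ together with faithfulness of $\tau$, one checks that $W\hat a=\widehat{v_a}$ for a genuine unitary $v_a\in M$. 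As $\tau(v_i^*v_j)$ is unchanged under a global gauge $v_i\mapsto w v_i$, I may normalise so that $W\hat1=\hat1$. Thus $a\mapsto v_a$ preserves the $2$-norm and the sesquilinear form $\tau(\,\cdot^*\cdot\,)$ on $A_\ru$, but, crucially, it is \emph{not} multiplicative in general, which is exactly why one cannot hope to land in $M$ itself.

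The heart of the argument, following Kirchberg, is to convert this $L^2$-datum into an algebra-level map into $M\oplus M^{\op}$. Here I would bring in both the left regular representation of $M$ on $L^2(M,\tau)$ and the right one, the latter realised through the modular conjugation $J$ of $(M,\tau)$ via $JMJ=M'\cong M^{\op}$; it is precisely this right action that produces the opposite algebra. Using $W$ together with a Stinespring dilation (Theorem~\ref{thm:stinespring}) and Arveson extension (Theorem~\ref{thm:arveson}), I would build a completely positive map $\Phi\colon A\to M\oplus M^{\op}$ with $\Phi(1)=(p,(p^{\perp})^{\op})$ a projection and $\Phi(A)\subseteq pMp\oplus(p^{\perp}Mp^{\perp})^{\op}$, arranged so that the vector state of $\hat1$ makes $\Phi$ trace-preserving and each $\Phi(u_i)$ a unitary of the corner algebra $pMp\oplus(p^{\perp}Mp^{\perp})^{\op}$. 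The projection $p$ emerges as the spatial support governing how much of each $\Phi(u_i)$ is carried by the left ($M$) versus the right ($M^{\op}$) action, and the unitary $2\times2$ dilation $\left[\begin{smallmatrix} X & (1-XX^*)^{1/2}\\ (1-X^*X)^{1/2} & -X^*\end{smallmatrix}\right]$ from the proof of Theorem~\ref{thm:schmudgen}, whose lower-right corner $-X^*$ is an opposite-type term, is the natural device for turning the contractions extracted from $W$ into honest unitaries. This step — producing $\Phi$ with unitary images while keeping it trace-preserving and controlling the left/right split by a single projection — is the main obstacle.

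Once $\Phi$ is available the rest is formal. For each $i$ the element $\Phi(u_i)$ is a unitary of the corner algebra, so $\Phi(u_i^*u_i)=\Phi(1)=\Phi(u_i)^*\Phi(u_i)$ and likewise for $u_iu_i^*$; hence $u_i\in\md(\Phi)$ by Theorem~\ref{thm:choi}. Since $\md(\Phi)$ is a norm-closed unital $*$-subalgebra of $A$ containing the norm-dense family $\{u_i\}$ of unitaries, it contains all of $A_\ru$ and therefore equals $A$. Thus $\Phi$ is a trace-preserving \shom from $A$ into $pMp\oplus(p^{\perp}Mp^{\perp})^{\op}$, and as noted in Section~\ref{sec:cec} a trace-preserving \shom of a \spc into a finite von Neumann algebra extends to a trace-preserving isomorphism of $N=\pi_\tau(A)''$ onto the generated subalgebra; faithfulness of $\tau$ makes the composition with the inclusion injective, which is the desired embedding.

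Finally, if $p^{\perp}Mp^{\perp}\cong(p^{\perp}Mp^{\perp})^{\op}$, then $pMp\oplus(p^{\perp}Mp^{\perp})^{\op}\cong pMp\oplus p^{\perp}Mp^{\perp}$, which embeds trace-preservingly into $M$ as the block-diagonal corner $x\oplus y\mapsto pxp+p^{\perp}yp^{\perp}$, giving $\pi_\tau(A)''\hookrightarrow M$; for $M=R^\omega$ every corner $qR^\omega q$ satisfies $qR^\omega q\cong R^\omega\cong(R^\omega)^{\op}$, so the hypothesis of the last clause is automatic and one recovers $\pi_\tau(A)''\hookrightarrow R^\omega$.
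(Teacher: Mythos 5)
Your opening moves (the $L^2$-isometry $W$, the fact that it carries $A_\ru$ into $M_\ru$ because the unitary group of a finite von Neumann algebra is $\|\cdot\|_2$-closed, and the normalisation $W\hat1=\hat1$) agree with the paper, and your last paragraph on the ``in particular'' clause is fine. But the middle step --- the only nontrivial one --- is not carried out: you say yourself that producing the u.c.p.\ map $\Phi$ into $pMp\oplus(p^\perp Mp^\perp)^{\op}$ ``is the main obstacle,'' and the tools you propose would not produce it. Arveson extension yields a c.p.\ map with values in $\IB(L^2(M,\tau))$, with nothing forcing it into $M\oplus M^{\op}$, nothing making its values on the $u_i$ unitary, and nothing singling out a projection $p$; the $2\times2$ unitary dilation changes the target algebra and cannot be reconciled with landing in a fixed corner of $M$. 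So the projection $p$ and the opposite algebra, which are the whole content of the statement, never actually arise in your argument, and the concluding Choi multiplicative-domain step presupposes the complete positivity of a map you have not constructed.

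The missing idea is the Jordan morphism argument, which needs no dilation at all. By Russo--Dye the $L^2$-isometry $\varphi$ is a \emph{norm} contraction from $A$ into $M$, so $\theta(a):=\varphi(1)^*\varphi(a)$ is a unital contraction, hence positive; it preserves $\tau$ and is isometric for $\|\cdot\|_2$. For $a=a^*$ the Kadison--Schwarz inequality $\theta(a)^2\le\theta(a^2)$ combined with $\tau(\theta(a^2))=\tau(a^2)=\tau(\theta(a)^2)$ and the faithfulness of $\tau$ forces $\theta(a^2)=\theta(a)^2$: thus $\theta$ is a Jordan $*$-morphism (this is the ``multiplicative domain argument,'' Corollary 2.6 in \cite{qwep}). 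The classical St{\o}rmer--Kadison structure theorem for Jordan $*$-morphisms into von Neumann algebras then gives a central projection $p$ in $\theta(A)''$ such that $p\theta$ is a \shom and $p^\perp\theta$ is an anti-\shom, i.e.\ a \shom into $(p^\perp Mp^\perp)^{\op}$; since $\theta$ preserves $\tau$, this extends to the desired embedding of $\pi_\tau(A)''$. That is where $p$ and the opposite algebra come from, and it is the step your proposal leaves open.
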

\begin{proof}
We reproduce Kirchberg's proof (\cite{kirchberg}) here.
The map $u_i\mapsto v_i$ extends to a linear isometry $\p$
from $L^2(A,\tau)$ into $L^2(M,\tau)$.
Since the set of unitary elements is closed in $L^2(M,\tau)$,
the map $\p$ sends unitary elements to unitary elements by density and continuity.
By the Russo--Dye theorem, $\p$ is a contraction from $A$ into $M$. Thus the unital map
$\theta$ defined by $\theta(a)=\p(1)^*\p(a)$ is a positive contraction from
$A$ into $M$, which sends unitary elements to unitary elements and preserves the tracial state.
By a multiplicative domain argument (Corollary 2.6 in \cite{qwep}),
the map $\theta$ is a Jordan morphism. Thus, there is a central
projection $p$ in $\theta(A)''$ such that $p\theta$ (resp.\ $p^\perp\theta$)
is a \shom (resp.\ an opposite \shom).
\end{proof}

Let $\tau$ be a tracial state on a \spc $\cA$.
Then, besides the GNS representation $\pi_\tau$ of $\cA$ on $L^2(\cA,\tau)$,
there is a representation $\pi_\tau^\op$ of $\cA^{\op}$ on $L^2(\cA,\tau)$,
given by
$\pi_\tau^\op(b^\op)\hat{x}=\widehat{xb}$. Since $\tau$ is tracial, one has
\[
\ip{\pi_\tau^\op(b^\op)\hat{x},\hat{x}}=\tau(x^*xb)=\ip{\pi_\tau(b)\hat{x}^*,\hat{x}^*}\geq0
\]
if $b^\op\geq0$, which means that $\pi_\tau^\op$
is a positive $*$-representation on $L^2(\cA,\tau)$.

Since $\pi_\tau(\cA)$ and $\pi_\tau^\op(\cA^\op)$ commute, one obtains a positive \srep
\[
\pi_\tau\times\pi_\tau^\op\colon\cA\otimes\cA^\op\to\IB(L^2(\cA,\tau)),\
(\pi_\tau\times\pi_\tau^\op)(a\otimes b^\op)\hat{x}=\widehat{axb}.
\]
In particular,
\[
\sum a_i\otimes b_i^{\op} \mapsto
\ip{(\pi_\tau\times\pi_\tau^\op)(\sum a_i\otimes b_i^\op)\hat{1},\hat{1}} = \tau(\sum a_i b_i )
\]
is a state on $\cA\otimes^{\max}\cA^{\op}$.

To prove the equivalence between CEC and KC, Kirchberg (\cite{kirchberg}) has shown
that all von Neumann algebras are QWEP if it is the case for finite von Neumann algebras.
We take a different route and prove the following alternative.

\begin{thm}\label{thm:kirchbergm}
There is a tracial state $\tau$ on $\mathrm{C^*}\IF_d$ such that
$\pi_\tau\times\pi_\tau^\op$ is faithful on the \ca
$\mathrm{C^*}\IF_d\otimes_{\max}\mathrm{C^*}\IF_d$.
This means that if $\tau(\sum_i a_ixb_ix^*)\geq0$ for every
$x\in\IC[\IF_d]$ and $\tau\in T(\IC[\IF_d])$,
then $\sum_i a_i\otimes \check{b}_i\in\arch(\IC[\IF_d\times\IF_d]_+)$.
\end{thm}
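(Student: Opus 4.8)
The plan is to recast the assertion as a positivity-detection statement for traces and then glue countably many traces into one. Write $A=\mathrm{C^*}\IF_d$ and $\cB=A\otimes_{\max}A^\op$, and fix $z=\sum_i a_i\otimes b_i^\op$. For any $\tau\in T(A)$ and $x\in\IC[\IF_d]$, the definition of $\pi_\tau\times\pi_\tau^\op$ gives
\[
\ip{(\pi_\tau\times\pi_\tau^\op)(z)\hat x,\hat x}=\sum_i\tau(x^*a_ixb_i)=\tau\Big(\sum_i a_ixb_ix^*\Big),
\]
the last equality by traciality. Since the vectors $\hat x$ are dense in $L^2(A,\tau)$, the hypothesis of the displayed reformulation says exactly that $(\pi_\tau\times\pi_\tau^\op)(z)\ge0$ for every $\tau\in T(A)$. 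Under the identification $\IC[\IF_d]\otimes^{\max}\IC[\IF_d]^\op\cong\IC[\IF_d\times\IF_d]$, $a\otimes b^\op\mapsto a\otimes\check b$, Theorem~\ref{thm:fundamental} identifies $\arch(\IC[\IF_d\times\IF_d]_+)$ with the positive cone of $\cB$. Hence the whole statement is equivalent to: the family $\{\pi_\tau\times\pi_\tau^\op:\tau\in T(A)\}$ detects positivity in $\cB$, and, since an injective \shom of \ca{}s is an isometric order embedding, this amounts to joint faithfulness of that family together with the existence of a single faithful member. I would establish joint faithfulness and then exhibit one faithful $\tau$.

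First I would reduce a single faithful trace to a countable jointly faithful family. For each $\sigma$ the vector $\hat1$ is cyclic for $\pi_\sigma\times\pi_\sigma^\op$, already because $\pi_\sigma(A)\hat1=\widehat A$ is dense; thus $\pi_\sigma\times\pi_\sigma^\op$ is the GNS representation of the state $\omega_\sigma(z)=\sigma(\sum_i a_ib_i)$. Given $(\sigma_n)_{n\ge1}$ with $\bigoplus_n(\pi_{\sigma_n}\times\pi_{\sigma_n}^\op)$ faithful, set $\tau=\sum_n2^{-n}\sigma_n$, a tracial state, so that $\omega_\tau=\sum_n2^{-n}\omega_{\sigma_n}\ge2^{-n}\omega_{\sigma_n}$ for each $n$. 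By the domination (Radon--Nikodym) principle for states, $\pi_{\omega_{\sigma_n}}$ is a subrepresentation of $\pi_{\omega_\tau}$, whence $\ker(\pi_\tau\times\pi_\tau^\op)\subset\ker(\pi_{\sigma_n}\times\pi_{\sigma_n}^\op)$ for all $n$, and therefore $\ker(\pi_\tau\times\pi_\tau^\op)=\{0\}$. A countable jointly faithful subfamily exists because $\cB$ is separable.

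The crux is joint faithfulness: a self-adjoint $z\in\cB$ that is positive in every trace representation is positive in $\cB$. Via the GNS construction a failure of positivity is witnessed by a commuting pair $\pi\colon A\to\IB(H)$, $\pi'\colon A^\op\to\IB(H)$ with a unit vector $\zeta$ — that is, by an $A$-$A$-correspondence — such that $\ip{(\pi\times\pi')(z)\zeta,\zeta}<0$, and one must replace it by a trace representation with a vector keeping the sign negative. This is the step I expect to be hard. The difficulty is that $\pi(A)''$ may be of type III, so no trace can simply be restricted from it; moreover one cannot match the joint moments of an arbitrary commuting pair by finite-dimensional commuting unitaries (that would force residual finite dimensionality of $\cB$, which is not available), nor in general by a single $\mathrm{II}_1$ trace. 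The trace must be manufactured.

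To manufacture it I would exploit that $A=\mathrm{C^*}\IF_d$ has the local lifting property, available here through Theorem~\ref{thm:kirchberg} in the form $A\otimes_{\max}\IB(H)=A\otimes_{\min}\IB(H)$, which is precisely what permits commuting-pair (max-tensor) data to be compared with data assembled from finite-dimensional pieces. A correspondence over $\IF_d$ is nothing but $2d$ unitaries whose two $d$-tuples commute and satisfy no further relation, so I would dilate the u.c.p.\ map $A\to\IB(H)$ encoding $\zeta$ by Stinespring, truncate to finite-dimensional corners, reassemble the corners into genuine tracial states using matrix traces together with the free-product extension of u.c.p.\ maps (Theorem~\ref{thm:boca}), and pass to a limit along an ultrafilter to obtain a $\mathrm{II}_1$ trace. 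The point to control, and the real obstacle, is to perform this truncation-and-reassembly without destroying the inequality $\ip{(\pi\times\pi')(z)\zeta,\zeta}<0$; this sign-preserving construction of a finite trace out of an arbitrary commuting representation is the analytic heart of the theorem, whereas the two preceding reductions are formal.
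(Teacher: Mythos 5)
Your formal reductions are correct and coincide with the paper's: the identity $\ip{(\pi_\tau\times\pi_\tau^\op)(z)\hat x,\hat x}=\tau(\sum_i a_i x b_i x^*)$, the identification of the second sentence of the theorem with faithfulness via Theorem~\ref{thm:fundamental}, and the passage from a countable jointly faithful family $(\sigma_n)$ to the single trace $\tau=\sum_n 2^{-n}\sigma_n$ by state domination are all fine. But the theorem is not proved, because the one step you explicitly flag as ``the analytic heart'' --- replacing an arbitrary commuting pair $(\pi,\pi')$ with $\ip{(\pi\times\pi')(z)\zeta,\zeta}<0$ by a representation into a \emph{finite} von Neumann algebra keeping the sign negative --- is exactly the content of the theorem, and the route you sketch for it does not work. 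Truncating to finite-dimensional corners and reassembling cannot succeed in general: as you yourself observe, approximating the joint moments of an arbitrary commuting pair by finite-dimensional commuting data would establish residual finite dimensionality of $\mathrm{C^*}\IF_d\otimes_{\max}\mathrm{C^*}\IF_d$, i.e.\ Kirchberg's Conjecture, so the argument would be circular. Moreover Theorem~\ref{thm:boca} addresses the \emph{free product} structure, whereas the obstruction here lives entirely in the commuting (tensor) structure, and the identity $A\otimes_{\max}\IB(H)=A\otimes_{\min}\IB(H)$ gives no mechanism for producing traces.

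The paper's actual device is von Neumann algebraic and never passes through finite dimensions. One puts the faithful commuting pair in standard form, so that $\pi(A)\subset M$ and $\pi'=\pi^\op$ maps into $M'\cong M^\op$ acting on $L^2(M)$; the possibly type III obstruction is removed by Takesaki's structure theorem: the crossed product $M\rtimes\IR$ by the modular flow is \emph{semifinite}, and $\pi\times\pi^\op$ is weakly contained in the corresponding pair over $M\rtimes\IR$. One may therefore assume $M$ carries a faithful normal tracial weight $\Tr$ and acts on $L^2(M,\Tr)$. Vectors supported under finite-trace projections $p$ are dense there, and for such a vector one compresses the generating unitaries to the finite corner $pMp$ and restores unitarity by the $2\times2$ dilation
$\left[\begin{smallmatrix} X & (1-XX^*)^{1/2}\\ (1-X^*X)^{1/2} & -X^*\end{smallmatrix}\right]$
inside $\IM_2(pMp)$ --- the same Choi trick as in the proof of Theorem~\ref{thm:schmudgen}, but landing in a finite von Neumann algebra rather than a matrix algebra. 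This preserves $\ip{(\pi\times\pi^\op)(z)\zeta,\zeta}$ for the finitely supported vector $\zeta$ and yields the countable jointly faithful family $(\sigma_k,\tau_k)$ to which your $\sum_k 2^{-k}\tau_k$ step then applies. Without this (or some substitute for the type III reduction and the sign-preserving compression), your proposal establishes only the easy bookkeeping around the theorem.
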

\begin{proof}
First, take a faithful \srep $\pi\times\pi'$ of
$\mathrm{C^*}\IF_d\otimes_{\max}(\mathrm{C^*}\IF_d)^\op$ on $\Hil$.
By inflating it if necessary, one may find a von Neumann algebra $M$
in its standard form (see Section IX.1 in \cite{takesakiII})
such that $\pi(\mathrm{C^*}\IF_d)\subset M$ and $\pi'((\mathrm{C^*}\IF_d)^\op)\subset M'\cong M^\op$.
Further, by replacing $\pi$ with
$\pi\oplus(\pi')^\op\colon \mathrm{C^*}\IF_d\to M\oplus M\subset\IM_2(M)$,
and $M$ with $\IM_2(M)$, one may assume $\pi'=\pi^{\op}$.
There is an $\IR$-action on $M$ such that the crossed product
von Neumann algebra $M\rtimes\IR$ is semi-finite (Theorem XII.1.1 in \cite{takesakiII}).
We think $M\rtimes\IR$ acts on $\Hil\otimes L^2(\IR)$ by
$a\otimes 1$ for $a\in M$ and $u_t\otimes\lambda_t$ for $t\in\IR$, where $u_t$
is the implementing unitary group.
Hence $\mathrm{C^*}\IF_d\hookrightarrow M\rtimes\IR$ is given by $\tilde{\pi}(a)=\pi(a)\otimes1$.
Then under the identification of $\Hil\otimes L^2(\IR)$ with $L^2(\IR,\Hil)$,
the embedding $\tilde{\pi}^\op\colon (\mathrm{C^*}\IF_d)^\op\hookrightarrow (M\rtimes\IR)'$ is given by
$(\tilde{\pi}^{\op}(b^{\op})\xi)(t)=u_t\pi^{\op}(b^\op)u_t^*\xi(t)$.
Since $\pi\times\pi^\op$ is weakly contained in $\tilde{\pi}\times\tilde{\pi}^\op$,
one may assume from the beginning that $M$ is a semi-finite von Neumann algebra with
a faithful normal tracial weight $\Tr$, and the \srep $\pi\times\pi^\op$ of
$\mathrm{C^*}\IF_d\otimes_{\max}(\mathrm{C^*}\IF_d)^\op$ on $L^2(M,\Tr)$ is faithful.
Since vectors with finite supports are dense in $L^2(M,\Tr)$, one can use Choi's trick
(see the proof of Theorem~\ref{thm:schmudgen}) and find a countable family of
\srep{}s $\sigma_k$ from $\mathrm{C^*}\IF_d$ into finite von Neumann algebras
$(M_k,\tau_k)$ such that $\bigoplus\sigma_k\times\sigma^\op_k$ is faithful.
By considering $\tau=\sum2^{-k}\tau_k\in T(\mathrm{C^*}\IF_d)$, we are done.
\end{proof}

We denote by $\IF_\infty=\langle s_1,s_2,\ldots\rangle$ the free group on countably many generators.
We identify the elements $s_i$ with the corresponding unitary elements in $\mathrm{C^*}\IF_\infty$.
\begin{thm}\label{thm:cecmin}
Let $A$ be a \ca having a tracial state $\tau$ and a dense sequence $u_1,u_2,\ldots$ of
unitary elements in $A$.
Let $\sigma\colon \mathrm{C^*}\IF_\infty\to A$, $\sigma(s_i)=u_i$ be the corresponding \shom.
Then, the finite von Neumann algebra $\pi_\tau(A)''$ satisfies CEC if and only if
the linear functional defined by
\[
s_i\otimes s_j\mapsto\tau(u_iu_j^*)
\ \mbox{ on }\
\lh\{ s_i\otimes s_j\}\subset \mathrm{C^*}\IF_\infty\otimes_{\min}\mathrm{C^*}\IF_\infty
\]
is contractive.
\end{thm}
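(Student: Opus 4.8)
The plan is to recognise the functional as the restriction of a trace-state and to play the two implications off against the unitary reformulation of CEC. Write $t=\tau\circ\sigma$ for the tracial state pulled back to $\mathrm{C}^*\IF_\infty$, so that $M=\pi_\tau(A)''=\pi_t(\mathrm{C}^*\IF_\infty)''$ and $\phi(s_i\otimes s_j)=t(s_is_j^{-1})=\ip{\hat u_i,\hat u_j}$. Through $(\mathrm{C}^*\IF_\infty)^\op\cong\mathrm{C}^*\IF_\infty$, the map $\phi$ is the restriction to $\lh\{s_i\otimes s_j\}$ of the state $\sum_k a_k\otimes b_k\mapsto t(\sum_k a_k\check{b}_k)$ associated with $\pi_t\times\pi_t^\op$; hence $\phi$ is automatically $\max$-contractive and the only content is the passage to the smaller $\min$-norm. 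Using Theorem~\ref{thm:cecequiv}\,(\ref{cond:acecu}) I would restate ``$M$ satisfies CEC'' as the assertion that the Gram data $G_{ij}=\tau(u_iu_j^*)$ are matricially approximable: for each finite index set and each $\e>0$ there are unitaries $W_i\in\IM_N(\IC)$ with $|\tr(W_iW_j^*)-G_{ij}|<\e$.

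For the implication CEC $\Rightarrow$ contractivity I would feed these matrix models through the maximally entangled vector. Given unitaries $W_1,\dots,W_n\in\IM_N(\IC)$, let $\pi$ be the representation of $\mathrm{C}^*\IF_\infty$ with $\pi(s_i)=W_i$ and put $\Omega=N^{-1/2}\sum_k\delta_k\otimes\delta_k$; the elementary identity $\ip{(a\otimes\bar b)\Omega,\Omega}=\tr(ab^*)$ shows that $s_i\otimes s_j\mapsto\tr(W_iW_j^*)$ equals $\ip{(\pi\otimes\bar\pi)(\,\cdot\,)\Omega,\Omega}$, a state composed with a \shom, hence a $\min$-contraction. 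Since every element of $\lh\{s_i\otimes s_j\}$ involves finitely many generators, matricial approximability of $G$ exhibits $\phi$ as a pointwise limit of such $\min$-contractions, so $\phi$ is $\min$-contractive.

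The substantial direction is the converse. Assuming $\phi$ $\min$-contractive, I would first promote it to a state: extend $\phi$ to $\tilde\phi$ on $\mathrm{C}^*\IF_\infty\otimes_{\min}\mathrm{C}^*\IF_\infty$ with $\|\tilde\phi\|\le1$ by Hahn--Banach, and (after arranging $u_1=1$, so that $w=s_1\otimes s_1$ is a unitary with $\phi(w)=G_{11}=1$) observe that $\rho(x):=\tilde\phi(wx)$ satisfies $\rho(1)=1=\|\rho\|$ and is therefore a state on the $\min$-tensor product. In its GNS representation $\pi_\rho=\lambda\times\lambda'$ with cyclic vector $\xi$, each diagonal unitary $\lambda(t_i)\lambda'(t_i)$ (where $t_i=s_1^{-1}s_i$) fixes $\xi$, because $\rho(t_i\otimes t_i)=\phi(s_i\otimes s_i)=1$; equality in Cauchy--Schwarz then forces $\lambda'(t_i)\xi=\lambda(t_i)^{-1}\xi$, so that $\xi$ is already cyclic for $\lambda$ and $\lambda'$ acts as right multiplication. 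Consequently the vector state $\psi=\ip{\lambda(\,\cdot\,)\xi,\xi}$ is a \emph{trace}, $\rho=\pi_\psi\times\pi_\psi^\op$, and $\psi(t_it_j^{-1})=G_{ij}$.

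Finally, $\rho$ being a state on the $\min$-tensor product says exactly that the canonical trace-state $\pi_\psi\times\pi_\psi^\op$ of $\psi$ is $\min$-continuous, i.e.\ that $\psi$ is an \emph{amenable} trace; the dictionary of amenable traces then yields that $\psi$ is matricially approximable, so $\pi_\psi(\mathrm{C}^*\IF_\infty)''$ embeds into $R^\omega$ and carries unitaries $v_i=\lambda(t_i)$ with $\tau_\psi(v_i^*v_j)=\overline{G_{ij}}=\tau(u_i^*u_j)$. Matching these Gram data with the $u_i$ and invoking Theorem~\ref{thm:kuq} (with $M=R^\omega$, which is its own opposite) gives $\pi_\tau(A)''\hookrightarrow R^\omega$, i.e.\ CEC. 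I expect the crux to be precisely this last equivalence, that $\min$-continuity of the standard-form (trace) representation is the same as matricial approximability of the trace. The trace-forcing step producing $\psi$ out of a bare $\min$-contractive functional is clean, but converting the resulting $\min$-state into genuinely \emph{tracial} (maximally entangled) matrix models cannot be done by a naive Hahn--Banach separation: the $\min$-norm of $\sum\alpha_{ij}s_i\otimes s_j$ is an operator-norm supremum over independent representations, whereas $\sum\alpha_{ij}\tr(W_iW_j^*)$ records only the maximally entangled expectation. Bridging this gap is exactly where the amenable-trace theory (equivalently, the input behind Kirchberg's equivalence) is indispensable.
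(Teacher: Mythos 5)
Your argument is correct and, at bottom, runs on the same engine as the paper's: both directions pivot on Theorem~\ref{thm:kuq} together with the hypertrace machinery of Chapter~6 of \cite{bo}. The differences are organizational but worth recording. In the direction CEC $\Rightarrow$ contractivity you use genuinely finite-dimensional models from Theorem~\ref{thm:cecequiv} and the maximally entangled vector, so that min-contractivity is immediate from $\IM_N(\IC)\otimes_{\min}\IM_N(\IC)=\IM_N(\IC)\otimes\IM_N(\IC)$; the paper instead lifts the embedding into $R^\omega$ to \shom{}s $\sigma_n\colon\mathrm{C}^*\IF_\infty\to R$ and uses that $\sigma_n\times\sigma_n^{\op}$ is min-continuous on $\IB(L^2(R,\tau))$, which invokes semidiscreteness of $R$; your version is the more elementary of the two. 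In the converse you first force a trace out of the min-state by the Cauchy--Schwarz rigidity $\rho(t_i\otimes t_i)=1\Rightarrow\lambda(t_i)\lambda'(t_i)\xi=\xi$ and then cite the equivalence ``amenable trace $=$ matricially approximable trace'' as a black box; the paper goes straight to the Hilbert--Schmidt picture, extracts an approximately invariant density matrix $h=x^*x$ by a Powers--St{\o}rmer estimate, and feeds it to Lemma~6.2.5 of \cite{bo} --- which is precisely the technical content of the dictionary you invoke, so you are citing one level of packaging higher but resting on the same results. You also correctly locate the crux: naive Hahn--Banach cannot bridge the gap between a min-contractive functional and maximally entangled matrix models. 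One small imprecision: the elements $t_i=s_1^{-1}s_i$ generate a proper (free, infinite-rank) subgroup $H<\IF_\infty$ not containing $s_1$, so $\xi$ is cyclic for $\lambda$ only on $\overline{\lambda(\mathrm{C}^*(H))\xi}$ and $\psi$ is tracial only on $\mathrm{C}^*(H)$; this is harmless, since all the Gram data $G_{ij}=\rho(t_i\otimes t_j)=\psi(t_it_j^{-1})$ lives in $\mathrm{C}^*(H)\cong\mathrm{C}^*\IF_\infty$, but the phrase ``$\xi$ is already cyclic for $\lambda$'' should be restricted accordingly.
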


In the operator space terminology, $\lh\{s_1,s_2,\ldots\}$ is denoted by $\ell_1$, and
the above theorem in particular says that KC holds true if
the formal identity $\ell_1\otimes_{\min}\ell_1\hookrightarrow
 \mathrm{C^*}\IF_\infty\otimes_{\max}\mathrm{C^*}\IF_\infty$
is contractive. It is known that this map is bounded (by $K^{\IC}_G<1.41$).
In case this map is moreover completely contractive, the theorem easily
follows from Pisier's trick. See Section 12 in \cite{pisier:g} for more information.

\begin{proof}
We first prove the `if' part. For notational simplicity,
denote $C=\mathrm{C^*}\IF_\infty$ and let it act on a Hilbert space $\Hil$
with infinite multiplicity.
We note that $C$ is $*$-isomorphic to its complex conjugate $\bar{C}$ acting
on the conjugate Hilbert space $\bar{\Hil}$ by $\bar{s}$, $s\in\IF_\infty$.
We use the standard identification of the Hilbert space $\Hil\otimes\bar{\Hil}$
with the space $S_2(\Hil)$ of Hilbert--Schmidt class operators on $\Hil$.
Then, $a\otimes\bar{b}\in C\otimes\bar{C}$ acts on $S_2(\Hil)$ by $(a\otimes\bar{b})x=axb^*$.
By considering the complete isometries
$s_i\mapsto s_1^{-1}s_i$, one may assume that $s_1=1$
and $\IF_\infty=\langle s_2,s_3,\ldots\rangle$ in the above formula.
Then, since every unital contraction is positive on a $\mathrm{C}^*$-algebra,
the linear functional $s_i\otimes\bar{s}_j\mapsto \tau(u_iu_j^*)$ extends to
a state $f$ on $C\otimes_{\min}\bar{C}$.
Let $n\in\IN$ be given and $\e=1/n$.
Then, there is $x\in S_2(\Hil)$ of norm $1$ such that
\[
|f(s_i\otimes\bar{s}_j) - \Tr(s_ixs_j^*x^*)|<\e^2/8
\]
for $i,j\le n$. Since $\|s_ix - xs_i \|_2^2=2(1-\Re\Tr(s_ixs_i^*x^*))<(\e/2)^2$,
denoting $h=x^*x$, one has
\[
|\tau(u_iu_j^*) - \Tr(h s_is_j^*)|<\e
\ \mbox{ and }\
\| s_ihs_i^* - h \|_1<\e
\]
for $i,j\le n$. Here $\|\,\cdot\,\|_1$ denotes the trace norm.
By approximation, one may assume that $h$ is of finite-rank and has rational eigenvalues.
Then, Lemma 6.2.5 in \cite{bo} implies that there is a u.c.p.\ map
$\p_n$ from $C$ into the hyperfinite $\mathrm{II}_1$-factor $R$
such that $\tau(\p_n(a))=\Tr(ha)$ for $a\in C$ and
$|1 - \tau(\p_n(s_i)\p_n(s_i)^*)|\le 2\e^{1/2}$ for $i\le n$.
See also Lemma 6.2.6 in \cite{bo}.
It follows that, for each $i$, the sequence $(\p_n(s_i))_{n=1}^\infty$ gives rise to
a unitary element $v_i$ in $R^\omega$ such that
\[
\tau(u_iu_j^*) = \tau(v_iv_j^*)
\]
for all $i,j$. By Theorem~\ref{thm:kuq}, $\pi_\tau(A)''\hookrightarrow R^\omega$.

For the converse implication, let $\pi_\tau(A)''\hookrightarrow R^\omega$
and $(u_i(n))_{n=1}^\infty\in\prod R$ be liftings of $u_i$ in $R^\omega$.
Then, for the corresponding \shom $\sigma_n\colon C\to R$, $s_i\mapsto u_i(n)$,
the \shom $\sigma_n\times\sigma_n^{\op}$ from $C\otimes C^{\op}$ to $\IB(L^2(R,\tau))$
is continuous with respect to the minimal tensor product.
Hence the map
\[
s_i\otimes s_j\mapsto\lim_n\ip{(\sigma_n\times\sigma_n^{\op})(s_i\otimes s_j)\hat{1},\hat{1}}
=\lim_n\tau(u_i(n)u_j(n)^*)=\tau(u_iu_j^*)
\]
is contractive with respect to the minimal tensor norm.
\end{proof}
\begin{cor}[\cite{kirchberg}]
CEC is equivalent to KC that
$\mathrm{C^*}\IF_d\otimes_{\max}\mathrm{C^*}\IF_d
=\mathrm{C^*}\IF_d\otimes_{\min}\mathrm{C^*}\IF_d$
holds for some/all $d\geq2$.
\end{cor}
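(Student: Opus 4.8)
The plan is to prove the two implications separately, drawing on Theorem~\ref{thm:cecmin} for $\text{KC}\Rightarrow\text{CEC}$ and on Theorem~\ref{thm:kirchbergm} for the converse. Throughout I would use the standard fact that KC for a single $d\ge2$ is equivalent to KC for every $d\ge2$ and for $\IF_\infty$: the inclusion $\mathrm{C^*}\IF_\infty\subset\mathrm{C^*}\IF_2$ (and conversely) admits a u.c.p.\ conditional expectation, and tensoring two such expectations is u.c.p.\ for $\otimes_{\max}$, so $\mathrm{C^*}H\otimes_{\max}\mathrm{C^*}H$ embeds isometrically into $\mathrm{C^*}G\otimes_{\max}\mathrm{C^*}G$ while $\otimes_{\min}$ always restricts; hence the collapse of the two norms transfers between the groups. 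This lets me pass freely between $\IF_d$ and $\IF_\infty$.

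First, for $\text{KC}\Rightarrow\text{CEC}$, let $(M,\tau)$ be an arbitrary finite von Neumann algebra with separable predual. I would choose a weakly dense separable unital $*$-subalgebra $A\subset M$ on which $\tau$ restricts to a tracial state, fix a dense sequence $u_1,u_2,\ldots$ of unitaries in $A$, and set $\sigma\colon\mathrm{C^*}\IF_\infty\to A$, $s_i\mapsto u_i$, so that $\pi_\tau(A)''=M$. The key observation is that the functional $s_i\otimes s_j\mapsto\tau(u_iu_j^*)$ on $\lh\{s_i\otimes s_j\}$ is the restriction to that subspace of the state $a\otimes b^\op\mapsto\tau(ab)$ on $\mathrm{C^*}\IF_\infty\otimes_{\max}\mathrm{C^*}\IF_\infty^\op$ coming from $\pi_\tau\times\pi_\tau^\op$, transported through $\mathrm{C^*}\IF_\infty^\op\cong\mathrm{C^*}\IF_\infty$ via $s^\op\leftrightarrow s^{-1}$; it is therefore contractive for $\|\cdot\|_{\max}$. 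Assuming KC, $\|\cdot\|_{\max}=\|\cdot\|_{\min}$, so the functional is $\min$-contractive, and Theorem~\ref{thm:cecmin} yields that $M$ satisfies CEC. As $(M,\tau)$ was arbitrary, CEC holds.

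For $\text{CEC}\Rightarrow\text{KC}$, I would fix the tracial state $\tau$ on $\mathrm{C^*}\IF_d$ supplied by Theorem~\ref{thm:kirchbergm}, so that $\pi_\tau\times\pi_\tau^\op$ is faithful, hence isometric, on $\mathrm{C^*}\IF_d\otimes_{\max}\mathrm{C^*}\IF_d$. It then suffices to show the associated state $\omega(a\otimes b^\op)=\tau(ab)$ is continuous for $\|\cdot\|_{\min}$. Writing $J=\ker(\mathrm{C^*}\IF_d\otimes_{\max}\mathrm{C^*}\IF_d\to\mathrm{C^*}\IF_d\otimes_{\min}\mathrm{C^*}\IF_d)$, a short GNS computation shows $\omega|_J=0$ forces $(\pi_\tau\times\pi_\tau^\op)(J)=0$: for $x\in J$ one has $x^*x\in J$, so $\|(\pi_\tau\times\pi_\tau^\op)(x)\hat1\|^2=\omega(x^*x)=0$, and cyclicity of $\hat1$ finishes it; faithfulness then gives $J=0$, i.e.\ KC. To prove $\omega$ is $\min$-continuous I would invoke CEC through its matrix form: since $M=\pi_\tau(\mathrm{C^*}\IF_d)''\hookrightarrow R^\omega$, Theorem~\ref{thm:cecequiv} approximates the unitary moments of the generators by matrix moments, yielding unital $*$-homomorphisms $\sigma_k\colon\mathrm{C^*}\IF_d\to\IM_{N_k}(\ik)$ with $\tr(\sigma_k(w))\to\tau(w)$ for every word $w$. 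As $\IM_{N_k}(\ik)$ is finite-dimensional, each $\sigma_k\times\sigma_k^\op$ is $\min$-continuous, so the vector states $\omega_k=\langle(\sigma_k\times\sigma_k^\op)(\,\cdot\,)\hat1,\hat1\rangle$ are $\min$-contractive; since $\omega_k\to\omega$ pointwise on the dense $*$-subalgebra $\ik[\IF_d]\odot\ik[\IF_d]^\op$ and all have norm one, the weak$^*$-limit $\omega$ is $\min$-continuous as well.

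The routine parts are the opposite-algebra bookkeeping and the finite-dimensional $\min$-continuity of $\sigma_k\times\sigma_k^\op$. The main obstacle is the final step of $\text{CEC}\Rightarrow\text{KC}$: one must upgrade the finite-dimensional approximation furnished by CEC into continuity of the full state $\omega$ on the entire minimal tensor product, not merely on the small subspace $\lh\{s_i\otimes s_j\}$ that Theorem~\ref{thm:cecmin} controls, and only then can the faithfulness from Theorem~\ref{thm:kirchbergm} be leveraged, through the ideal-GNS argument, to collapse the max norm onto the min norm. Care is needed to ensure the $\sigma_k$ are genuine $*$-homomorphisms with unitary images, so that $\sigma_k\times\sigma_k^\op$ is a $\min$-bounded $*$-representation, and that moment convergence indeed delivers weak$^*$-convergence of $\omega_k$ to $\omega$ on the dense subalgebra.
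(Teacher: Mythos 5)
Your proof is correct and follows essentially the same route as the paper: both implications rest on Theorem~\ref{thm:kirchbergm} and Theorem~\ref{thm:cecmin} exactly as in the text, together with the standard transfer of KC between $\IF_d$ and $\IF_\infty$. The only (harmless) variation is in the direction CEC$\Rightarrow$KC, where the paper lifts $\pi_\tau$ to a sequence of representations into $\prod R$ and uses hyperfiniteness of $R$ to make the faithful direct sum $\min$-continuous, whereas you push down to matrix algebras and verify $\min$-continuity of the single state $\omega(a\otimes b^{\op})=\tau(ab)$, recovering the conclusion from faithfulness of $\pi_\tau\times\pi_\tau^{\op}$ via the GNS/ideal argument.
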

\begin{proof}
Since the free groups $\IF_d$'s are embedded into each other (for $d\geq2$),
KC does not depends on $d\geq2$.
By Theorem~\ref{thm:kirchbergm}, there is an embedding $\pi$
of $\mathrm{C^*}\IF_d$ into a finite von Neumann algebra $M$ with separable predual
(which may be assumed to be a factor by considering a free product)
such that $\pi\times\pi^\op$ is faithful on $\mathrm{C^*}\IF_d\otimes_{\max}\mathrm{C^*}\IF_d$.
If $M$ is embeddable into $R^\omega$, then one can lift the \srep $\pi$ into $R^\omega$ to
$(\pi_n)_{n=1}^\infty$ into $\prod R$ and sees that $\bigoplus\pi_n\times\pi_n^\op$
is faithful. Since $R$ is hyperfinite, $\pi_n\times\pi_n^\op$ factors through
$\mathrm{C^*}\IF_d\otimes_{\min}\mathrm{C^*}\IF_d$ and KC follows.
The converse implication follows from Theorem~\ref{thm:cecmin}.
\end{proof}
We remark that the free group $\IF_d$ in KC
can be replaced with any other single
group which contains $\IF_2$ and whose full group \ca has the so-called local lifting property.
In particular, KC is equivalent to the same statement but
$\IF_d$ is replaced with a ``nontrivial'' free product of amenable groups.
See \cite{bo,kirchberg,qwep} for the proof of these facts and more information.
\subsection{Quasi-diagonality}\label{sec:qd}
In this section, we prove a weaker version of Kirchberg's Conjecture.
We still assume $\ik=\IC$.
Since $\mathrm{C^*}\IF_d$ is RFD, so is the minimal tensor product
$\mathrm{C^*}\IF_d\otimes_{\min}\mathrm{C^*}\IF_d$.
On the other hand, since any finite-dimensional \srep of
the maximal tensor product factors through the minimal tensor product,
KC is equivalent to the assertion that
$\mathrm{C^*}(\IF_d\times\IF_d)$ (which is canonically isomorphic
to $\mathrm{C^*}\IF_d\otimes_{\max}\mathrm{C^*}\IF_d$) is RFD.
By \pstz (Corollary~\ref{cor:poss}), one obtains the following.

\begin{cor}
KC is equivalent to the following statement.
If $f\in\ik[\IF_d\times\IF_d]_\rh$ is such that $\pi(f)\geq0$ for all
finite(-dimensional) unitary representations $\pi$ of $\IF_d\times\IF_d$,
then $f\in\arch(\ik[\IF_d\times\IF_d]_+)$.
\end{cor}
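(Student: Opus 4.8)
The plan is to route everything through the second bullet of Corollary~\ref{cor:poss}, applied to the group $\G=\IF_d\times\IF_d$. That bullet already asserts that $\mathrm{C^*}(\IF_d\times\IF_d)$ is RFD if and only if the displayed Positivstellensatz holds, so the only substantive thing to supply is the identification of KC with the residual finite dimensionality of $\mathrm{C^*}(\IF_d\times\IF_d)$, which is the content of the paragraph preceding this corollary. I would therefore organize the proof in two moves: first establish KC $\Leftrightarrow$ ($\mathrm{C^*}(\IF_d\times\IF_d)$ is RFD), then quote Corollary~\ref{cor:poss}.

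For the preparatory step I would invoke the canonical isomorphism $\mathrm{C^*}(\IF_d\times\IF_d)\cong\mathrm{C^*}\IF_d\otimes_{\max}\mathrm{C^*}\IF_d$, which holds by the universal property, since a unitary representation of $\IF_d\times\IF_d$ is exactly a pair of commuting unitary representations of $\IF_d$, i.e.\ a representation of the maximal tensor product. Next I would record that $\mathrm{C^*}\IF_d\otimes_{\min}\mathrm{C^*}\IF_d$ is RFD: because $\mathrm{C^*}\IF_d$ is RFD by Choi's theorem (Theorem~\ref{thm:schmudgen}), finite-dimensional \srep{}s of the two factors tensor together into finite-dimensional \srep{}s that separate the elements of the minimal tensor product.

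The crux is the equivalence itself. If KC holds then $\mathrm{C^*}\IF_d\otimes_{\max}\mathrm{C^*}\IF_d=\mathrm{C^*}\IF_d\otimes_{\min}\mathrm{C^*}\IF_d$ is RFD by the previous step. For the converse I would use that every finite-dimensional \srep of the maximal tensor product factors through the minimal one: on a finite-dimensional Hilbert space the minimal and maximal $\mathrm{C}^*$-norms of the algebraic tensor product coincide, since a pair of commuting finite-dimensional unitary representations automatically yields a representation bounded with respect to the minimal norm. Hence, if $\mathrm{C^*}\IF_d\otimes_{\max}\mathrm{C^*}\IF_d$ is RFD, its separating family of finite-dimensional \srep{}s all factor through the canonical quotient onto the minimal tensor product, forcing that quotient to be injective; the two tensor products then coincide and KC follows.

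Having established KC $\Leftrightarrow$ ($\mathrm{C^*}(\IF_d\times\IF_d)$ is RFD), the corollary is immediate: apply the second bullet of Corollary~\ref{cor:poss} with $\G=\IF_d\times\IF_d$, which translates the residual finite dimensionality of $\mathrm{C^*}(\IF_d\times\IF_d)$ into precisely the displayed statement that $\pi(f)\geq0$ for all finite-dimensional unitary representations $\pi$ implies $f\in\arch(\ik[\IF_d\times\IF_d]_+)$. The main obstacle is the ``max RFD $\Rightarrow$ KC'' direction, whose essential input is that finite-dimensional representations of the maximal tensor product factor through the minimal one; this rests in turn on the already-assumed, genuinely nontrivial residual finite dimensionality of $\mathrm{C^*}\IF_d$. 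Everything else is bookkeeping.
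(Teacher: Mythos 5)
Your proposal is correct and follows essentially the same route as the paper: the paragraph preceding the corollary establishes exactly your two steps, namely that KC is equivalent to residual finite dimensionality of $\mathrm{C}^*(\IF_d\times\IF_d)\cong\mathrm{C^*}\IF_d\otimes_{\max}\mathrm{C^*}\IF_d$ (using that $\mathrm{C^*}\IF_d\otimes_{\min}\mathrm{C^*}\IF_d$ is RFD and that finite-dimensional \srep{}s of the maximal tensor product factor through the minimal one), and then invokes the second bullet of Corollary~\ref{cor:poss}. Your added justification of the factorization step is the standard one and fills in the detail the paper leaves implicit.
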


While residual finite-dimensionality of $\mathrm{C^*}(\IF_d\times\IF_d)$
(Kirchberg's Conjecture) remains open, one can prove that it satisfies
the following weaker property.

\begin{defn}
A \ca $A$ is said to be \emph{quasi-diagonal} if
there is a net of u.c.p.\ maps $\p_n\colon A\to\IM_{k(n)}(\IC)$ such that
\[
\|\p_n(xy)-\p_n(x)\p_n(y)\|\to0
\]
for every $x,y\in A$, and
\[
\{ x \in A : \p_n(x)\geq0 \mbox{ for all }n\}=A_+.
\]
\end{defn}

\begin{thm}[\cite{bo}]\label{thm:qd}
The \ca $\mathrm{C^*}(\IF_d\times\IF_d)$ is quasi-diagonal.
\end{thm}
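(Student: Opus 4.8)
The plan is to verify quasi-diagonality through its Voiculescu-type reformulation, which is precisely the definition stated above: I must produce a net of u.c.p.\ maps $\phi_n\colon\mathrm{C^*}(\IF_d\times\IF_d)\to\IM_{k(n)}(\IC)$ that is asymptotically multiplicative and jointly detects positivity. Write $A=\mathrm{C^*}\IF_d$, so that $\mathrm{C^*}(\IF_d\times\IF_d)=A\otimes_{\max}A$ carries two commuting copies of $A$, generated by unitaries $a_1,\dots,a_d$ and $b_1,\dots,b_d$ with $[a_i,b_j]=0$. The first step would be to fix a concrete faithful representation realizing the maximal cone: by Theorem~\ref{thm:kirchbergm} there is a family of tracial states whose bimodules assemble into a faithful positive \srep $\Pi=\bigoplus_\ell(\pi_{\tau_\ell}\times\pi_{\tau_\ell}^{\op})$ of $A\otimes_{\max}A$ on $\bigoplus_\ell L^2(M_\ell,\tau_\ell)$, the left and right multiplications furnishing the two commuting copies. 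Compressing $\Pi$ by any net of finite-rank projections $P_n\nearrow1$ gives u.c.p.\ maps $\phi_n=P_n\Pi(\,\cdot\,)P_n$ into matrix algebras; since $\Pi$ is faithful and $P_n\to1$ strongly, these automatically detect positivity. Thus the problem collapses to a single quasidiagonalization statement: the operators $\Pi(A\otimes_{\max}A)$ admit finite-rank $P_n\nearrow1$ with $\|[P_n,\Pi(a_i\otimes1)]\|\to0$ and $\|[P_n,\Pi(1\otimes b_j)]\|\to0$, because then for generators $\phi_n(xy)-\phi_n(x)\phi_n(y)=P_n\Pi(x)(1-P_n)\Pi(y)P_n\to0$.

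The two ingredients I would feed into this are, first, that each copy \emph{alone} is quasidiagonal: by Choi's theorem $A=\mathrm{C^*}\IF_d$ is RFD, and the block-diagonal faithful representation $\bigoplus_m\sigma_m$ built from finite-dimensional representations $\sigma_m$ is quasidiagonalized by the exact truncation projections, so $A$ is QD; and second, Kirchberg's Theorem~\ref{thm:kirchberg}, $A\otimes_{\max}\IB(\ell_2)=A\otimes_{\min}\IB(\ell_2)$, which is the only available tool that controls how the two commuting copies are allowed to interact. The strategy is to combine a genuine finite-dimensional representation of one copy (available by RFD) with a quasidiagonalizing net for the commuting copy, using Kirchberg's identity to keep the combination asymptotically multiplicative while enlarging the cone it sees from the minimal to the maximal one. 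Concretely, I would start from the honest finite-dimensional representations $\pi_n\otimes\pi_n'$ that witness RFD of $A\otimes_{\min}A$ (noted above); these are genuine \shom{}s, hence trivially asymptotically multiplicative, but they detect only the \emph{minimal} positive cone, and the task is to correct them so as to detect the strictly finer maximal cone.

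This last point is where the difficulty concentrates, and it is the reason the theorem is genuinely weaker than Kirchberg's Conjecture rather than equivalent to it. As recalled above, KC is the assertion that $\mathrm{C^*}(\IF_d\times\IF_d)$ is RFD, i.e.\ that the maximal cone is already detected by \emph{exactly} commuting finite-dimensional pairs of unitaries; quasi-diagonality asks only for \emph{asymptotically} commuting pairs. The main obstacle, then, is to manufacture pairs of almost-commuting unitary matrices $(U_i^{(n)},V_j^{(n)})$---necessarily not close to exactly commuting ones, by Voiculescu's obstruction to approximating almost-commuting unitaries by commuting ones---whose associated u.c.p.\ maps asymptotically recover the maximal norm. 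Because $\IF_d\times\IF_d$ is non-amenable, the bimodules $L^2(M_\ell,\tau_\ell)$ carry no F\o lner sequence of finite-dimensional subspaces, so a direct averaging construction of the projections $P_n$ is unavailable; the role of Kirchberg's theorem (equivalently, the local lifting property of $\mathrm{C^*}\IF_d$) is precisely to supply the simultaneous almost-invariance for both commuting copies that amenability would otherwise provide. I expect that making this substitution rigorous---propagating the one-sided quasidiagonalization of a single free copy across the commuting pair without collapsing the maximal cone to the minimal one---is the crux, with the remaining verifications (self-adjointness, complete positivity, and the commutator estimate above) being routine.
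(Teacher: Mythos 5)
Your proposal does not contain a proof: it recasts quasi-diagonality in Voiculescu's local form for the particular faithful representation $\Pi=\bigoplus_\ell(\pi_{\tau_\ell}\times\pi_{\tau_\ell}^{\op})$ and then stops exactly where the content lies. Asking for finite-rank projections $P_n\nearrow 1$ with $\|[P_n,\Pi(a_i\otimes1)]\|\to0$ and $\|[P_n,\Pi(1\otimes b_j)]\|\to0$ is (after arranging $\Pi$ to be essential) \emph{equivalent} to the statement being proved, by Voiculescu's theorem that quasi-diagonality is independent of the choice of faithful essential representation; so the ``reduction'' is a restatement, not a step forward. You then concede this yourself: the passage beginning ``I expect that making this substitution rigorous\dots is the crux'' admits that the almost-commuting pairs $(U_i^{(n)},V_j^{(n)})$ detecting the maximal cone are never constructed. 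The proposed mechanism --- that Kirchberg's identity $A\otimes_{\max}\IB(\ell_2)=A\otimes_{\min}\IB(\ell_2)$, or the LLP of $\mathrm{C}^*\IF_d$, should ``supply the simultaneous almost-invariance that amenability would otherwise provide'' --- is not substantiated and is not how any known argument runs: for a non-hyperfinite $M_\ell$ there is no evident supply of finite-rank projections on $L^2(M_\ell,\tau_\ell)$ norm-asymptotically commuting with both the left and the right action, and nothing in your text produces them.

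The missing idea, and the route the paper actually takes, is homotopy. By Voiculescu's theorem quasi-diagonality is a homotopy invariant; $\mathrm{C}^*\IZ\cong C(\IT)\subset C([0,1])$ is homotopically contained in $\IC1$ because $C([0,1])$ deformation retracts onto the scalars; and the property of being homotopically contained in $\IC1$ is preserved by unital full free products and by tensor products. Applying this first to $\mathrm{C}^*\IF_d=\mathrm{C}^*\IZ*\cdots*\mathrm{C}^*\IZ$ and then to $\mathrm{C}^*(\IF_d\times\IF_d)=\mathrm{C}^*\IF_d\otimes_{\max}\mathrm{C}^*\IF_d$ yields the theorem without ever exhibiting the projections $P_n$: all of the hard analysis is absorbed into Voiculescu's homotopy invariance theorem. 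A direct construction along the lines you sketch would in effect have to reprove that theorem in this special case, which is why your remaining ``routine verifications'' cannot be completed as written.
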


The proof requires homotopy theory for \ca{}s.
It is a celebrated theorem of Voiculescu that quasi-diagonality
is a homotopy invariant. See Chapter 7 in \cite{bo} for the proof
of it and more information on quasi-diagonality.
Recall that two \shom{}s $\pi_0$, $\pi_1$ from $A$ to $B$
are \emph{homotopic}, denoted by $\pi_0\sim_{\rh}\pi_1$, if they
are connected by a pointwise continuous path $(\pi_t)_{t\in[0,1]}$
of \shom{}s; and \ca{}s $A$ and $B$ are \emph{homotopic} if
there are \shom{}s $\alpha\colon A\to B$ and $\beta\colon B\to A$
such that $\beta\circ\alpha\sim_{\rh}\id_A$ and
$\alpha\circ\beta\sim_{\rh}\id_B$.
We say $A$ is \emph{homotopically contained} in $B$ if
there are a \ca $B_1$ which is homotopic to $B$, a \shom{}
$\alpha\colon A\to B_1$, and a u.c.p.\ map $\beta\colon B_1\to A^{**}$
such that $\beta\circ\alpha=\iota_A$, the canonical embedding of $A$ into $A^{**}$.
We note that if there are \shom{}s $\alpha\colon A\to B$
and $\beta\colon B\to A^{**}$ such that $\beta\circ\alpha\sim_{\rh}\iota_A$,
then $A$ is homotopically contained in $B$.
Indeed, if $\gamma_t$ is a homotopy from $\iota_A$ to $\beta\circ\alpha$, then
$\gamma\oplus\alpha$ gives rise to an embedding of $A$ into the mapping cylinder
\[
Z_\beta=\{ (f,b)\in C([0,1],A^{**})\oplus B : f(1)=\beta(b)\},
\]
which is homotopic to $B$.

\begin{proof}[Proof of Theorem~\ref{thm:qd}]
This follows from Voiculescu's theorem, once we observe that
the property being homotopically contained in the trivial \ca $\IC1$
(or finite-dimensional $\mathrm{C}^*$-algebras, see \cite{act})
is preserved by unital full free products and tensor products.
Note that $\mathrm{C}^*\IZ \cong C(\IT)\subset C([0,1])$ has this property, to begin with.
\end{proof}

It is unclear for which group $\G$, the full group \ca{} $\mathrm{C}^*\G$ is quasi-diagonal
(or homotopically contained in $\IC1$---this is not the case when $\mathrm{C}^*\G$
has nontrivial projections, e.g., when $\G$ has torsion or Kazhdan's property $\mathrm{(T)}$).
A well-known conjecture of Rosenberg (see \cite{cde}) asserts that all amenable groups
should have quasi-diagonal $\mathrm{C}^*\G$.
On the other extreme, any infinite simple Kazhdan's property (T) group gives
rise to a counterexample (\cite{kirchberg:t}).
The case for $\mathrm{SL}_3(\IZ)$ is unclear (cf.\ \cite{bekka,bekkas}).
\subsection{Operator systems}\label{sec:opsys}
We will prove the Choi--Effros theorem giving the abstract characterization of semi-operator systems.
Recall that a \emph{(semi-)operator system} is a unital $*$-subspace $\cS$
of a (semi-pre-)\ca $\cA$, equipped with the matricial positive cone
$\IM_n(\cS)_+:=\IM_n(\cS)\cap\IM_n(\cA)_+$ for each $n\in\IN$.
Here, we do not assume it norm-closed.
We generally think that the specific embedding $\cS\subset\cA$ is \emph{not} a part of the
operator system structure of $\cS$, but only the matricial positive cones
$(\IM_n(\cS)_+)_{n=1}^\infty$ are.
A map $\p\colon \cS\to \cT$ between operator systems is said to be \emph{completely positive}
(c.p.) if $\id\otimes\p\colon \IM_n(\cS)\to\IM_n(\cT)$ is positive for all $n$.
Thus $\cS$ and $\cT$ are \emph{completely order isomorphic} if there is a unital c.p.\
(u.c.p.) linear isomorphism $\p\colon\cS\to\cT$ such that $\p^{-1}$ is also c.p.
It is not too hard to see that a semi-operator system $(\cS, (\IM_n(\cS)_+)_{n=1}^\infty)$
satisfies the following axiom:
\begin{enumerate}[(i)]
\item
$\IR_{\geq0}1\in\IM_n(\cS)_+$
and $\lambda a + b \in \IM_n(\cS)_+$ for every $a,b \in \IM_n(\cS)_+$ and $\lambda\in\IR_{\geq0}$;
\item
$x^*ax\in\IM_n(\cS)_+$ for every $m,n\in\IN$, $a\in\IM_m(\cS)_+$ and $x\in\IM_{m,n}(\ik)$;
\item For every $n$ and every $h\in\IM_n(\cS)_\rh$, there is $R>0$ such that $h\le R1$.
\interitemtext{If $\cS$ is an operator system, then it moreover satisfies the following:}
\item
$\IM_n(\cS)_+\cap(-\IM_n(\cS)_+)=\{0\}$ and $\IM_n(\cS)_+$ is archimedean closed;
\item\label{noghost}
$I(\cS):=\{ x\in\cS : \left[\begin{smallmatrix} 0 & x^* \\ x & 0\end{smallmatrix}\right]\in\arch(\IM_2(\cS)_+)\}=\{0\}$.
\end{enumerate}
We recall the ground assumption that the unit is always written as $1$, and
so is the unit matrix in $\IM_n(\ik)$. The definition of $x^*ax$ is the obvious one:
\[
x^*ax=[\sum_{k,l}x_{k,i}^*a_{k,l}x_{l,j}]_{i,j}
=[\sum_{k,l}x_{k,i}^*x_{l,j} a_{k,l}]_{i,j}
\mbox{ for }x=[x_{k,i}]_{k,i}
\mbox{ and }a=[a_{k,l}]_{k,l}.
\]
An \emph{abstract (semi-)operator system} is a system $(\cS, (\IM_n(\cS)_+)_{n=1}^\infty)$ satisfying
the above axiom. The condition (\ref{noghost}) follows from other conditions in case $\ik=\IC$.
Given an abstract semi-operator system $\cS$, we introduce the corresponding
\spc $\cA(\cS)$ as the universal unital $*$-algebra generated by $\cS$,
equipped with the $*$-positive cone
\[
\cA(\cS)_+=\{ x^*ax : n\in\IN,\ a\in\IM_n(\cS)_+,\ x\in\IM_{n,1}(\cA(\cS))\}.
\]
To see that the Combes axiom is satisfied, let $x\in\cS$ be given arbitrary.
Then, there is $R>0$ such that
$\left[\begin{smallmatrix} R1 & -x \\ -x^* & R1\end{smallmatrix}\right]\in\IM_2(\cS)_+$.
It follows that
\[
0\le \frac{1}{R}\begin{bmatrix} x^* & R1\end{bmatrix}
\begin{bmatrix} R1 & -x \\ -x^* & R1\end{bmatrix}\begin{bmatrix} x \\ R1\end{bmatrix}
=R^21-x^*x.
\]
This implies that $\cS\subset \cA(\cS)^{\mathrm{bdd}}$ and hence the Combes axiom follows.
The universal $\mathrm{C}^*$-algebra $\mathrm{C}^*_\ru(\cA(\cS))$ of $\cA(\cS)$
is generated by $\iota(\cS)$ and has the following universal property:
Every u.c.p.\ map $\cS$ into a $\mathrm{C}^*$-algebra $B$ extends
to a \shom on $\mathrm{C}^*_\ru(\cA(\cS))$.
We call $\mathrm{C}^*_\ru(\cA(\cS))$ the \emph{universal $\mathrm{C}^*$-algebra}
of the semi-operator system $\cS$ and denote it simply by $\mathrm{C}^*_\ru(\cS)$.

\begin{thm}[\cite{ce}]\label{thm:ce}
Let $\cS$ be a semi-operator system.
Then, one has
\[
\ker(\iota\colon \cS\to\mathrm{C}^*_\ru(\cS))=I(\cS)\
\]
and
\[
\arch(\IM_n(\cS)_+) = \IM_n(\cS)_\rh \cap \iota^{-1}(\IM_n(\mathrm{C}^*_\ru(\cS))_+)
\]
for every $n$.
In particular, an abstract operator system is completely order isomorphic to
a concrete operator system.
\end{thm}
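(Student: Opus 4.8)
The plan is to derive the whole theorem from Theorem~\ref{thm:fundamental} and Lemma~\ref{lem:matspc} applied to the \spc $\IM_n(\cA(\cS))$, the real work being the comparison of the operator-system cone $\IM_n(\cS)_+$ with the ambient \spc cone $\IM_n(\cA(\cS))_+$. Since $\mathrm{C}^*_\ru(\cS)=\mathrm{C}^*_\ru(\cA(\cS))$ and, by Lemma~\ref{lem:matspc}, $\mathrm{C}^*_\ru(\IM_n(\cA(\cS)))=\IM_n(\mathrm{C}^*_\ru(\cS))$, Theorem~\ref{thm:fundamental} for the \spc $\IM_n(\cA(\cS))$ gives
\[
\arch(\IM_n(\cA(\cS))_+)=\IM_n(\cA(\cS))_\rh\cap\iota^{-1}(\IM_n(\mathrm{C}^*_\ru(\cS))_+).
\]
Intersecting with $\IM_n(\cS)_\rh$ and using $\IM_n(\cS)_\rh\subset\IM_n(\cA(\cS))_\rh$, the second asserted identity becomes equivalent to the single claim that, for every $n$,
\[
\arch(\IM_n(\cS)_+)=\IM_n(\cS)_\rh\cap\arch(\IM_n(\cA(\cS))_+),
\]
call this identity $(\star)$, which is what I would prove. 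I would phrase $(\star)$ representation-theoretically: by the universal property of $\mathrm{C}^*_\ru(\cS)$, positive \srep{}s of $\cA(\cS)$ correspond exactly to u.c.p.\ maps $\varphi\colon\cS\to\IB(\Hil)$ (restrict one way; freely extend a u.c.p.\ map to a unital algebra homomorphism $\tilde\varphi$ the other way, positivity being immediate from $\tilde\varphi(x^*ax)=\tilde\varphi(x)^*(\id\otimes\varphi)(a)\tilde\varphi(x)$ for $a\in\IM_m(\cS)_+$). Consequently $\iota(h)\geq0$ in $\IM_n(\mathrm{C}^*_\ru(\cS))$ if and only if $(\id_n\otimes\varphi)(h)\geq0$ for every u.c.p.\ $\varphi$.

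For the inclusion $\subseteq$ in $(\star)$: if $h\in\arch(\IM_n(\cS)_+)$ then $h+\e1\in\IM_n(\cS)_+$ for all $\e>0$, so $(\id_n\otimes\varphi)(h)+\e1\geq0$ for every u.c.p.\ $\varphi$ (using that $\id_n\otimes\varphi$ is positive and $\varphi(1)=1$); letting $\e\searrow0$ gives $\iota(h)\geq0$, hence $h\in\arch(\IM_n(\cA(\cS))_+)$ by the displayed Fundamental-Theorem identity. The reverse inclusion is the crux and the main obstacle, and I would argue by contraposition and separation. If $h\in\IM_n(\cS)_\rh\setminus\arch(\IM_n(\cS)_+)$, then, the order unit $1$ being an algebraic interior point of the solid cone $\IM_n(\cS)_+$ (axiom (iii)), there is $\e_0>0$ with $h+\e_01\notin\IM_n(\cS)_+$; by the Eidelheit--Kakutani Theorem~\ref{thm:hbek} there is a positive self-adjoint unital functional $\psi$ on $\IM_n(\cS)$ with $\psi(h)<0$. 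The delicate point is to turn $\psi$ into an honest \emph{unital} c.p.\ map: by Lemma~\ref{lem:corr}, $\psi$ is the functional associated with a c.p.\ map $\varphi\colon\cS\to\IM_n(\ik)$ and $\psi(h)=\langle(\id_n\otimes\varphi)(h)\zeta,\zeta\rangle$ with $\zeta=\sum_i\delta_i\otimes\delta_i$, so $(\id_n\otimes\varphi)(h)\not\geq0$; but $\varphi$ need not be unital. Passing to the support projection of $\varphi(1)$ and the u.c.p.\ compression $\varphi'(\cdot)=\varphi(1)^{-1/2}\varphi(\cdot)\varphi(1)^{-1/2}$ of Lemma~\ref{lem:corr}, whose defining property is exactly that $(\id_n\otimes\varphi)(h)\geq0\Leftrightarrow(\id_n\otimes\varphi')(h)\geq0$, I obtain a genuine u.c.p.\ map with $(\id_n\otimes\varphi')(h)\not\geq0$. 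Hence $\iota(h)\not\geq0$ and $h\notin\arch(\IM_n(\cA(\cS))_+)$, which proves $(\star)$ and thereby the second identity of the theorem.

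Finally I would read off the first identity and the corollary. For $x\in\cS$ one has $\iota(x)=0$ if and only if $\bigl[\begin{smallmatrix}0&\iota(x)^*\\\iota(x)&0\end{smallmatrix}\bigr]\geq0$ in $\IM_2(\mathrm{C}^*_\ru(\cS))$ (conjugating by $\diag(1,-1)$ shows a positive off-diagonal $2\times2$ matrix must vanish), which by the case $n=2$ of the identity just proved is equivalent to $\bigl[\begin{smallmatrix}0&x^*\\x&0\end{smallmatrix}\bigr]\in\arch(\IM_2(\cS)_+)$, i.e.\ $x\in I(\cS)$; this gives $\ker\iota=I(\cS)$. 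For an abstract operator system, axioms (iv) and (v) say that each $\IM_n(\cS)_+$ is archimedean closed and that $I(\cS)=\{0\}$, so $\iota$ is injective and the second identity collapses to $\IM_n(\cS)_+=\IM_n(\cS)_\rh\cap\iota^{-1}(\IM_n(\mathrm{C}^*_\ru(\cS))_+)$ for all $n$; this is precisely the statement that $\iota$ is a complete order isomorphism onto the concrete operator system $\iota(\cS)\subset\mathrm{C}^*_\ru(\cS)$.
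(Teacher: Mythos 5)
Your proof is correct and follows essentially the same route as the paper: the crux in both is the separation of an element $h\notin\arch(\IM_n(\cS)_+)$ by a state on $\IM_n(\cS)$, its conversion to a c.p.\ map via Lemma~\ref{lem:corr}, and the support-projection normalization to a u.c.p.\ map that is then promoted to a positive \srep of $\mathrm{C}^*_\ru(\cS)$. Your reduction to the identity $(\star)$ via Theorem~\ref{thm:fundamental} and Lemma~\ref{lem:matspc}, and the explicit $2\times2$ off-diagonal argument deriving $\ker\iota=I(\cS)$ from the $n=2$ case, are just more detailed packagings of steps the paper leaves implicit.
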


\begin{proof}
By definition, one has $\IM_n(\cS)_+\subset\IM_n(\mathrm{C}^*_\ru(\cS))_+$.
Also, it is not difficult to see $I(\cS)\subset\ker\iota$.
For the converse inclusions, it suffices to show that
for every $a\in\IM_n(\cS)_\rh\setminus\arch(\IM_n(\cS)_+)$,
there is a positive \srep $\pi$ on $\mathrm{C}^*_\ru(\cS)$ such that
$(\id\otimes\pi)(a)\not\geq0$. (This in particular shows that $\ker\iota\subset I(\cS)$.)
For this, it is enough to find a u.c.p.\ map $\psi\colon\cS\to\IB(\Hil)$ such that
$(\id\otimes\psi)(a)\not\geq0$.
Since $\IM_n(\cS)_+$ is algebraically solid in $\IM_n(\cS)_\rh$,
one may find a state $\tilde{\p}$ on $\IM_n(\cS)$ such that
$\tilde{\p}(a)<0$. Let $\p\colon\cS\to\IM_n(\ik)$ be
the corresponding c.p.\ map to $\tilde{\p}$ (see Lemma~\ref{lem:corr}).
Then, $\p(1)\in\IM_n(\ik)$ is a self-adjoint positive element and
for the support projection $p$ of $\p(1)$, one has $\p(x)=p\p(x)p$ for all $x\in\cS$
(because $\left[\begin{smallmatrix} R\p(1) & \p(x)^* \\ \p(x) & R\p(1)\end{smallmatrix}\right]\geq0$
for some $R>0$).
It follows that the map $\psi$ defined by $\psi(x)=\p(1)^{-1/2}\p(x)\p(1)^{-1/2}|_{p\ell_2^n}$
is a u.c.p.\ map from $\cS$ into $\IB(p\ell_2^n)$ such that $\psi(a)\not\geq0$.
This completes the proof.
\end{proof}

The proof above is short and classical, but it would be nicer if there is another proof which
relies more directly on the definition of $\cA(\cS)_+$. There is another very
interesting \ca associated with an operator system $\cS$, called the
$\mathrm{C}^*$-envelope $\mathrm{C}^*_\re(\cS)$ of $\cS$ (see Chapter 15 in \cite{paulsen}),
but the explicit description of the corresponding \spc structure on $\cA(\cS)$ is unclear.
\subsection{Operator system duality and tensor products}
An important corollary to Theorem~\ref{thm:ce} is the operator system duality.
Let $\cS$ be a semi-operator system. We denote by $\cS^\rd$ the linear dual of $\cS$,
and identify $\IM_n(\cS^\rd)$ with the space of
the linear maps from $\cS$ to $\IM_n(\ik)$ (cf.\ Lemma~\ref{lem:corr}) to
introduce the corresponding matricial positive cones on it.
Thus for $f=[f_{i,j}]_{i,j}\in\IM_n(\cS^\rd)$, one has
$f^*=[f^*_{j,i}]_{i,j}$ (recall that $f^*(x)=f(x^*)^*$),
and $f\geq0$ in $\IM_n(\cS^\rd)$ if and only if $x\mapsto [f_{i,j}(x)]\in\IM_n(\ik)$ is
c.p. Then, the system $(\cS^\rd, (\IM_n(\cS^\rd)_+)_{n=1}^\infty)$ satisfies all the axioms
of operator systems in Section~\ref{sec:opsys}, except the ones involving the unit $1$.
We also observe by Theorem~\ref{thm:ce} (and Theorem~\ref{thm:tensorize})
that $a\in\IM_m(\cS)_\rh$ belongs to $\arch(\IM_m(\cS)_+)$ if and only if $a$ is c.p.\
as a linear map from $\cS^\rd$ to $\IM_m(\ik)$.
It follows that $\cS\hookrightarrow\cS^{\rd\rd}$ is a complete order isomorphic embedding
modulo archimedean closure.
When $\cS$ is a finite-dimensional operator system, then one can pick any faithful state $p$
on $\cS$, and make $(\cS^\rd, (\IM_n(\cS^\rd)_+)_{n=1}^\infty)$
an abstract operator system with the unit $p$. (So, the operator system structure of
$\cS^\rd$ depends on the choice of the unit $p$, and one has to choose the canonical one
if exists. For example, the unit of $\cS^{\rd\rd}=(\cS^\rd)^\rd$ has to be the unit $1$ of $\cS$.)
Summarizing the above discussion, we reach to the following corollary.
\begin{cor}[\cite{ce}]\label{cor:ce}
Let $\cS$ be a finite-dimensional operator system.
Then, its dual $\cS^\rd$ is an (abstract) operator system.
Moreover, the natural isomorphism $\cS=\cS^{\rd\rd}$ is a complete order isomorphism.
\end{cor}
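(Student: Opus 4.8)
The plan is to equip $\cS^\rd$ with a unit and then read off the double-dual identification from the duality already developed before the statement. As recorded there, the system $(\cS^\rd,(\IM_n(\cS^\rd)_+)_{n=1}^\infty)$ already satisfies every axiom of an abstract operator system from Section~\ref{sec:opsys} \emph{except} those mentioning the unit, so the only genuine work is to exhibit an order unit and verify the Combes (archimedean) axiom for it. First I would fix a \emph{faithful} state $p$ on $\cS$ and declare it the unit of $\cS^\rd$; this is where finite-dimensionality enters at the outset. Indeed, $\cS_+$ is a proper, closed (being archimedean closed in finite dimensions) cone spanning $\cS_\rh$, and $1$ is an algebraic interior point of it, so the dual cone of positive functionals has nonempty interior; any interior functional, normalized so that its value at $1$ is $1$, is a faithful state $p$.

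The main step is the Combes axiom for $\cS^\rd$: that the element of $\IM_n(\cS^\rd)$ corresponding to the map $x\mapsto p(x)1\in\IM_n(\ik)$ is a matricial order unit. Under the correspondence of Lemma~\ref{lem:corr}, $\IM_n(\cS^\rd)_+$ is exactly the cone of self-adjoint positive functionals on the finite-dimensional operator system $\IM_n(\cS)$, and this candidate unit corresponds to the functional $\tau_p([x_{i,j}]_{i,j})=\sum_i p(x_{i,i})$. I would show $\tau_p$ is faithful on $\IM_n(\cS)$: if $a=[a_{i,j}]_{i,j}\in\IM_n(\cS)_+$ satisfies $\tau_p(a)=0$, then each diagonal compression $a_{i,i}$ is positive and $p(a_{i,i})=0$, whence $a_{i,i}=0$ by faithfulness of $p$; a positive matrix with zero diagonal then vanishes, since each $2\times2$ compression $\left[\begin{smallmatrix} 0 & a_{i,j}\\ a_{i,j}^* & 0\end{smallmatrix}\right]$ is positive and hence $a_{i,j}=0$ by axiom~(v) ($I(\cS)=\{0\}$). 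A faithful functional on a proper closed full-dimensional cone in a finite-dimensional space is an algebraic interior point of the dual cone, equivalently an order unit; translating back, $x\mapsto p(x)1$ is an order unit for $\IM_n(\cS^\rd)$, which is precisely the Combes axiom. The remaining unit-involving axioms are automatic: properness and archimedean-closedness of the completely positive cone hold because it is a proper closed cone in a finite-dimensional space, and $I(\cS^\rd)=\{0\}$ follows as in Section~\ref{sec:opsys} (automatic when $\ik=\IC$, and checked directly otherwise). Thus $\cS^\rd$ is an operator system with unit $p$.

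Finally I would deduce $\cS=\cS^{\rd\rd}$. Forming $\cS^{\rd\rd}=(\cS^\rd)^\rd$, its canonical unit is forced to be the original unit $1\in\cS$. By the observation recorded just before the statement (a consequence of Theorem~\ref{thm:ce} together with Theorem~\ref{thm:tensorize}), for each $m$ an element $a\in\IM_m(\cS)_\rh$ lies in $\arch(\IM_m(\cS)_+)$ if and only if $a$ is completely positive as a map $\cS^\rd\to\IM_m(\ik)$, i.e.\ if and only if its image in $\IM_m(\cS^{\rd\rd})$ is positive. Since $\cS$ is an honest operator system its cones are archimedean closed, so $\arch(\IM_m(\cS)_+)=\IM_m(\cS)_+$ and the canonical unital map $\cS\to\cS^{\rd\rd}$ is a complete order embedding. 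Being a linear isomorphism in finite dimensions, its inverse is automatically completely positive as well, so $\cS\to\cS^{\rd\rd}$ is a complete order isomorphism.

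The step I expect to be the real obstacle is the Combes axiom for $\cS^\rd$: proving the faithfulness of $\tau_p$ on every amplification $\IM_n(\cS)$ and the passage from faithfulness to an algebraic interior point of the dual cone. This is the only place where finite-dimensionality is truly indispensable, everything else being the formal duality already set up before the statement.
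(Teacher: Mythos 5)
Your proposal is correct and takes essentially the same route as the paper, whose ``proof'' of this corollary is the discussion immediately preceding it: define the matricial cones on $\cS^\rd$ via the correspondence of Lemma~\ref{lem:corr}, note that only the unit-involving axioms need checking, use the observation that $a\in\IM_m(\cS)_\rh$ lies in $\arch(\IM_m(\cS)_+)$ if and only if it is c.p.\ as a map $\cS^\rd\to\IM_m(\ik)$, and choose a faithful state $p$ as the unit of $\cS^\rd$. You have merely supplied the verifications the paper leaves implicit (existence of a faithful state in finite dimensions and the order-unit property of $\tau_p$ on each $\IM_n(\cS)$), and these are carried out correctly.
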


Let $\cS$ and $\cT$ be (semi-)operator systems, and $\cS\otimes\cT$ be their tensor product.
There are two (in fact more) canonical operator system structures on $\cS\otimes\cT$ (\cite{kptt,fp}).
The first one is the minimal tensor product $\cS\otimes^{\min}\cT$, defined by
\[
\IM_n(\cS\otimes^{\min}\cT)_+=\IM_n(\cS\otimes\cT)
 \cap (\id\otimes\iota_\cS\otimes\iota_\cT)^{-1}
   (\IM_n(\mathrm{C}^*_\ru(\cS)\otimes^{\min}\mathrm{C}^*_\ru(\cT))_+).
\]
We observe from Theorem~\ref{thm:tensorize} that
an element $c\in\IM_n(\cS\otimes\cT)_\rh$ is positive if and only if
$(\id\otimes\p\otimes\psi)(c)\in \IM_{nkl}(\ik)_+$ for all $k,l\in\IN$ and
c.p.\ maps $\p\colon \cS\to\IM_k(\ik)$ and $\psi\colon\cT\to\IM_l(\ik)$.
The second one is the maximal operator system tensor product $\cS\otimes^{\max}\cT$,
defined by
\[
\IM_n(\cS\otimes^{\max}\cT)_+ = \arch(\{x^* (s\otimes t) x :
 x\in\IM_{kl,n}(\ik),\, s\in\IM_k(\cS)_+,\, t\in\IM_l(\cT)_+\}),
\]
where $x^* (s\otimes t) x
=[\sum_{p,p',q,q'} x_{(p,q),i}^*(s_{p,p'}\otimes t_{q,q'})x_{(p',q'),j}]_{i,j} \in \IM_n(\cS\otimes\cT)$.
We note that for $\cS_1\subset\cS_2$ and $\cT_1\subset\cT_2$, the natural
embedding $\cS_1\otimes\cT_1\hookrightarrow \cS_2\otimes\cT_2$
is a complete order isomorphic embedding with respect to minimal tensor products,
but need not be so with respect to the maximal tensor products.
It is not too hard to see that for any operator systems $\cS$ and $\cT$,
both minimal and maximal tensor product satisfy the axiom of operator systems,
and that the formal identity map from $\cS\otimes^{\max}\cT$ to $\cS\otimes^{\min}\cT$ is c.p.
These two tensor products are dual to each other in the following sense.
\begin{thm}[\cite{fp}]\label{thm:fp}
Let $\cS$ and $\cT$ be finite-dimensional operator systems.
Then the natural isomorphism
\[
(\cS\otimes^{\min}\cT)^\rd=\cS^\rd\otimes^{\max}\cT^\rd
\]
is a complete order isomorphism.
\end{thm}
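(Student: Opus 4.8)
The plan is to prove that the natural vector-space identification $(\cS\otimes\cT)^\rd=\cS^\rd\otimes\cT^\rd$ (available since $\cS,\cT$ are finite dimensional) is a complete order isomorphism, i.e.\ that the two matricial cones coincide at every level. By Corollary~\ref{cor:ce} the duals $\cS^\rd$, $\cT^\rd$ are genuine operator systems and $\cS^{\rd\rd}=\cS$, $\cT^{\rd\rd}=\cT$, so everything in sight is a cone in the single real space $\IM_n(\cS^\rd\otimes\cT^\rd)_\rh$, and it suffices to prove for each $n$ that
\[
\IM_n(\cS^\rd\otimes^{\max}\cT^\rd)_+ = \IM_n((\cS\otimes^{\min}\cT)^\rd)_+ .
\]

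First I would reinterpret the right-hand cone. By the definition of the dual operator-system structure, a positive element of $\IM_n((\cS\otimes^{\min}\cT)^\rd)$ is exactly a c.p.\ map $\cS\otimes^{\min}\cT\to\IM_n(\ik)$, and by the correspondence of Lemma~\ref{lem:corr} these are precisely the positive self-adjoint functionals on $\IM_n(\cS\otimes^{\min}\cT)$. Thus, under the natural pairing of $\IM_n(\cS^\rd\otimes\cT^\rd)$ with $\IM_n(\cS\otimes\cT)$, the right-hand cone consists of exactly those functionals that are nonnegative on $\IM_n(\cS\otimes^{\min}\cT)_+$.

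The heart of the proof is then the cone duality: for $w\in\IM_n(\cS\otimes\cT)$ one has $\langle w, M\rangle\ge0$ for every $M\in\IM_n(\cS^\rd\otimes^{\max}\cT^\rd)_+$ if and only if $w\in\IM_n(\cS\otimes^{\min}\cT)_+$. Here a positive $f\in\IM_k(\cS^\rd)$ is the same datum as a c.p.\ map $\Phi_f\colon\cS\to\IM_k(\ik)$, and likewise $g\leftrightarrow\Phi_g$, so that $f\otimes g$ is the c.p.\ map $\Phi_f\otimes\Phi_g$. A maximal-cone generator $x^*(f\otimes g)x$ then pairs with $w$ so that, as $x$ runs over $\IM_{kl,n}(\ik)$, the family of inequalities $\langle w, x^*(f\otimes g)x\rangle\ge0$ is exactly the assertion that $(\id\otimes\Phi_f\otimes\Phi_g)(w)\ge0$ in $\IM_{nkl}(\ik)$, the matrices $x$ supplying the test vectors for this positive matrix. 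By the c.p.-map characterization of the minimal cone in Theorem~\ref{thm:tensorize}, this holds for all positive $f,g$ of all sizes precisely when $w\in\IM_n(\cS\otimes^{\min}\cT)_+$; the same computation read in the reverse direction gives the opposite implication and establishes the duality.

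Finally I would invoke the bipolar theorem. The maximal cone $\IM_n(\cS^\rd\otimes^{\max}\cT^\rd)_+$ is archimedean closed by its very definition, and it is solid because $\cS^\rd\otimes^{\max}\cT^\rd$ carries an order unit, namely the tensor product of faithful states on $\cS$ and $\cT$; in finite dimensions such a closed solid convex cone equals its double dual cone. Combining this with the two previous steps, an element $M$ lies in $\IM_n(\cS^\rd\otimes^{\max}\cT^\rd)_+$ iff $\langle w, M\rangle\ge0$ for all $w\in\IM_n(\cS\otimes^{\min}\cT)_+$, iff $M$ is a positive functional on $\IM_n(\cS\otimes^{\min}\cT)$, iff $M\in\IM_n((\cS\otimes^{\min}\cT)^\rd)_+$, which is the desired level-$n$ equality. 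The main obstacle is the middle step: correctly converting the generators $x^*(f\otimes g)x$, together with the compressions supplied by $x$, into the single positivity condition $(\id\otimes\Phi_f\otimes\Phi_g)(w)\ge0$ tested by Theorem~\ref{thm:tensorize}, and checking the solidity and archimedean closedness needed for the bipolar theorem. Everything else reduces to bookkeeping with the finite-dimensional identifications $\cS^{\rd\rd}=\cS$ and $(\cS\otimes\cT)^\rd=\cS^\rd\otimes\cT^\rd$.
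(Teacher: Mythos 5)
Your argument is correct, and it reaches the theorem by a route that is recognizably dual to, but not the same as, the one in the paper. The computational heart is identical in both: pairing a generator $x^*(f\otimes g)x$ of the maximal cone against an element $w$ of the other side and recognizing the resulting family of inequalities, as $x$ varies, as the single condition $(\id\otimes\Phi_f\otimes\Phi_g)(w)\geq0$, which Theorem~\ref{thm:tensorize} identifies with membership in the minimal cone. Where you diverge is in how the ``hard'' inclusion is closed up. The paper first dualizes the statement (proving $\cS^\rd\otimes^{\min}\cT^\rd=(\cS\otimes^{\max}\cT)^\rd$) and then, for the reverse direction, invokes Corollary~\ref{cor:ce} to identify a c.p.\ map on $(\cS\otimes^{\max}\cT)^\rd$ with an element of $\IM_n(\cS\otimes^{\max}\cT)_+$, i.e.\ it lets the already-established Choi--Effros double-duality do the work of recovering the maximal cone from its functionals. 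You instead stay with the statement as written, compute the dual cone of $\IM_n(\cS^\rd\otimes^{\max}\cT^\rd)_+$ to be $\IM_n(\cS\otimes^{\min}\cT)_+$, and appeal to the classical finite-dimensional bipolar theorem. That is legitimate, but it shifts onto you the burden of verifying that the maximal cone is genuinely topologically closed and solid at every matrix level; your solidity claim (that $1_n\otimes p\otimes q$ is an order unit for $\IM_n(\cS^\rd\otimes^{\max}\cT^\rd)_\rh$) is exactly the axiom-(iii) assertion the paper states without proof for $\otimes^{\max}$, so you should either prove it or cite it explicitly, since the whole bipolar step collapses without it. One genuine omission: for the two sides to be \emph{unital} order isomorphic you need the unit of $(\cS\otimes^{\min}\cT)^\rd$ to be $p\otimes q$, i.e.\ you need $p\otimes q$ to be a faithful state on $\cS\otimes^{\min}\cT$ when $p,q$ are faithful; the paper opens its proof with precisely this verification, and you should include it (it follows by applying c.p.\ maps into matrix algebras slot by slot, as in the paper). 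With those two points supplied, your proof is complete; what it buys is a self-contained convex-duality argument that does not pass through the dualized form of the statement, at the cost of redoing by hand the closedness bookkeeping that Corollary~\ref{cor:ce} packages for the paper.
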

\begin{proof}
First, we observe that if $p$ and $q$ are faithful states,
then $p\otimes q$ is faithful on $\cS\otimes^{\min}\cT$.
We will prove this for general operator systems.
Let $(p\otimes q)(c)=0$ for some $c\in(\cS\otimes^{\min}\cT)_+$.
Then, the faithfulness of $p$ implies $(\id\times q)(c)=0$ in $\cS$.
Thus, for every c.p.\ map $\p\colon \cS\to\IM_m(\ik)$, one has
$(\id\otimes q)((\p\otimes\id)(c))=\p((\id\times q)(c))=0$.
Since $\id\otimes q$ is also faithful on $\IM_m(\cT)$, this implies
$(\p\otimes\id)(c)=0$, and $c=0$.

By replacing $\cS$ and $\cT$ with their duals,
it suffices to show that the natural
map $\cS^\rd\otimes^{\min}\cT^\rd\to(\cS\otimes^{\max}\cT)^\rd$
is a complete order isomorphism.
For complete positivity, one has to show for any $f\in\IM_n(\cS^\rd\otimes^{\min}\cT^\rd)_+$,
the corresponding map $\tilde{f}\colon\cS\otimes^{\max}\cT\to\IM_n(\ik)$ is c.p.
But this is true, since for every $m,k,l$ and $x\in\IM_{kl,m}(\ik)$, $s\in\IM_k(\cS)_+$,
and $t\in\IM_l(\cT)_+$, one has
\[
\tilde{f}(x^* (s\otimes t) x) = x^* \ip{f,s\otimes t} x \in \IM_{mn}(\ik)_+,
\]
thanks to the completely order isomorphic embeddings
$\cS\subset\cS^{\rd\rd}$ and $\cT\subset\cT^{\rd\rd}$.
It remains to show that if $c\colon(\cS\otimes^{\max}\cT)^\rd\to\IM_n(\ik)$ is c.p.,
then it is c.p.\ on $\cS^\rd\otimes^{\min}\cT^\rd$. But by definition, $c$ is
identified with an element in
$\IM_n((\cS\otimes^{\max}\cT)^{\rd\rd})_+=\IM_n(\cS\otimes^{\max}\cT)_+$ and
$c+\e1=x^*(s\otimes t)x$ for some $x,s,t$.
Hence, for every $f\in \IM_m(\cS^\rd\otimes^{\min}\cT^\rd)_+$, one has
\[
(\id\otimes (c+\e1))(f)=x^*\ip{s\otimes t,f}x \in \IM_{nm}(\ik)_+.
\]
Since $\e>0$ was arbitrary, one has $(\id\otimes c)(f)\in \IM_{nm}(\ik)_+$.
This completes the proof.
\end{proof}
\subsection{Quantum correlation matrices and Tsirelson's Problem}
In this section, we take $\ik=\IC$.
In quantum information theory, a measurement of a state is carried out by
so-called POVMs (positive operator valued measures).
For simplicity of the presentation, we only consider PVMs (projection valued measures).
A PVM with $m$ outputs is an $m$-tuple $(P_i)_{i=1}^m$ of
orthogonal projections on a Hilbert space $\Hil$ such that $\sum P_i=1$.
For each $n\in\IN$, the convex sets of quantum correlation matrices
of two separated systems of $d$ PVMs with $m$-outputs are defined by
\[
\cQ_c^n=\{ \mb{V^*P_i^{(k)} Q_j^{(l)}V}_{\begin{subarray}{c}k,l\\ i,j\end{subarray}} :
\begin{array}{c}
\mbox{$\Hil$ a Hilbert space, $V\colon\ell_2^n\to\Hil$ an isometry}\\
\mbox{$(P_i^{(k)})_{i=1}^m$, $k=1,\ldots,d$, PVMs on $\Hil$,}\\
\mbox{$(Q_j^{(l)})_{j=1}^m$, $l=1,\ldots,d$, PVMs on $\Hil$,}\\
\mbox{$[P_i^{(k)},Q_j^{(l)}]=0$ for all $i,j$ and $k,l$}
\end{array}
\}
\]
(here $[A,B]=AB-BA$ is the commutator) and
\[
\cQ_s^n=\mathrm{closure}\{ \mb{V^*(P_i^{(k)}\otimes Q_j^{(l)})V}_{\begin{subarray}{c}k,l\\ i,j\end{subarray}} :
\begin{array}{c}
\mbox{$\Hil,\Hilk$ Hilbert spaces,}\\
\mbox{$V\colon\ell_2^n\to\Hil\otimes\Hilk$ an isometry,}\\
\mbox{$(P_i^{(k)})_{i=1}^m$, $k=1,\ldots,d$, PVMs on $\Hil$,}\\
\mbox{$(Q_j^{(l)})_{j=1}^m$, $l=1,\ldots,d$, PVMs on $\Hilk$}
\end{array}
\}.
\]
The sets $\cQ_c^n$ and $\cQ_s^n$ are closed convex subsets of $\IM_{md}(\IM_n(\IC)_+)$
such that $\cQ_s^n\subset\cQ_c^n$.
We simply write $\cQ_c$ and $\cQ_s$ when $n=1$.
Whether $\cQ_c=\cQ_s$ for every $m$ and $d$ is the well-known Tsirelson Problem.
We refer the reader to \cite{fritz,jungeetal,tsirelson} for
the background and the literature on this problem.
(The ``real'' problem may be that which of $\cQ_c$ and $\cQ_s$ should we take as
the definition of quantum correlation matrices? See \cite{fritz,act}.)
Since a commuting system on a finite-dimensional Hilbert space splits
as a tensor product if irreducible, one also has
\[
\cQ_s^n=\mathrm{closure}\{ \mb{V^*P_i^{(k)} Q_j^{(l)}V}_{\begin{subarray}{c}k,l\\ i,j\end{subarray}} :
\begin{array}{c}
\mbox{$\dim\Hil<\infty$, $V\colon\ell_2^n\to\Hil$ an isometry}\\
\mbox{$(P_i^{(k)})_{i=1}^m$, $k=1,\ldots,d$, PVMs on $\Hil$,}\\
\mbox{$(Q_j^{(l)})_{j=1}^m$, $l=1,\ldots,d$, PVMs on $\Hil$,}\\
\mbox{$[P_i^{(k)},Q_j^{(l)}]=0$ for all $i,j$ and $k,l$}
\end{array}
\},
\]
which looks more similar to $\cQ_c^n$. As observed in \cite{fritz,jungeetal},
Tsirelson's Problem is a problem about the \spc
\[
\fF=\ell_\infty^m*\cdots*\ell_\infty^m,
\]
the $d$-fold unital free product of $\ell_\infty^m$ (see Example~\ref{exa:freeprod}).
This algebra is also $*$-isomorphic to the group algebra $\IC[\G]$ of $\G=\IZ_m^{*d}$,
the $d$-fold free product of the finite cyclic group $\IZ_m$.
We denote by $(p_i)_{i=1}^m$ the standard basis of minimal projections in $\ell_\infty^m$,
and by $(p_i^{(k)})_{i=1}^m$ the $k$-th copy of it in the free product $\fF$.
We also write $p_i^{(k)}$ for the elements $p_i^{(k)}\otimes 1$ in $\fF\otimes\fF$
and $q_j^{(l)}$ for $1\otimes p_j^{(l)}$.
Now, it is not too hard to see that
\[
\cQ_c^n=\{ \mb{ \p(p_i^{(k)}q_j^{(l)}) }_{\begin{subarray}{c}k,l\\ i,j\end{subarray}} : \p\in S_n(\fF\otimes^{\max}\fF)\}
\]
and
\[
\cQ_s^n=\{ \mb{ \p(p_i^{(k)}q_j^{(l)}) }_{\begin{subarray}{c}k,l\\ i,j\end{subarray}} : \p\in S_n(\fF\otimes^{\min}\fF)\}.
\]
Here we recall that $S_n(\cA)$ denotes the set of u.c.p.\ maps from $\cA$ into $\IM_n(\IC)$.
When $(m,d)=(2,2)$, the group $\IZ_2^{*2}$ is isomorphic to the infinite dihedral group $D_\infty$
and the universal representation
$\langle\left[\begin{smallmatrix} 0 & 1\\ 1 & 0 \end{smallmatrix}\right],
  \left[\begin{smallmatrix} 0 & z\\ z^* & 0 \end{smallmatrix}\right]\rangle\subset\IM_2(C(\IT))$
gives rise to an embedding $\mathrm{C}^* D_\infty\subset\IM_2(C(\IT))$.
In particular, $\G$ is amenable and $\cQ_c^n=\cQ_s^n$ in this case.
Otherwise, $\IZ_m^{*d}$ is a ``nontrivial'' free product group and contains a copy of $\IF_2$.
It is proved in \cite{fritz,jungeetal} that the matricial version of Tsirelson's Problem
(conjectures $(\ref{cond:xmatsi1})$ and $(\ref{cond:xmatsi2})$ in the following theorem) is
equivalent to CEC and KC.
Here we prove that the original Tsirelson Problem is also equivalent to them.

\begin{thm}[cf.\ \cite{fritz,jungeetal}]
The following conjectures are equivalent.
\begin{enumerate}[$(1)$]
\item\label{cond:xcec}
The Connes Embedding Conjecture holds true.
\item\label{cond:xmatsi1}
One has $\cQ_c^n=\cQ_s^n$ for all $m,d,n\in\IN$.
\item\label{cond:xmatsi2}
There are $m,d\geq2$ with $(m,d)\neq(2,2)$ such that $\cQ_c^n=\cQ_s^n$ for all $n$.
\item\label{cond:xtsi}
One has $\cQ_c=\cQ_s$ for all $m,d\in\IN$.
\end{enumerate}
\end{thm}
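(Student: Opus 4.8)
The plan is to run the soft implications $(\ref{cond:xcec})\Rightarrow(\ref{cond:xmatsi1})\Rightarrow(\ref{cond:xmatsi2})$ and $(\ref{cond:xmatsi1})\Rightarrow(\ref{cond:xtsi})$ first, and then to close the loop with the two substantive backward implications $(\ref{cond:xmatsi2})\Rightarrow(\ref{cond:xcec})$ and $(\ref{cond:xtsi})\Rightarrow(\ref{cond:xcec})$. Throughout I write $\G=\IZ_m^{*d}$, so that $\fF\cong\IC[\G]$ and $\mathrm{C}^*_\ru(\fF)=\mathrm{C}^*\G$, and I use the description recorded just before the statement, namely that $\cQ_c^n$ (resp.\ $\cQ_s^n$) is the set of matrices $[\p(p_i^{(k)}q_j^{(l)})]$ with $\p$ ranging over $S_n(\fF\otimes^{\max}\fF)$ (resp.\ $S_n(\fF\otimes^{\min}\fF)$). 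Put $\cS=\lh\{p_i^{(k)}\}$ and $\cT=\lh\{q_j^{(l)}\}$, so that $\lh\{p_i^{(k)}q_j^{(l)}\}=\cS\otimes\cT$ is a semi-operator system sitting inside both $\fF\otimes^{\max}\fF$ and $\fF\otimes^{\min}\fF$.

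\textbf{The easy half.} For $(\ref{cond:xcec})\Rightarrow(\ref{cond:xmatsi1})$ I would argue that CEC implies Kirchberg's Conjecture, and that KC forces $\mathrm{C}^*\G\otimes_{\max}\mathrm{C}^*\G=\mathrm{C}^*\G\otimes_{\min}\mathrm{C}^*\G$ for every $\G=\IZ_m^{*d}$: when $\G$ is amenable (the degenerate cases $m\le1$ or $d\le1$, and $(m,d)=(2,2)$) this is automatic by nuclearity, while otherwise $\G$ is a nontrivial free product of amenable groups containing $\IF_2$, to which the remark closing Section~\ref{sec:kirchberg} applies. Once the two $\mathrm{C}^*$-tensor products agree as completions, the universal property gives $S_n(\fF\otimes^{\max}\fF)=S_n(\fF\otimes^{\min}\fF)$, whence $\cQ_c^n=\cQ_s^n$ for all $m,d,n$. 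The implications $(\ref{cond:xmatsi1})\Rightarrow(\ref{cond:xmatsi2})$ and $(\ref{cond:xmatsi1})\Rightarrow(\ref{cond:xtsi})$ are immediate specializations.

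\textbf{The matricial converse.} The heart of $(\ref{cond:xmatsi2})\Rightarrow(\ref{cond:xcec})$ is that $\cS\otimes\cT$ already contains enough unitaries to generate $\fF\otimes\fF=\IC[\G\times\G]$: since each $p_i^{(k)}$ is a linear combination of $1,s_k,\dots,s_k^{m-1}$, the system $\cS\otimes\cT$ contains the unitaries $s_k\otimes1$ and $1\otimes t_l$ together with their inverses, and these generate $\G\times\G$. By Arveson's Extension Theorem~\ref{thm:arveson}, $\cQ_c^n$ (resp.\ $\cQ_s^n$) is precisely the matricial state space of $\cS\otimes\cT$ equipped with the cones induced from $\fF\otimes^{\max}\fF$ (resp.\ $\fF\otimes^{\min}\fF$). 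Thus $\cQ_c^n=\cQ_s^n$ at every level forces these two matricial cone systems on $\cS\otimes\cT$ to have the same archimedean closures. Feeding this into Corollary~\ref{cor:pisier}, applied once to $(\cS\otimes\cT,\fF\otimes^{\max}\fF)$ and once to $(\cS\otimes\cT,\fF\otimes^{\min}\fF)$, I would conclude $\arch((\fF\otimes^{\max}\fF)_+)=\arch((\fF\otimes^{\min}\fF)_+)$; as the minimal cone is already archimedean closed, Theorem~\ref{thm:fundamental} yields $\mathrm{C}^*\G\otimes_{\max}\mathrm{C}^*\G=\mathrm{C}^*\G\otimes_{\min}\mathrm{C}^*\G$, i.e.\ KC for $\G$. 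Since $(m,d)\neq(2,2)$ with $m,d\geq2$ makes $\G$ a nontrivial free product of amenable groups containing $\IF_2$, the equivalence of Section~\ref{sec:kirchberg} delivers CEC.

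\textbf{The main obstacle.} The genuinely hard step is $(\ref{cond:xtsi})\Rightarrow(\ref{cond:xcec})$, where the hypothesis $\cQ_c=\cQ_s$ is only the scalar ($n=1$) equality; the previous argument breaks down because Corollary~\ref{cor:pisier} consumes the full matricial cone system of $\cS\otimes\cT$, whereas level-one equality of states matches only the order, not the complete order, structure. The plan is to recover the missing matricial information by letting the parameters vary, absorbing the amplification by $\IM_n(\cdot)$ into enlarged correlation data so that the level-$n$ positivity problem for $(m,d)$ is re-expressed as a level-one problem for suitable $(m',d')$. The natural engine is the finite-dimensional operator system duality of Theorem~\ref{thm:fp}, which converts complete positivity of a map $\cS\otimes^{\min}\cT\to\IM_n$ into scalar positivity on a minimal tensor product of the dual operator systems; the crux, which I expect to be delicate, is to realize $\IM_n\otimes\cS$ (or the dual $\cS^\rd$) again as a correlation operator system $\cS_{m',d'}\otimes\cS_{m',d'}$ of larger parameters, so that $(\ref{cond:xtsi})$ for all $(m',d')$ upgrades to $(\ref{cond:xmatsi1})$.
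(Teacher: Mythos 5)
Your treatment of $(\ref{cond:xcec})\Rightarrow(\ref{cond:xmatsi1})\Rightarrow(\ref{cond:xmatsi2})\,\&\,(\ref{cond:xtsi})$ and of $(\ref{cond:xmatsi2})\Rightarrow(\ref{cond:xcec})$ is correct and essentially the paper's argument: the paper likewise reduces $(\ref{cond:xmatsi2})$ to the coincidence of the $\max$- and $\min$-induced operator system structures on $\cS_{m,d}\otimes\cS_{m,d}$ (it cites Corollary~\ref{cor:ce} for passing from matricial state spaces back to matricial cones, where you invoke Arveson plus duality) and then applies Corollary~\ref{cor:pisier} to get KC. The problem is $(\ref{cond:xtsi})\Rightarrow(\ref{cond:xcec})$, which you correctly identify as the main obstacle but do not actually prove: your plan is to upgrade the scalar equality $\cQ_c=\cQ_s$ to the matricial one by realizing the $\IM_n$-amplification of $\cS_{m,d}\otimes\cS_{m,d}$ (or its dual) as a correlation system $\cS_{m',d'}\otimes\cS_{m',d'}$ for larger parameters, and you explicitly leave this ``delicate'' crux unexecuted. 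That is a genuine gap, and it is doubtful the step can be carried out as stated: $\IM_n(\cS_{m,d})$ is not an amalgam of abelian blocks over the unit, and there is no evident completely order isomorphic embedding of $\IM_n(\cS_{m,d}\otimes^{\min}\cS_{m,d})$ into some $\cS_{m',d'}\otimes^{\min}\cS_{m',d'}$ that would transport level-$n$ positivity to level-one positivity.

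The paper closes this implication by a genuinely different route that never upgrades to matrix levels. The key is Theorem~\ref{thm:cecmin}: CEC for $\pi_\tau(A)''$ is equivalent to mere \emph{contractivity} (not complete contractivity) of the linear functional $s_i\otimes s_j\mapsto\tau(u_iu_j^*)$ on $\lh\{s_i\otimes s_j\}\subset\mathrm{C}^*\IF_\infty\otimes_{\min}\mathrm{C}^*\IF_\infty$, which is a level-one condition and is therefore exactly what scalar state information can reach. Given $(M,\tau)$ and unitaries $u_i$, one approximates each $u_i$ by a unitary $u_i(m)$ of order $m$ with $\|u_i-u_i(m)\|\le\pi/m$; the functional $s_i\otimes s_j\mapsto\tau(u_i(m)u_j(m)^*)$ is then a vector state of $\pi_\tau\times\pi_\tau^{\op}$, hence comes from a state on $\fF\otimes^{\max}\fF\cong\IC[\IZ_m^{*d}\times\IZ_m^{*d}]$, and $\cQ_c=\cQ_s$ for the pair $(m,d)$ says precisely that its restriction to $\cS_{m,d}\otimes\cS_{m,d}$ is also attained by a state on the minimal tensor product, i.e.\ is contractive there; pulling back along $\IF_d\twoheadrightarrow\IZ_m^{*d}$ and letting $m,d\to\infty$ yields the hypothesis of Theorem~\ref{thm:cecmin}. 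If you want to salvage your write-up, replace your final paragraph by this argument; the duality machinery of Theorem~\ref{thm:fp} is not needed for this implication.
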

\begin{proof}
The implication $(\ref{cond:xcec})\Rightarrow(\ref{cond:xmatsi1})$
follows from the fact that KC is equivalent to
that $\fF\otimes^{\max}\fF=\fF\otimes^{\min}\fF$ (see Section~\ref{sec:kirchberg}).
That $(\ref{cond:xmatsi1})\Rightarrow(\ref{cond:xmatsi2})\,\&\,(\ref{cond:xtsi})$ is obvious.
Now, if (\ref{cond:xmatsi2}) holds true, then the operator system structures on
\[
\lh\{p_i^{(k)}q_j^{(l)} : i,j,k,l\}
 = (\ell_\infty^m+\cdots+\ell_\infty^m)\otimes(\ell_\infty^m+\cdots+\ell_\infty^m)
 \subset \fF\otimes\fF
\]
induced from $\max$ and $\min$ tensor products coincide by Corollary~\ref{cor:ce}.
This implies KC by Corollary~\ref{cor:pisier}.
This proves $(\ref{cond:xmatsi2})\Rightarrow(\ref{cond:xcec})$.
Finally, it remains to show $(\ref{cond:xtsi})\Rightarrow(\ref{cond:xcec})$.
Let a finite von Neumann algebra $(M,\tau)$ and a sequence $u_1,u_2,\ldots$
of unitary elements be given.
By spectral theorem, for every $m$, there are unitary elements $u_i(m)$ of
order $m$ such that $\| u_i - u_i(m)\|\le\pi/m$.
Recall that $\fF\cong \IC[\IZ_m^{*d}]$
via $\sum_{i=1}^m \omega_m^{i}p_i^{(k)}\leftrightarrow s_k$, where
$\omega_m$ is an $n$-th primitive root of unity and $s_k$ is the generator
of the $k$-th copy of $\IZ_m$ in $\IZ_m^{*d}$.
Then, condition $(\ref{cond:xtsi})$ for $m,d$ implies that the map
\[
s_i\otimes s_j\mapsto \tau(u_i(m)u_j(m)^*)=\ip{\pi_\tau(u_i(m))\pi_\tau^{\op}((u_j(m)^*)^{\op})\hat{1},\hat{1}}
\]
is contractive on
$\lh\{ s_i\otimes s_j : i,j\le d\}\subset \mathrm{C}^*\IZ_m^{*d} \otimes_{\min} \mathrm{C}^*\IZ_m^{*d}$.
It is also contractive on $\mathrm{C}^*\IF_d \otimes_{\min} \mathrm{C}^*\IF_d$ through
the natural surjection $\IF_d\to\IZ_m^{*d}$.
Taking limit in $m$ and $d$, one verifies that
the condition in Theorem~\ref{thm:cecmin} holds.
It follows that $(M,\tau)$ satisfies CEC.
(Playing around with the fact that every element in a $\mathrm{II}_1$-factor is
a linear combination of projections with control on the number of terms and coefficients
(\cite{fh}), one can weaken condition $(\ref{cond:xtsi})$ further to the extent
that $\cQ_c=\cQ_s$ for a fixed $m\geq2$ and all $d\in\IN$.)
\end{proof}
\subsection{Quantum correlation matrices and semidefinite program}
We continue the study of Tsirelson's Problem whether $\cQ_c=\cQ_s$.
We will see that $\cQ_s\subset\cQ_c$ are approximated from above and from
below by rather explicit semi-algebraic sets. Recall that a subset of $\IR^N$
is said to be \emph{semi-algebraic} if it can be defined by finitely many
polynomial equations and inequalities.
It is very likely that $\cQ_c$ and $\cQ_s$ themselves are not semi-algebraic
(cf.\ Problem 2.10 in \cite{tsirelson}),
except the case $(m,d)=(2,2)$ where $\cQ_c=\cQ_s$ is semi-algebraic (since
the irreducible representations of $\IZ_2^{*2}$ are at most two dimensional).
For the description of $\cQ_c$, it is probably neater to use the isomorphism
$\ell_\infty^m\cong\IC[\IZ_m]$, given by the Fourier transform
$p_i\leftrightarrow \frac{1}{m}\sum_{v=1}^m\omega_m^{-iv}s^v$, where
$\omega_m$ is an $n$-th primitive root of unity and $s$ is the generator of $\IZ_m$.
Now, $\fF\otimes^{\max}\fF$ is identified with $\IC[\IZ_m^{*d}\times\IZ_m^{*d}]$.
We write $s_k$ and $t_l$ for the generator of $\IZ_m^{(k)}\times\{1\}$ and $\{1\}\times\IZ_m^{(l)}$
in $\IZ_m^{*d}\times\IZ_m^{*d}$.
By Corollary~\ref{cor:posdeffunct}, there is a one-to-one correspondence between
a positive type function $f$ on $\IZ_m^{*d}\times\IZ_m^{*d}$ and
a state $\p_f$ on $\IC[\IZ_m^{*d}\times\IZ_m^{*d}]$, given by
$\p_f(g)=\sum_t f(t)g(t)$.
Let $E_n$ be an increasing and exhausting sequence
of finite subsets of $\IZ_m^{*d}\times\IZ_m^{*d}$.
We say a function $f\colon E_n^{-1}E_n\to\IC$ is positive type on $E_n$
if $[f(s^{-1}t)]_{s,t\in E_n}$ is positive semidefinite in $\IM_{E_n}(\IC)$.
Then by compactness, one has
\begin{align*}
\cQ_c &= \bigcap_n \{ \mb{ \p_f(p_i^{(k)} q_j^{(l)})
 }_{\begin{subarray}{c}k,l\\ i,j\end{subarray}} : f \mbox{ positive type on $E_n$}\}\\
&= \bigcap_n \{ \mb{ \frac{1}{m^2}\sum_{v,w=1}^m \omega_m^{-iv-jw} f(s_k^v t_l^w)
 }_{\begin{subarray}{c}k,l\\ i,j\end{subarray}} : f \mbox{ positive type on $E_n$}\}.
\end{align*}
See \cite{fritz} for more information.
Next, we deal with $\cQ_s$. Although the description
\[
\cQ_s=\mathop{\mathrm{closure}}\left( \bigcup_n
\{ \mb{\ip{(P_i^{(k)}\otimes Q_j^{(l)})\xi,\xi}}_{\begin{subarray}{c}k,l\\ i,j\end{subarray}} :
\begin{array}{c}
\mbox{$\xi\in\ell_2^n\otimes\ell_2^n$ a unit vector,}\\
\mbox{$(P_i^{(k)})_{i=1}^m$ PVMs on $\ell_2^n$,}\\
\mbox{$(Q_j^{(l)})_{j=1}^m$ PVMs on $\ell_2^n$}
\end{array}
\}\right)
\]
is already quite good, we will do some calculations around operator system tensor products.
See \cite{fkpt} (and also \cite{fp,kavruk,kptt}) for more information on this subject.
Let
\[
\cS_{m,d}=\lh\{ p_i^{(k)} : i=1,\ldots,m,\,k=1,\ldots,d\}=\ell_\infty^m+\cdots+\ell_\infty^m\subset\fF
\]
be the finite-dimensional operator system.
Then, $\cQ_s$ coincides with the space of states
on $\cS_{m,d}\otimes^{\min}\cS_{m,d}$, evaluated at
$\{ p^{(k)}_i\otimes q^{(l)}_j \}$.
By Lemma~\ref{lem:kavruk}, the operator system structure of $\cS_{m,d}$
is defined as follows: $a\in\IM_n(\cS_{m,d})$ is positive if and only if
there are $a_1,\ldots,a_d\in\IM_n(\ell_\infty^m)_+$ such that $a=a_1+\cdots+a_m$.
This implies that a map $\p\colon\cS_{m,d}\to\IM_n(\IC)$ is c.p.\ if and only if
the restriction of $\p$ to each summand $\ell_\infty^m$ is c.p.
Meanwhile, a map $\psi\colon\ell_\infty^m\to\IM_n(\IC)$ is c.p.\ if and only if
$\psi(p_i)\geq0$ for every $i=1,\ldots,m$.
It follows that the dual operator system $\cS_{m,d}^\rd$ of $\cS_{m,d}$ is naturally
identified with
\[
\cS_{m,d}^\rd=\{ (f_1,\ldots,f_d) \in \ell_\infty^m\oplus\cdots\oplus\ell_\infty^m :
 \sum_i f_k(i)=\sum_j f_l(j)\} \subset \ell_\infty^{dm}.
\]
By Theorem~\ref{thm:fp}, one has
$(\cS_{m,d}\otimes^{\min}\cS_{m,d})^\rd=\cS_{m,d}^\rd\otimes^{\max}\cS_{m,d}^\rd$.
Hence, if $c\in (\cS_{m,d}\otimes^{\min}\cS_{m,d})^\rd$ is strictly positive,
i.e.\ $c\geq\e1$ for some $\e>0$,
then it has a representation $x^*(s\otimes t)x$ for some $x\in\IM_{vw,1}(\IC)$,
$s\in\IM_v(\cS_{m,d}^\rd)_+$, and $t\in\IM_w(\cS_{m,d}^\rd)_+$.
Viewing $x$ as a vector $\xi$ in $\ell_2^v\otimes\ell_2^w$, one has
\[
x^*(s\otimes t)x = \ip{(s\otimes t) \xi,\xi} \in \cS_{m,d}^\rd\otimes\cS_{m,d}^\rd.
\]
Evaluating it at the basis $\{ p^{(k)}_i\otimes q^{(l)}_j \}$, one obtains
\[
x^*(s\otimes t)x = \mb{ \ip{(s^{(k)}_i\otimes t^{(l)}_j)\xi,\xi}}_{\begin{subarray}{c}k,l\\ i,j\end{subarray}},
\]
where $s^{(k)}_i\in\IM_v(\IC)_+$ and $t^{(l)}_j\in\IM_w(\IC)_+$ satisfy $\sum_i s^{(k)}_i = a$ (independent of $k$)
and $\sum_j t^{(l)}_j=b$ (independent of $l$).
Replacing $\xi$ with $(a^{1/2}\otimes b^{1/2})\xi$, one may assume that $a=1$ and $b=1$.
By dilating them, one may further assume that $(s^{(k)}_i)_{i=1}^m$ and $(t^{(l)}_j)_{j=1}^m$
are PVMs.
In this way, we come back to the original definition of $\cQ_s$.
We note that we have proved that one can realize a ``generic" element $[\gamma_{i,j}^{k,l}] \in \cQ_s$
by a finite-dimensional system, namely if it is faithful on $\cS_{m,d}\otimes^{\min}\cS_{m,d}$.
It seems unlikely that this is also the case for all $[\gamma_{i,j}^{k,l}] \in \cQ_s$.

\end{document}